\newcommand{\tarc}{\mbox{\large$\frown$}}
\newcommand{\arc}[1]{\stackrel{\tarc}{#1}}
\newtheorem{Th}{Theorem}[section]
\newtheorem{Cor}[Th]{Corollary}
\newtheorem{Lem}[Th]{Lemma}
\theoremstyle{definition}
\newtheorem{Def}[Th]{Definition}
\newtheorem{Not}[Th]{Note}
\newtheorem{Rem}[Th]{Remark}
\newtheorem{Ex}[Th]{Example}
\newtheorem{Nota}[Th]{Notation}
\theoremstyle{plain}
\let\ta\tau 
\renewcommand{\ta}{\scalebox{1.44}{$\tau$}}
\newcommand{\R}{\mathbb R}
\newcommand{\N}{\mathbb N}
\newcommand{\C}{\mathbb C}
\newcommand{\K}{\mathbb K}
\newcommand{\M}{\mathscr M}
\newcommand{\B}{\mathscr B}
\newcommand{\U}{\mathscr U}
\newcommand{\pt}[2]{\mathcal P_{#2}#1}
\begin{document}

\title{Absolute Continuity of Function on Topological Space using Measure}
\author{Dhruba Prakash Biswas\footnote{Department of Pure Mathematics, University of Calcutta, 35 Ballygunge 
		Circular Road, Kolkata-700019, INDIA, 
		e-mail : dhrubaprakash28@gmail.com} and Sandip Jana\footnote{Department of Pure Mathematics, University of Calcutta, 35, Ballygunge Circular Road, Kolkata-700019, INDIA, e-mail : sjpm@caluniv.ac.in}} 

\date{}
\maketitle

\begin{abstract}
The prime objective of this paper is to develop the notion of absolute continuity of functions on a more general setting outside $\R$. For this we have considered a topological space which is a measure space as well. We have built axioms for making the $\sigma$- algebra and measure compatible with the topology  of the space. These spaces are termed as \textit{topological measure space} (in short \textit{tms}). $\R^n$ with usual topology, Lebesgue $\sigma$-algebra and Lebesgue measure is a relevant example of tms. Further, we have presented a new tms structure on second countable metric spaces with the development of a new measure. This construction is motivated by \textbf{Carath\'eodory}'s Theorem. In this new tms framework, we have accomplished exploring ample collection of absolutely continuous functions not only on $\R^n(n\geq 2)$ but also on any seperable normed linear space. Also, we have described several analytical aspects carrying the intrinsic sense of absolute continuity on tms framing. Besides, the collection of all absolutely continuous functions on tms forms a vector space over $\K$, the field of real or complex numbers and with additional boundedness property, they form ring and algebra over $\K$. Thereafter, we have introduced the concept of \textit{locally Lipschitcz function} on tms involving the measurement of open connected sets. A relation between absolute continuity and locally Lipschitz has been developed. We have proved that absolute continuity and boundedness of linear functionals are same on separable normed linear spaces with the association of that new measure. Further, we have extended the co-domain of absolutely continuous functions upto normed linear spaces which helps us to characterise absolute continuity of linear maps in terms of boundedness when the domain is a seperable normed linear space incorporated with that new measure.
\end{abstract}

AMS Classification 2020 : 26A46, 26B30, 28A12, 54B99, 46B99.

Key words :  Topological measure space, absolutely continuous function, locally Lipschitz function.

\section{Introduction}

 The concept of absolute continuity of functions weighs a stringent importance in real analysis for its elegant properties and versatile applications. It measures the total oscillation of a function (defined on a compact interval) by means of inspecting the functional behaviour over all finite families of disjoint open subintervals of arbitrarily small length. Each subinterval possesses another topological property i.e. connectedness. Accordingly, if we reckon all countable collections each consisting of disjoint open connected fragments of a topological space, can the notion of absolute continuity of functions be generalised ? In \cite{Maly}, Jan Mal\'{y} generalised absolute continuity of functions defined on some open subset of $ \R^n $. In this paper we seek for solutions of the above question and significantly, the answer is affirmative. However, at first a compatiable $\sigma$-algebra and a suitable measure is required to be ensued on the topological space for measurement of sets.
 
  In section 2, we introduce a new topological structure which is a measure space as well; the topology being compatible with its $\sigma$-algebra and existent measure in a reasonable way. This space is named as \textit{topological measure space} (in short \textit{tms}). The $\sigma$-algebra of tms is a refinement of collection of all open sets. As expected, each tms is locally connected due to adequate existence of open connected sets. Further, measurability of those sets turns this space into uniformisable one. $\R^n$ with usual topology, Lebesgue $\sigma$-algebra and Lebesgue measure is a salient example of tms to be contemplated. Unfortunately, the notion of absolute continuity is not satisfactory here because of the presence of zero measurable connected arcs. Even simple curves fail to be tms. The construction of Lebesgue $\sigma$-algebra on $\R^n\,(n\geq 2$) deteres even projection maps and certain trigonometric functions from being absolutely continuous. As a consequence, we require an alternative measure except from Lebesgue measure for better treatment of class of absolutely continuous functions on $ \R^n\,(n\geq 2$). 
  
  \textbf{Carath\'eodory}'s Theorem renders a significant part in constructing tms structure on second countable metric spaces with the help of a new measure. This is discussed in section 3. With respect to this measure, all the rectifiable curves on $\R^2$ constitute a rich class of tms. Existence of ample collection of non-trivial tms envisages relevance and importance of discussion on this structure. This newly manufactured tms framework helps us identify enriched collection of absolutely continuous functions and investigate several analytical aspects conveying the intrinsic sense of absolute continuity on several seperable normed linear spaces. 
  
   The collection of all absolutely continuous functions on tms forms a vector space over $\K$, the field of real or complex numbers and with the additional boundedness property, they form  ring and algebra over $\K$. In the last section, we have developed the concept of \textit{locally Lipschitz function} on tms in terms of measurable open connected sets. A relationship between absolute continuity and locally Lipschitz has been established and this leaves an immediate effect on the residual part of our paper. With the consideration of the aforesaid new measure on seperable normed linear spaces, a cogent interconnection between boundedness and absolute continuity of linear functionals has been built. To be more precise, absolute continuity and boundedness of linear functionals are same on separable normed linear spaces with the association of that referred measure. Similar kind of results hold when the co-domain of absolutely continuous functions is extended upto normed linear spaces. The concepts regarding absolute continuity and boundedness furnish identical classes of linear transformations on any seperable normed linear space incorporated with that new measure. 
 
\section{Topological measure space (tms)}

\begin{Def} Let ($X, \ta $) be a topological space and ($X, \M, m$) be a measure space. Then ($X, \ta, \M, m$) is called a \emph{topological measure space} (in short, \textit{tms}) if the following conditions are satisfied: \\
		 (i) $\M$ contains all open sets of $(X,\ta)$.\\
		 (ii) For every $x \in X$ and every $\varepsilon>0,$ there exists an open connected nbd $U$ of $x$ such that $m(U) <\varepsilon$.\\
		 (iii) For every G $\in \ta$ and for any $x \in G,$ there exists $\varepsilon>0$ such that for all open connected nbd $U$ of $x$ with $m(U)<\varepsilon,$ we have $U \subseteq G$.
	\label{d:tms}
\end{Def}

\begin{Rem} From above Definition \ref{d:tms}, using axioms (iii) and (ii) it follows that every tms is locally connected.
\label{rm:lcon}\end{Rem}

\begin{Ex}	
		(i)  Let $ X =\R$, $\ta_u$ be the usual topology on $\R$, $\mathscr{L}(\R) $ be the Lebesgue $\sigma$-algebra on $\R$, $\lambda$ be the	Lebesgue measure. Lebesgue $\sigma $-algebra on $\R$ contains all open subsets of $\R$. Let $x\in \R$ be arbitrary.  Then for every $\varepsilon> 0,\exists\,$ a basic open connected nbd $V =(x-\frac{\varepsilon}{3}, x + \frac{\varepsilon}{3})$ of $x$ such that $\lambda(V) = \frac{2\varepsilon}{
			3}< \varepsilon.$ Let $G\in\ta_u$  and $x \in G.$ Then there exists $a,b \in \R$ such that $x \in (a,b)\subseteq G$. Choose $0<\varepsilon<\min\{x-a,b-x\}$. Then for every open connected nbd $U$ of $x$ with measure $< \varepsilon, $ we have $U\subseteq (a, b)$ and consequently, $U\subseteq G$. Hence, ($\R, \ta_u,  \mathscr{L}(\R), \lambda$) is a tms.
		
		(ii) ($\R^{n},\ta_u,\mathscr{L}(\R^{n}),\lambda$) is a tms for any $ n\in\N $. Here $ \mathscr{L}(\R^{n})$ is the Lebesgue $ \sigma $-algebra on $ \R^n $ and $\lambda $ denotes the Lebesgue measure on $ \mathscr{L}(\R^{n})$.
		
		(iii) Let $X=\R$, $\ta_{d}$ be the discrete topology on $\R$, $\mathscr A(\ta_d)$ be the smallest $\sigma$-algebra containing all open sets of $(\R,\ta_{d})$. Evidently, $\mathscr A(\ta_d)=\mathscr{P}(\R)$ [the power set of $ \R $]. Let $\lambda_c$ be the counting measure. Then ($\R,\ta_{d},\mathscr{P}(\R),\lambda_c$) is not a tms for the following reason:\\
		Let $a\in\R$ be arbitrary. Then $\{a\}\in\mathscr{P}(\R)$ and $\lambda_c(\{a\}) = 1$. For $\varepsilon=\frac{1}{2}$, there does not exist any open set $V\in\ta_{d}$ containing $ a $ such that $\lambda_c(V)<\frac{1}{2}$.
		
		(iv) Let $X=\R$, $\ta_{d}$ be the discrete topology on $\R$, $\mathscr{L}(\R)$ be the Lebesgue $\sigma$-algebra on $\R$, $\lambda$ be the Lebesgue measure. Then ($\R,\ta_{d},\mathscr{L}(\R),\lambda$) is not a tms, since there are non-measurable open sets in $ (\R,\ta_d) $. So axiom (i) of Definition \ref{d:tms} is violated.
		
		 (v) Let $X:=\{z\in\C :| z|=1\}$, $\ta $ be the subspace topology of $\C$ on $X$, $\mathscr{L}(X)$ be the Lebesgue $\sigma$-algebra on $X$, $\lambda$ be the Lebesgue measure. Then ($X,\ta,\mathscr{L}(X),\lambda$) is not a tms. In fact, if $V:=\{z\in\C:z=e^{i\theta},0<\theta<\frac{\pi}{2}\}$ then $V $  is open in $ X $ and $\lambda(V) = 0$ (since $\lambda(X) = 0$, because $X$ is just an arc in $\C$). Now, if we choose $W:=\{z\in\C:z=e^{i\theta},0<\theta<\pi\}$, then $ W $ is open connected in $ X $ and for every $\varepsilon>0,$ we have $\lambda(W) = 0 <\varepsilon$, but $W\not\subseteq V$. Hence ($X,\ta,\mathscr{L}(X),\lambda$) does not satisfy axiom (iii) of Definition \ref{d:tms} .	
		
			(vi) ($\R,\ta_{l},\B(\R),\lambda$) is not a topological measure space \big[here $\ta_{l} $ is the lower limit topology on $\R$, $\B(\R)$ denotes the Borel $\sigma$-algebra on $ (\R,\ta_l) $, $\lambda$ denotes the Lebesgue measure on $ \R $\big] because singletons are the only connected sets and these sets cannot be open. So axiom (ii) of Definition \ref{d:tms} is not satisfied.
		
		(vii) Let $X=\R$, $\ta_f$ be the cofinite topology on $\R$, $\mathscr{A}(\ta_f)$ be the $\sigma$-algebra generated by all open subsets of ($\R,\ta_f$) and $\lambda$ be the Lebesgue measure. Then ($\R,\ta_f,\mathscr{A}(\ta_f),\lambda$) is not a tms, since for any non-empty open set $U$, we have $\lambda(U)$ is infinite, i.e. ($\R,\ta_f,\mathscr{A}(\ta_f),\lambda$) violates axiom (ii) and (iii) of Definition \ref{d:tms}. 
		
		For similar reason, ($\R,\ta_{co},\mathscr{A}(\ta_{co}),\lambda$) is not a tms where $\ta_{co}$ is the co-countable topology on $\R$.	
	\label{e:tms}
\end{Ex}
\begin{Not}
	Example (iii) of \ref{e:tms} shows that, any topological space with counting measure $ \lambda_c $ cannot be a topological measure space, since for any $\varepsilon<1,\not\exists$ any non empty set $E$ such that $\lambda_c(E)<\varepsilon$.
\end{Not}
\begin{Rem}
	From example \ref{e:tms} (v), we can conclude that any simple curve on $\R^n\,(n\geq 2)$ is not a tms with respect to Lebesgue measure.
\end{Rem}
\begin{Lem}
	Let $(X,\ta)$ be a topological space and $Y$ be a subspace of $X$. Then every connected subset of $(Y,\ta_Y)$ is also connected in $(X,\ta)$, where $\ta_Y$ is the subspace topology of $\ta$ onY. \label{l:con}
\end{Lem}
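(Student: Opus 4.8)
The plan is to reduce the assertion to the transitivity of the subspace-topology construction, while presenting a self-contained separation argument so the proof does not rely on an unstated fact. Fix a set $A\subseteq Y$ that is connected in $(Y,\ta_Y)$; we must show that $A$ is connected when regarded as a subspace of $(X,\ta)$.

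First I would record the elementary observation that the topology $A$ inherits from $(Y,\ta_Y)$ and the topology $A$ inherits directly from $(X,\ta)$ coincide: a relatively open subset of $A$ in the first sense has the form $A\cap(G\cap Y)$ for some $G\in\ta$, while a relatively open subset in the second sense has the form $A\cap G$ for some $G\in\ta$, and since $A\subseteq Y$ we have $A\cap(G\cap Y)=A\cap G$, so the two families are equal. Hence ``$A$ connected in $Y$'' and ``$A$ connected in $X$'' refer to connectedness of literally the same topological space, which gives the claim at once.

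To make this explicit without invoking that identification, I would argue by contraposition. Suppose $A$ is disconnected in $(X,\ta)$: there exist $G_1,G_2\in\ta$ with $A\subseteq G_1\cup G_2$, $A\cap G_1\neq\emptyset$, $A\cap G_2\neq\emptyset$, and $A\cap G_1\cap G_2=\emptyset$. Setting $H_i:=G_i\cap Y\in\ta_Y$ for $i=1,2$ and using $A\subseteq Y$ to get $A\cap H_i=A\cap G_i$, one sees that $A\subseteq H_1\cup H_2$, that $A\cap H_1$ and $A\cap H_2$ are both nonempty, and that $A\cap H_1\cap H_2=\emptyset$; thus $\{H_1,H_2\}$ disconnects $A$ in $(Y,\ta_Y)$, contradicting the hypothesis.

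I do not expect a genuine obstacle here; the only point requiring care is bookkeeping — tracking which ambient space each open set belongs to and invoking the inclusion $A\subseteq Y$ at exactly the steps where one passes between open sets of $X$ and open sets of $Y$. The lemma is isolated in this clean form because it will be used repeatedly in the sequel to transport open connected neighbourhoods (and hence the tms axioms) between a space and its subspaces.
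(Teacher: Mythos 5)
Your proposal is correct and follows essentially the same route as the paper: assume a disconnection of $A$ in $(X,\ta)$ and restrict the separating open sets to $Y$ to contradict connectedness in $(Y,\ta_Y)$. In fact your write-up is the more careful of the two (the paper's version loosely writes the disconnection as $U\cup V=E$ with $U,V$ open in $X$), and your opening observation that the two induced topologies on $A$ coincide already settles the matter; nothing further is needed.
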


	\begin{proof}
	Let $E\subseteq Y$ be a connected set in $Y$. If possible, let $ E $ be disconnected in $ X $. Then there are disjoint open sets $ U,V $ in $ X $ such that $ U\cup V=E $. So $ U,V $ being subsets of $ Y $, form a disconnection of $ E $ in $ Y $. This contradicts that $ E $ is connected in $ Y $.
\end{proof}
	
\begin{Th}
	Let $(X,\ta,\M,m)$ be a tms and $Y$ be an open subspace of $X$. Then $Y$ is a tms.\label{t:opentms}
\end{Th}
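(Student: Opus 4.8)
The plan is to verify the four requirements in the definition of a tms for the tuple $(Y,\ta_Y,\M_Y,m_Y)$, where $\ta_Y$ is the subspace topology, $\M_Y:=\{A\cap Y : A\in\M\}=\{A\in\M : A\subseteq Y\}$ (using that $Y$ is open, hence measurable), and $m_Y$ is the restriction of $m$ to $\M_Y$. First I would note that $(Y,\M_Y,m_Y)$ is a genuine measure space — this is the standard restriction of a measure to a measurable subset — and that, since $Y$ is open in $X$, every set open in $\ta_Y$ is open in $\ta$ and hence lies in $\M$; being a subset of $Y$ it lies in $\M_Y$. This gives axiom (i).

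Next, for axiom (ii): fix $x\in Y$ and $\varepsilon>0$. By axiom (ii) for $X$ there is an open connected nbd $U$ of $x$ in $X$ with $m(U)<\varepsilon$. The obstacle is that $U$ need not sit inside $Y$. This is exactly where axiom (iii) for $X$ earns its keep: since $Y\in\ta$ and $x\in Y$, there is $\varepsilon_0>0$ such that every open connected nbd of $x$ with measure $<\varepsilon_0$ is contained in $Y$. So I would instead apply axiom (ii) of $X$ with the threshold $\min\{\varepsilon,\varepsilon_0\}$ to get an open connected nbd $U$ of $x$ with $m(U)<\min\{\varepsilon,\varepsilon_0\}$; then $U\subseteq Y$, so $U$ is a nbd of $x$ in $Y$, it is connected in $Y$ (connectedness is intrinsic — an open connected subset of $X$ contained in $Y$ is connected in the subspace, which also follows from the contrapositive direction of Lemma \ref{l:con}), it is open in $\ta_Y$, and $m_Y(U)=m(U)<\varepsilon$. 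This settles axiom (ii).

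For axiom (iii): let $G\in\ta_Y$ and $x\in G$. Since $Y$ is open in $X$, $G$ is open in $X$, so by axiom (iii) for $X$ there is $\varepsilon_1>0$ such that every open connected nbd $U$ of $x$ in $X$ with $m(U)<\varepsilon_1$ satisfies $U\subseteq G$. Combining with the $\varepsilon_0$ above (ensuring such $U$ lands inside $Y$ so that it is genuinely a nbd in the subspace), take $\varepsilon_2:=\min\{\varepsilon_0,\varepsilon_1\}$: any open connected nbd $U$ of $x$ in $(Y,\ta_Y)$ with $m_Y(U)<\varepsilon_2$ is, by Lemma \ref{l:con}, connected in $X$, is open in $X$ (as $Y$ is open), has $m(U)<\varepsilon_1$, and hence $U\subseteq G$. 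This gives axiom (iii) and completes the proof. The only genuinely delicate point is the repeated use of axiom (iii) of $X$ to guarantee that the open connected neighbourhoods produced in $X$ actually lie in $Y$; everything else is bookkeeping about restriction of measures and the fact, recorded in Lemma \ref{l:con} and its easy converse for open subspaces, that connectedness is unaffected by passing between $X$ and an open subspace $Y$.
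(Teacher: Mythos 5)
Your proof is correct, and its overall shape --- verifying axioms (i)--(iii) of Definition \ref{d:tms} for $(Y,\ta_Y,\M_Y,m_Y)$ --- necessarily coincides with the paper's. The one place where you genuinely diverge is axiom (ii). The paper takes the small open connected nbd $W$ of $y$ in $X$ supplied by axiom (ii), intersects it with $Y$, and then invokes local connectedness of the open subspace $Y$ (inherited from $X$ via Remark \ref{rm:lcon}) to extract an open connected $U$ with $y\in U\subseteq W\cap Y$, finishing with monotonicity $m_Y(U)\le m(W)<\varepsilon$. You instead apply axiom (iii) of $X$ to the open set $Y$ itself to obtain a threshold $\varepsilon_0$ below which every open connected nbd of $x$ automatically lies in $Y$, and then run axiom (ii) at level $\min\{\varepsilon,\varepsilon_0\}$. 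The two mechanisms are essentially equivalent --- the local connectedness the paper uses is itself derived from axioms (ii) and (iii) by exactly your shrinking argument --- but yours is slightly more economical, bypassing the intermediate notion and the monotonicity step entirely. For axiom (iii) your argument matches the paper's (the extra intersection with $\varepsilon_0$ there is harmless but superfluous, since an open connected nbd in $(Y,\ta_Y)$ is already a subset of $Y$), and your observation that connectedness is intrinsic, so that Lemma \ref{l:con} together with its trivial converse lets one pass freely between $X$ and the open subspace $Y$, is precisely the point the paper relies on.
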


\begin{proof}Define $\M_{Y}\coloneqq\{A\cap Y: A\in \M\}$. Clearly it is a $ \sigma $-algebra in $ Y $. For this, it should only be noted that for any $ A\in\M, Y\smallsetminus A\cap Y=(X\smallsetminus A)\cap Y $. Let $ m_Y $ be the restriction of $ m $ to $ \M_Y $. Since $Y$ is an open subspace of $X$, every open set of $Y$ is also open in $(X,\ta)$ and hence is $ \M $-measurable. Thus, $\M_Y$ contains all open sets of $Y$.
	
	Now let $y\in Y$ be arbitrary. Thus $y\in X$ and for every $\varepsilon>0,\exists$ an open connected nbd $W$ of $y$ in $X$ such that $m(W)<\varepsilon$. $ X $ being tms is locally connected, by Remark \ref{rm:lcon}. So $ Y $ being an open subspace of $ X $ is a locally connected subspace of $X$. Consequently, $\exists$ an open connected subset $U$ of $Y$ such that $y\in U\subseteq W\cap Y$ and hence $m_{Y}(U)\leq m_{Y}(W\cap Y)\leq m(W)< \varepsilon$. 
	
	Now let $V$ be any open subset of $Y$ and $v\in V$ be arbitrary. Then $V$ is an open nbd of $v$ in $X$. Consequently, $\exists$ an $\varepsilon> 0$ such that for every open connected nbd $U$ of $v$ in $X$ with $m(U)< \varepsilon,$ we have $U\subseteq V\cdots\cdots(1)$ [Since $X$ is a tms]. Since $Y$ is an open subspace of $X,$ every open connected subset of $Y$ is also an open connected subset of $X$ [by Lemma \ref{l:con}] . Therefore from $(1)$, for every open connected nbd $W$ of $v$ in $Y$ with $m_{Y}(W)<\varepsilon$, we have $ m(W)=m_Y(W)<\epsilon $ and hence $W\subseteq V$. Hence $Y$ is a topological measure space.   
\end{proof}

Looking at the above theorem we can define the following concept.

\begin{Def}
	Let $(X,\ta,\M,m)$ be a tms and $Y\subset X$. Let $ \ta_Y $ be the subspace topology on $ Y $, $\M_{Y}:=\{A\cap Y: A\in \M\}$ and $ m_Y $ be the restriction of $ m $ to $ \M_Y $. If $ (Y,\ta_Y,\M_Y,m_Y) $ is a tms then it is called a \textit{sub topological measure space} or \textit{subtms} of $ X $.
\label{d:subtms}\end{Def}

\begin{Th}
$ \big([a,b],\ta_{ab},\mathscr{L}([a,b]),\lambda\big) $ is a subtms of the tms $(\R,\ta_{u},\mathscr{L}(\R),\lambda)$, where $ \ta_{ab} $ is the subspace topology on $ [a,b] $ induced from the real line $ \R $, $ \mathscr{L}([a,b]) $ is the Lebesgue $ \sigma $-algebra on $ [a,b] $ and $ \lambda $ is the Lebesgue measure. \label{t:[a,b]tms}
\end{Th}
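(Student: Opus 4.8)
The plan is to verify the three axioms of Definition \ref{d:tms} directly for $\big([a,b],\ta_{ab},\mathscr{L}([a,b]),\lambda\big)$, since $[a,b]$ is \emph{not} open in $\R$ and hence Theorem \ref{t:opentms} does not apply. First I would record the routine set-theoretic fact that $\mathscr{L}([a,b])=\{A\cap[a,b]:A\in\mathscr{L}(\R)\}$, so that the proposed structure is exactly the candidate subtms described in Definition \ref{d:subtms} with $Y=[a,b]$. Axiom (i) is then immediate: any $G\in\ta_{ab}$ has the form $G=W\cap[a,b]$ with $W\in\ta_u\subseteq\mathscr{L}(\R)$, whence $G\in\mathscr{L}([a,b])$.

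The geometric heart of the argument is the description of the open connected subsets of $[a,b]$. By Lemma \ref{l:con} every connected subset of $[a,b]$ is connected in $\R$, hence is an interval; combining this with openness in the subspace topology, the open connected neighbourhoods of a point $x\in[a,b]$ are precisely the subintervals of $[a,b]$ containing $x$ that are open in $[a,b]$ --- that is, intervals of the form $(\alpha,\beta)$, together with $[a,\beta)$ when $x$ lies near $a$, $(\alpha,b]$ when $x$ lies near $b$, and $[a,b]$ itself. For such an interval $U\ni x$ one has $\lambda(U)=\sup U-\inf U$, and since $\inf U\leq x\leq\sup U$ this yields $x-\inf U\leq\lambda(U)$ and $\sup U-x\leq\lambda(U)$, so $U\subseteq(x-\lambda(U),\,x+\lambda(U))\cap[a,b]$. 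This single containment powers both remaining axioms.

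For axiom (ii), given $x\in[a,b]$ and $\varepsilon>0$, I would take $U:=(x-\frac{\varepsilon}{3},x+\frac{\varepsilon}{3})\cap[a,b]$: this is an open connected neighbourhood of $x$ in $[a,b]$ (an interval, possibly half-open at an endpoint of $[a,b]$) with $\lambda(U)\leq\frac{2\varepsilon}{3}<\varepsilon$. For axiom (iii), given $G\in\ta_{ab}$ and $x\in G$, write $G=W\cap[a,b]$ with $W$ open in $\R$ and choose $\varepsilon>0$ with $(x-\varepsilon,x+\varepsilon)\subseteq W$; then for any open connected neighbourhood $U$ of $x$ in $[a,b]$ with $\lambda(U)<\varepsilon$, the containment from the previous paragraph gives $U\subseteq(x-\varepsilon,x+\varepsilon)\cap[a,b]\subseteq W\cap[a,b]=G$.

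I expect the only mild obstacle to be the bookkeeping at the endpoints $a$ and $b$, where the relevant open connected neighbourhoods are half-open intervals rather than open intervals of $\R$; the identity $\lambda(U)=\sup U-\inf U$ together with $\inf U\leq x\leq\sup U$ handles these uniformly, so essentially no separate case analysis is needed beyond observing that $(x-\delta,x+\delta)\cap[a,b]$ remains connected.
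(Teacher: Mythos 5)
Your proposal is correct and follows the same overall skeleton as the paper (direct verification of axioms (i)--(iii) of Definition \ref{d:tms}, after noting $\mathscr{L}([a,b])=\{A\cap[a,b]:A\in\mathscr{L}(\R)\}$), but it handles the crux --- axiom (iii) --- by a genuinely different and cleaner device. The paper reduces to a basic open set $V$ containing the point and then runs a four-case analysis according to whether $V$ is of the form $(c,d)$, $[a,c)$, $(c,b]$ or $[a,b]$, choosing $\varepsilon$ separately in each case. You instead observe, via Lemma \ref{l:con}, that every open connected neighbourhood $U$ of $x$ in $[a,b]$ is an interval with $\inf U\leq x\leq\sup U$ and $\lambda(U)=\sup U-\inf U$, so that $U\subseteq[x-\lambda(U),x+\lambda(U)]\cap[a,b]$; with $\lambda(U)<\varepsilon$ this immediately gives $U\subseteq(x-\varepsilon,x+\varepsilon)\cap[a,b]\subseteq G$, uniformly in $x$ and with no endpoint casework. (One pedantic note: your displayed containment should use closed brackets $[x-\lambda(U),x+\lambda(U)]$, since $U=[a,b]$ with $x=a$ attains the bound; this does not affect the application, where the strict inequality $\lambda(U)<\varepsilon$ does the work.) What your route buys is exactly the estimate ``$|x-y|\leq\lambda(U)$ for $x,y\in U$'' that the paper later exploits in the metric-space setting of Theorem \ref{t:mtms}, so your argument is the one that generalises; what the paper's casework buys is only an explicit catalogue of the basic neighbourhoods of $[a,b]$, which is not needed elsewhere.
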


\begin{proof}
	Clearly $\mathscr{L}([a,b]) = \big\{A\cap[a,b]: A\in\mathscr{L}(\R)\big\}$ contains all open subsets of $[a,b]$, $ \mathscr{L}(\R) $ being the Lebesgue $ \sigma $-algebra on $ \R $. 
	
	Now for each $x\in[a,b]$ and for every $\varepsilon\in(0,b-a),\exists$ an open connected nbd $V=(x-\frac{\varepsilon}{3}, x+\frac{\varepsilon}{3})$ of $x$ such that $\lambda
	(V)=\frac{2\varepsilon}{3}<\varepsilon$. 
	
	Thereafter, let $W$ be any open subset of $[a,b]$ and $w\in W$. Then $\exists$ a basic open subset $V$ of $[a,b]$ such that $w\in V \subseteq W$.\\
	\textbf{Case 1:}  Let $V = (c,d), a\leq c< w< d\leq b$. Choose $0<\varepsilon< \min\{w-c,d-w\}$. Then every open connected nbd $U$ of $w$ with measure $<\varepsilon$ is always a subset of $ V $ and consequently, $U$ is a subset of $W$.\\
	\textbf{Case 2:} Let $V = [a,c),$ for some $c\leq b$. Then $a\leq w< c$. If $w = a,$ then choose $0< \varepsilon< c-a$. Thus any open connected nbd $U$ of $w$ with $\lambda(U)<\varepsilon$ is always of the form [$a, a+\varepsilon'$) where $0<\varepsilon'\leq\varepsilon$. Therefore $a\in U = [a,a+\varepsilon')\subseteq V\subseteq W.$ Now if $w\neq a,$ then choose $0<\varepsilon<\min \{w-a,c-w\}$. Then every open connected nbd of $w$ with measure $<\varepsilon$ is always a subset of $V=[a,c)$ and consequently, a subset of $W$.\\
	\textbf{Case 3:} Let $V=(c,b],$ for some $c\geq a$. Then $c< w\leq b$. The rest of proof of this case is similar as Case 2.\\
	\textbf{Case 4:} Let $V=[a,b].$ Then $a\leq w\leq b$. This case follows from Case 1, 2 and 3.
	
	Therefore, $\big([a,b],\ta_{ab},\mathscr{L}([a,b]),\lambda\big)$ is a sub topological measure space of $(\R,\ta_{u},\mathscr{L}(\R),\lambda)$. 
\end{proof}

\begin{Cor}
	Every connected subset of $\R$ is a subtms of $(\R,\ta_{u},\mathscr{L}(\R),\lambda)$. \label{c:contms}
\end{Cor}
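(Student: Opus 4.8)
The plan is to use the classification of connected subsets of $\R$ — they are precisely the intervals, i.e.\ the convex subsets (possibly unbounded, possibly degenerate, possibly empty) — and to verify the three axioms of Definition~\ref{d:tms} uniformly for an arbitrary interval $I$, equipped with the subspace topology $\ta_I$, the trace $\sigma$-algebra $\mathscr{L}(I)=\{A\cap I:A\in\mathscr{L}(\R)\}$ (note that $I$ is itself a Borel set, so $\mathscr{L}(I)\subseteq\mathscr{L}(\R)$) and the restriction of $\lambda$ to $\mathscr{L}(I)$. Axiom~(i) is immediate: any $\ta_I$-open set has the form $G\cap I$ with $G\in\ta_u\subseteq\mathscr{L}(\R)$, so it lies in $\mathscr{L}(I)$. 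For axiom~(ii), given $x\in I$ and $\varepsilon>0$ I would take $V:=\big(x-\tfrac{\varepsilon}{3},\,x+\tfrac{\varepsilon}{3}\big)\cap I$; being the intersection of two intervals it is again an interval, hence connected, it is open in $I$ and contains $x$, and $\lambda(V)\le\tfrac{2\varepsilon}{3}<\varepsilon$.

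The only step that requires an actual argument is axiom~(iii), and even that is short. Let $W$ be open in $I$ and $w\in W$; write $W=G\cap I$ with $G\in\ta_u$ and choose $\delta>0$ with $(w-\delta,\,w+\delta)\subseteq G$. I claim $\varepsilon:=\delta$ works. Indeed, if $U$ is an open connected neighbourhood of $w$ in $I$ with $\lambda(U)<\delta$, then by Lemma~\ref{l:con} $U$ is connected in $\R$, hence an interval, and it contains $w$; were $U$ to contain some point $p$ with $|p-w|\ge\delta$, convexity of $U$ would force the whole segment between $w$ and $p$ into $U$, giving $\lambda(U)\ge\delta$, a contradiction. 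Hence $U\subseteq(w-\delta,\,w+\delta)\cap I\subseteq G\cap I=W$, which is exactly axiom~(iii). This establishes that $(I,\ta_I,\mathscr{L}(I),\lambda|_{\mathscr{L}(I)})$ is a tms, and since its $\sigma$-algebra and measure are the ones induced from $\R$, it is a subtms in the sense of Definition~\ref{d:subtms}.

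I note that one could instead argue case by case, deducing the open intervals (including the rays and $\R$ itself) from Theorem~\ref{t:opentms}, the closed bounded intervals from Theorem~\ref{t:[a,b]tms}, and the degenerate interval $\{a\}$ by direct inspection, with the half-open intervals $[a,b),\,(a,b],\,[a,\infty),\,(-\infty,b]$ handled by copying Cases~2 and~3 of the proof of Theorem~\ref{t:[a,b]tms}; but the uniform proof above is more economical and sidesteps the bookkeeping. In neither approach is there a genuine obstacle: the single delicate point is the one used in verifying axiom~(iii) — namely that a connected (hence convex) subset of $\R$ through a point $w$ cannot have small Lebesgue measure unless it is trapped in a small ball about $w$ — and this is precisely where the order structure of $\R$ enters.
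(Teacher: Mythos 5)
Your proof is correct, and it takes a genuinely different (and more economical) route than the paper intends. The paper states this corollary immediately after Theorem~\ref{t:[a,b]tms} without proof, the implicit argument being a case analysis: open intervals (including rays and $\R$ itself) follow from Theorem~\ref{t:opentms}, compact intervals from Theorem~\ref{t:[a,b]tms}, and the half-open intervals and singletons are to be handled by repeating Cases~2--4 of that proof --- exactly the bookkeeping you describe and then sidestep. Your uniform verification for an arbitrary interval $I$ is sound at every step: axiom~(i) is the trace construction, axiom~(ii) uses that $(x-\tfrac{\varepsilon}{3},x+\tfrac{\varepsilon}{3})\cap I$ is again convex hence connected (this also covers the degenerate singleton case, where the intersection is $\{x\}$ with measure $0$), and for axiom~(iii) you correctly isolate the one nontrivial point: an open connected neighbourhood $U$ of $w$ in $I$ is, by Lemma~\ref{l:con} and the classification of connected subsets of $\R$, convex, so containing a point at distance $\geq\delta$ from $w$ would force $\lambda(U)\geq\delta$. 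What the uniform argument buys is that it treats all nine interval types at once and makes visible that the order/convexity structure of $\R$ is the only real input; what the paper's casewise route buys is reuse of the already-proved Theorems~\ref{t:opentms} and~\ref{t:[a,b]tms} at the cost of leaving several cases to the reader. Either is acceptable; yours is complete as written.
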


\begin{Th}
	Every tms is uniformisable.
\label{t:unif}\end{Th}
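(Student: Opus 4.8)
The plan is to establish the stronger fact that every tms is pseudometrisable — since uniformisability is precisely the property that the topology is induced by a family of pseudometrics, a single compatible pseudometric suffices — and to do so by writing down a base for a uniformity directly from the measure. For each $\varepsilon>0$ set
\[
D_\varepsilon:=\{(x,y)\in X\times X:\ \exists\,U\in\ta\ \text{open connected with}\ x,y\in U\ \text{and}\ m(U)<\varepsilon\},
\]
and let $\mathscr{D}$ be the filter on $X\times X$ generated by $\{D_\varepsilon:\varepsilon>0\}$. The claim is that $\{D_\varepsilon\}_{\varepsilon>0}$ is a base for a uniformity on $X$ whose uniform topology is exactly $\ta$.

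First I would verify the standard criterion for a uniformity base. Each $D_\varepsilon$ contains the diagonal: given $x\in X$, axiom (ii) of Definition~\ref{d:tms} furnishes an open connected nbd $U$ of $x$ with $m(U)<\varepsilon$, so $(x,x)\in D_\varepsilon$. Each $D_\varepsilon$ is symmetric by construction, so $D_\varepsilon^{-1}=D_\varepsilon$. For the composition condition I would show $D_{\varepsilon/2}\circ D_{\varepsilon/2}\subseteq D_\varepsilon$: if $(x,y)\in D_{\varepsilon/2}$ is witnessed by $U$ and $(y,z)\in D_{\varepsilon/2}$ by $V$, then $y\in U\cap V$, so $U\cup V$ is open and connected (union of two overlapping connected open sets), contains $x$ and $z$, and $m(U\cup V)\le m(U)+m(V)<\varepsilon$ by finite subadditivity of $m$; hence $(x,z)\in D_\varepsilon$. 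Finally $D_{\min\{\varepsilon_1,\varepsilon_2\}}\subseteq D_{\varepsilon_1}\cap D_{\varepsilon_2}$, so the family is downward directed. Thus $\mathscr{D}$ is a uniformity on $X$.

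Next I would identify the induced topology. For the inclusion ``$\mathscr{D}$-open $\subseteq\ta$'' it is enough to see that every set $D_\varepsilon[x]=\{y:(x,y)\in D_\varepsilon\}$ is $\ta$-open: if $y\in D_\varepsilon[x]$ is witnessed by an open connected $U$ with $x,y\in U$ and $m(U)<\varepsilon$, then the same $U$ witnesses $z\in D_\varepsilon[x]$ for every $z\in U$, so $U\subseteq D_\varepsilon[x]$, and $D_\varepsilon[x]$ is a $\ta$-neighbourhood of each of its points. For the reverse inclusion, let $G\in\ta$ and $x\in G$; axiom (iii) of Definition~\ref{d:tms} yields $\varepsilon>0$ such that every open connected nbd $U$ of $x$ with $m(U)<\varepsilon$ satisfies $U\subseteq G$, which is exactly the statement $D_\varepsilon[x]\subseteq G$; hence $G$ is $\mathscr{D}$-open. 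Therefore the uniform topology of $\mathscr{D}$ coincides with $\ta$, and $(X,\ta,\M,m)$ is uniformisable. The same bookkeeping shows that $\rho(x,y):=\min\bigl\{1,\ \inf\{m(U):U\in\ta\ \text{open connected},\ x,y\in U\}\bigr\}$ is a pseudometric inducing $\ta$, so the tms is in fact pseudometrisable.

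As for the main obstacle: no step is computationally heavy, but the delicate point is the two–sided comparison of topologies in the last paragraph, because this is where the two substantive axioms of a tms genuinely enter — axiom (iii) to trap a basic entourage-section $D_\varepsilon[x]$ inside an arbitrary open set, and axiom (ii), together with the union-of-overlapping-connected-sets fact and finite subadditivity of $m$, to make $\{D_\varepsilon\}$ a bona fide uniformity base. One should also note that $m(U)$ may be infinite for large connected open $U$; this is harmless, since only sets of measure $<\varepsilon$ occur in the definition of $D_\varepsilon$.
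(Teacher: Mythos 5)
Your proof is correct and follows essentially the same route as the paper: the entourages $D_\varepsilon$ are exactly the paper's $N(\varepsilon)$, the base axioms are checked with the same union-of-overlapping-connected-sets and subadditivity argument, and the two-sided topology comparison uses axioms (ii) and (iii) just as in the paper's Step 2. Your closing remark that the truncated infimum $\rho$ is already a pseudometric inducing $\ta$ is a nice bonus (the paper proves pseudometrisability separately, under an extra finiteness hypothesis that your $\min\{1,\cdot\}$ truncation avoids), but it is not needed for uniformisability.
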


\begin{proof}
	 Let $(X,\ta,\M,m)$ be a tms. For every $\varepsilon>0,$ we define\\ \centerline{$N(\varepsilon):=\big\{(x,y)\in X\times X: x,y\in U\text{ for some open connected set }U\text{ with } m(U)<\varepsilon\big\}$} and $\B(X) := \{N(\varepsilon) : \varepsilon>0\}$.\\ 
	 \textbf{\underline{Step 1}:} \textit{$\B(X)$ forms a base for some uniformity on $X$.}
	 
	(a) Let $x\in X$.  Then for every $\varepsilon>0, \exists$ an open connected  set $U$ in $X$ such that $x\in U$ and $m(U)<\varepsilon$ (by axiom (ii) of Definition \ref{d:tms} ). Thus $(x,x)\in N(\varepsilon),\forall\varepsilon>0$. Consequently, $\varDelta(X):=\big\{(x,x)\in X\times X: x\in X\big\}\subseteq N(\varepsilon)$ for all $\varepsilon>0$.
	
	(b)  $(x,y)\in[N(\varepsilon)]^{-1}\iff(y,x)\in N(\varepsilon)\iff\exists$ an open connected set $U\ni y,x$ and $m(U)<\varepsilon\iff(x,y)\in N(\varepsilon)$. Therefore $N(\varepsilon) = [N(\varepsilon)]^{-1},\forall\,\varepsilon>0.$
	
	(c) Let $N(\varepsilon)\in\B(X)$ for some $\varepsilon>0$. Let $(x,y)\in N(\frac{\varepsilon}{2})\circ N(\frac{\varepsilon}{2}) \implies\exists\, z\in X,$ such that $(x,z)\in N(\frac{\varepsilon}{2}),(z,y)\in N(\frac{\varepsilon}{2})\implies\exists$ two open connected sets $U$ and $V$ such that $x,z\in U$, $z,y\in V$, $m(U)<\frac{\varepsilon}{2}$ and $m(V)<\frac{\varepsilon}{2}$. Now $W:=U\cup V$ is an open connected set (since $U$ and $V$ both are connected and $z\in U\cap V$) such that $x,y\in W$ and $m(W)\leq m(U)+m(V)<\varepsilon\,\implies\,(x,y)\in N(\varepsilon)$. Therefore $N(\frac{\varepsilon}{2})\circ N(\frac{\varepsilon}{2})\,\subseteq\,N(\varepsilon)$ for every $\varepsilon>0$.
	
	(d) Let $N(\varepsilon), N(\delta)\in\B(X)$. Choose $\eta>0$ such that $0<\eta< \min\{\varepsilon,\delta\}$.
    Now, $(x,y)\in N(\eta)$  $\implies\exists$ an open connected  set $U$ such that $x,y\in U,m(U)<\eta\implies(x,y)\in N(\varepsilon)$, $(x,y)\in N(\delta)$ (since $\eta<\varepsilon,\eta<\delta)\implies(x,y)\in N(\varepsilon)\cap N(\delta)$. Thus $N(\eta)\subseteq N(\varepsilon)\cap N(\delta)$.\\ Hence, $\B(X)$ forms a base for some uniformity $\U$ on $X$.\\
    \textbf{\underline{Step 2}:} \textit{If $\ta_{\U}$ is the topology generated by $\U$ on $X$, then $\ta_{\U}=\ta$.} \\   A base for the topology $\ta_{\U}$ on $X$ is given by $\mathscr{N}:=\big
    \{N(\varepsilon)[x] : x\in X, \varepsilon>0\big\}$ where $N(\varepsilon)[x]:= \{y\in X:(x,y)\in N(\varepsilon)\}$. Let $G\in\ta_{\U}$ and $x\in G$. Then $\exists\,\varepsilon>0$ such that $x\in N(\varepsilon)[x]\subseteq G\implies(x,x)\in N(\varepsilon)\implies\exists$ an open connected set $U$ in $(X,\ta)$ containing $x$ and $m(U)<\varepsilon$. We claim that $U\subseteq G$. For this, let $y\in U$. Then  $x,y\in U$ and $m(U)<\varepsilon\implies(x,y)\in N(\varepsilon)\implies y\in N(\varepsilon)[x]\subseteq G\implies U\subseteq G\implies G\in\ta$. Therefore $\ta_{\U}\subseteq\ta$. 
    
    Conversely, let $V\in\ta$ and $x\in V$. Then, by axiom (iii) of Definition \ref{d:tms} , $\exists\,\varepsilon>0$ such that for every open connected nbd $U$ of $x$ with $m(U)<\varepsilon$, we have $U\subseteq V\cdots\cdots$(i). Let $y\in N(\varepsilon)[x]$. Then $(x,y)\in N(\varepsilon)\implies\exists$ an open connected set $U$ in $(X,\ta)$ such that $x,y\in U$ and $m(U)<\varepsilon\implies U\subseteq V$ [by (i)] $\implies y \in V$. Thus, $x\in N(\varepsilon)[x]\subseteq V\implies V\in\ta_{\U}$. Therefore,$\ta\subseteq\ta_{\U}$. Consequently, $\ta=\ta_{\U}$.
   
    Therefore, $X$ is uniformisable. 
\end{proof}

\begin{Th}
	Let $(X,\ta,\M,m)$ be a tms with the additional property : for each $ x,y\in X$, $\exists $ an open connected set $ U $ in $ (X,\ta) $ such that $ x,y\in U $ and $ m(U)<\infty $. Then the tms is pseudometrisable.
\label{t:pseudo}\end{Th}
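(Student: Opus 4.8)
The plan is to produce an explicit pseudometric inducing $\ta$, rather than to quote a general metrisation theorem for uniform spaces. Define
$$d(x,y):=\inf\big\{m(U): U\text{ is an open connected subset of }(X,\ta)\text{ with }x,y\in U\big\}.$$
The extra hypothesis is precisely what ensures that for every pair $x,y\in X$ this infimum is taken over a non-empty set and that $d(x,y)<\infty$, so that $d$ is a genuine real-valued pseudometric and not one taking the value $\infty$. I would also keep at hand, from the proof of Theorem \ref{t:unif}, the entourages $N(\varepsilon)$, the basic sets $N(\varepsilon)[x]$, and the fact that $\ta$ equals the uniform topology $\ta_{\U}$ for which $\{N(\varepsilon)[x]:x\in X,\ \varepsilon>0\}$ is a base.

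First I would verify that $d$ is a pseudometric. Non-negativity is clear and $d(x,x)=0$ follows from axiom (ii) of Definition \ref{d:tms}, which supplies, for each $\varepsilon>0$, an open connected neighbourhood $U$ of $x$ with $m(U)<\varepsilon$. Symmetry is immediate from the symmetric form of the defining collection. For the triangle inequality, fix $x,y,z$ and arbitrary admissible open connected sets $U\ni x,y$ and $V\ni y,z$; then $W:=U\cup V$ is open, and connected since $U,V$ are connected with $y\in U\cap V$, it contains $x$ and $z$, and $m(W)\le m(U)+m(V)$ by finite subadditivity of $m$. Since $d(x,y),d(y,z)<\infty$, taking infima over $U$ and then over $V$ yields $d(x,z)\le d(x,y)+d(y,z)$.

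Next I would match the $d$-balls with the basic uniform neighbourhoods: for every $x\in X$ and $\varepsilon>0$,
$$B_d(x,\varepsilon)=\{y\in X: d(x,y)<\varepsilon\}=\big\{y\in X:\ \exists\text{ open connected }U\text{ with }x,y\in U\text{ and }m(U)<\varepsilon\big\}=N(\varepsilon)[x],$$
the middle equality being the observation that $\inf<\varepsilon$ exactly when some admissible $U$ has $m(U)<\varepsilon$. Hence $\{B_d(x,\varepsilon):x\in X,\ \varepsilon>0\}$ is a base for the $d$-topology and is, at the same time, a base for $\ta$ by Theorem \ref{t:unif}. Therefore $d$ induces $\ta$ and the tms is pseudometrisable.

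I do not anticipate a genuine obstacle: once one reads $m$ on open connected sets as a kind of ``length'', the construction is essentially forced. The only steps needing a little care are the passage from ``$\inf<\varepsilon$'' to an actual witnessing set $U$ in the ball identity, and the verification that the union of two overlapping open connected sets is again open connected with subadditive measure. (As an alternative one could note that $\{N(1/n):n\in\N\}$ is a countable base for the uniformity $\U$ of Theorem \ref{t:unif} and invoke the uniform metrisation theorem; the explicit $d$ above is preferable because it exhibits exactly where the finiteness hypothesis enters.)
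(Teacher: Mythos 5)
Your proposal is correct and follows essentially the same route as the paper: both define $d(x,y)=\inf\{m(U):x,y\in U,\ U\text{ open connected}\}$, verify the pseudometric axioms using the union of two overlapping open connected sets together with subadditivity of $m$, and then identify the resulting $d$-neighbourhoods with the sets $N(\varepsilon)$ from Theorem \ref{t:unif} to conclude that $d$ induces $\ta$. The only cosmetic difference is that you match balls $B_d(x,\varepsilon)=N(\varepsilon)[x]$ while the paper matches entourages $U_\varepsilon=N(\varepsilon)$, which amounts to the same thing.
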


\begin{proof}
Define $d:X\times X\rightarrow \R$ by\\ \centerline{$d(x,y):=\inf\big\{m(U): x,y\in U,U\text{ open connected in }(X,\ta)\big\}$}
\noindent Then, by hypothesis $ d $ is well-defined. Also clearly,\\ (i) $d(x,y)\geq 0$ for all $x,y\in X$\\ (ii) $d(x,y)=d(y,x)$ for all $x,y\in X$\\ (iii) Let $x\in X$. Since $X$ is a tms, for every $\varepsilon>0$, $\exists$ an open connected set $V$ containing $x$ with $m(V)<\varepsilon$. Thus $\inf\{m(U): x\in U,U\text{ open connected}\}<\varepsilon$ and it implies that $ d(x,x)<\varepsilon$. Since $\varepsilon>0$ is arbitrary, $d(x,x)=0$. Hence $x=y\implies d(x,y)=0$.\\ (iv) Let $x,y,z\in X$. Then for any $ \varepsilon>0, \exists $ open connected sets $ V,W $ such that $ x,z\in V$, $y,z\in W $ and $ m(V)<d(x,z)+\varepsilon,\:m(W)<d(z,y)+\varepsilon $. Now $ V\cup W $ is an open connected set containing $ x,y $ [since $ z\in V\cap W $]. So $ m(V\cup W)\leq m(V)+m(W)<d(x,z)+\varepsilon+d(z,y)+\varepsilon\Rightarrow $ $ d(x,y)\leq m(V\cup W)<d(x,z)+d(z,y)+2\varepsilon $. Since $ \varepsilon>0 $ is arbitrary, it follows that $d(x,y)\leq d(x,z)+d(z,y),\forall\, x,y,z\in X$.

Hence $(X,d)$ is a pseudometric space.\\ Now for every $\varepsilon>0,U_{\varepsilon}:=\{(x,y)\in X\times X:d(x,y)<\varepsilon\}$ is an entourage of $(X,d)$. Now $d(x,y)<\varepsilon\Longleftrightarrow\exists$ an open connected set $U$ containing $x,y$  with $m(U)<\varepsilon$. Hence $U_{\varepsilon}=\{(x,y)\in X\times X :x,y\in U\text{ for some open connected set }U\text{ with }m(U)<\varepsilon\}=N(\varepsilon)$ for all $\varepsilon>0$ [$N(\varepsilon)$ is defined as in Theorem \ref{t:unif}]. Thus, the family $\mathscr{B}(X)=\{N(\varepsilon): \varepsilon>0\}$ is precisely the collection of all entourages $\{U_{\varepsilon}:\varepsilon>0\}$ of $(X,d)$. Since $(X,\ta)$ is uniformisable ($ \mathscr{B}(X) $ being a base for the uniformity) by Theorem \ref{t:unif}, it then follows that $(X,\ta,\M,m)$ is pseudometrisable.
\end{proof}

\begin{Rem}
	If we start with a Hausdorff topological measure space $ (X,\ta,\M,m) $ then it will be metrisable, since then the pseudo-metric $ d $ defined in Theorem \ref{t:pseudo}, becomes a metric. In fact, if $ x,y\in X $ with $ x\neq y $, then $ \exists $ disjoint open sets $ U,V $ containing $ x,y $ respectively. Now $ (X,\ta) $ being pseudo-metrisable, $ \exists\,r>0 $ such that $ B(x,r):=\{y\in X:d(x,y)<r\}\subseteq U $. So $ y\notin U\implies d(x,y)\geq r>0 $.
\end{Rem}

\section{Construction of tms using metric}

In this section, we construct a measure on second countable metric space and turn it into a tms with the help of \textbf{Carath\'eodory}'s Theorem \ref{t:car} . Subsequently, we have presented that each separable normed linear space forms a tms with the aid of this new measure. Further, we have developed a tms structure on $S^1$ with the help of smallest arc length metric and therefore, rectifiable curves of $\R^2$ constitute a rich class of tms.
\begin{Def}\cite{Di}
	Let $X$ be a set and $\nu:2^X\rightarrow[0,\infty]$ be an outer measure. A subset $A\subseteq X$ is called \emph{$\nu$-measurable} if it satisfies $\nu(D)=\nu(D\cap A)+\nu(D\smallsetminus A)$ for every subset $D\subseteq X$. [Here $ 2^X $ denotes the set of all subsets of $ X $]
\end{Def}

\begin{Th}\textbf{\em{(Carath\'eodory)}}{\em\cite{Di}}
	Let $X$ be a set, let $\nu:2^X\rightarrow[0,\infty]$ be an outer measure and define $\mathscr{A}:=\mathscr{A}(\nu):=\{ A\subseteq X:A\text{ is } \nu\text{-measurable }\}$.
	 Then $\mathscr{A}$ is a $\sigma$-algebra, the function $\mu:=\nu|_\mathscr{A}:\mathscr{A}\rightarrow[0,\infty]$ is a measure and the measure space $(X,\mathscr{A},\mu)$ is complete. 
\label{t:car}\end{Th}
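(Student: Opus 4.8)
The plan is to follow the classical four-stage argument: first show $\mathscr{A}$ is an algebra, then upgrade it to a $\sigma$-algebra, then verify $\mu=\nu|_{\mathscr{A}}$ is a measure, and finally check completeness. Throughout, the only properties of $\nu$ I would use are $\nu(\emptyset)=0$, monotonicity, and countable subadditivity. The organising observation is that subadditivity already gives $\nu(D)\leq\nu(D\cap A)+\nu(D\smallsetminus A)$ for any $A$ and any test set $D$, so to certify $A\in\mathscr{A}$ it suffices to prove only the reverse inequality $\nu(D)\geq\nu(D\cap A)+\nu(D\smallsetminus A)$ for every $D$.

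First I would record that $\emptyset,X\in\mathscr{A}$ (immediate from $\nu(\emptyset)=0$) and that $\mathscr{A}$ is closed under complementation, since the defining identity is symmetric in $A$ and $X\smallsetminus A$. The first substantive step is closure under finite unions: given $A,B\in\mathscr{A}$ and a test set $D$, apply measurability of $A$ to $D$, then measurability of $B$ separately to $D\cap A$ and to $D\smallsetminus A$, and reassemble the four resulting pieces using $D\smallsetminus(A\cup B)=(D\smallsetminus A)\smallsetminus B$ together with subadditivity applied to $D\cap(A\cup B)=(D\cap A)\cup((D\smallsetminus A)\cap B)$. This yields $\nu(D)\geq\nu(D\cap(A\cup B))+\nu(D\smallsetminus(A\cup B))$, so $A\cup B\in\mathscr{A}$ and $\mathscr{A}$ is an algebra.

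To pass to countable unions, the key lemma is that for pairwise disjoint $A_1,\dots,A_n\in\mathscr{A}$ and every test set $D$,
\[
\nu\Big(D\cap\bigcup_{i=1}^{n}A_i\Big)=\sum_{i=1}^{n}\nu(D\cap A_i),
\]
proved by induction on $n$, splitting $D\cap\bigcup_{i=1}^{n}A_i$ with the measurable set $A_n$. Now let $A=\bigcup_{i=1}^{\infty}A_i$ with the $A_i\in\mathscr{A}$ pairwise disjoint; a general countable union reduces to this via the algebra structure by replacing $A_i$ with $A_i\smallsetminus(A_1\cup\dots\cup A_{i-1})$. Writing $B_n=\bigcup_{i=1}^{n}A_i\in\mathscr{A}$, for any $D$ we get $\nu(D)=\nu(D\cap B_n)+\nu(D\smallsetminus B_n)\geq\sum_{i=1}^{n}\nu(D\cap A_i)+\nu(D\smallsetminus A)$, using the lemma and monotonicity ($D\smallsetminus B_n\supseteq D\smallsetminus A$). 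Letting $n\to\infty$ and then applying countable subadditivity to $D\cap A=\bigcup_i(D\cap A_i)$ gives $\nu(D)\geq\nu(D\cap A)+\nu(D\smallsetminus A)$, so $A\in\mathscr{A}$ and $\mathscr{A}$ is a $\sigma$-algebra.

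Countable additivity of $\mu$ then falls out by taking $D=A$ in the same computation: $\nu(A)\geq\sum_i\nu(A_i)$, while the reverse is countable subadditivity, so $\mu(A)=\sum_i\mu(A_i)$; with $\mu(\emptyset)=\nu(\emptyset)=0$ this shows $\mu$ is a measure. For completeness, suppose $N\in\mathscr{A}$ with $\mu(N)=0$ and $E\subseteq N$; by monotonicity $\nu(E)=0$, and any $\nu$-null set is measurable, since for every $D$ we have $\nu(D\cap E)+\nu(D\smallsetminus E)\leq\nu(E)+\nu(D)=\nu(D)$, which is exactly the needed inequality, so $E\in\mathscr{A}$. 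Hence $(X,\mathscr{A},\mu)$ is complete. The main obstacle — indeed the only place demanding care — is the finite-union step and the induction lemma, where one must decompose the test set correctly and track which measurability hypothesis is applied to which piece; everything else is monotonicity, subadditivity, and a passage to the limit.
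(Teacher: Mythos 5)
The paper does not prove this theorem; it is quoted as a classical result and attributed to Salamon's \emph{Measure and integration}, so there is no in-paper argument to compare against. Your proposal is the standard Carath\'eodory construction --- reduce measurability to the single inequality $\nu(D)\geq\nu(D\cap A)+\nu(D\smallsetminus A)$, establish the algebra structure, prove the finite additivity lemma on test sets by induction, disjointify to handle countable unions, specialise $D=A$ for countable additivity, and observe that $\nu$-null sets are automatically measurable for completeness --- and it is correct as outlined, matching the textbook proof the paper cites.
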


\begin{Th}\textbf{\em{(Carath\'eodory Criterion)}}{\em\cite{Di}}
	Let $(X,d)$ be a metric space and $\nu:2^X\rightarrow[0,\infty]$ be an outer measure. Let $\mathscr{A}(\nu)\subseteq2^X$ be the $\sigma$-algebra defined in Theorem \ref{t:car} and let $\mathscr{B}\subseteq 2^X$ be the Borel $\sigma$-algebra of $(X,d)$. Then the following are equivalent:\\
	(i) $\mathscr{B}\subseteq\mathscr{A}(\nu)$.\\
	(ii) If $A,B\subseteq X$ satisfy $d(A,B):=\displaystyle \inf_{a\in A,b\in B}d(a,b)>0$, then $\nu(A\cup B)=\nu(A)+\nu(B)$. \label{t:car2}
\end{Th}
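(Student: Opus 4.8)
The plan is to establish the two implications separately: (i) $\Rightarrow$ (ii) will be a one-line consequence of the Carath\'eodory splitting identity, while (ii) $\Rightarrow$ (i) will require the classical ``dyadic annulus'' device, and that is where the real work lies.

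For (i) $\Rightarrow$ (ii), I would take $A,B\subseteq X$ with $\delta:=d(A,B)>0$ and introduce the set $U:=\{x\in X:d(x,A)<\delta\}$. Since $x\mapsto d(x,A)$ is $1$-Lipschitz, $U$ is open, hence Borel, hence $\nu$-measurable by (i); moreover $A\subseteq U$ and $B\cap U=\emptyset$, because $d(b,A)\geq d(A,B)=\delta$ for every $b\in B$. Testing the splitting identity of $U$ against the test set $D:=A\cup B$ then gives $\nu(A\cup B)=\nu\big((A\cup B)\cap U\big)+\nu\big((A\cup B)\smallsetminus U\big)=\nu(A)+\nu(B)$, since $(A\cup B)\cap U=A$ and $(A\cup B)\smallsetminus U=B$.

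For (ii) $\Rightarrow$ (i): as $\mathscr A(\nu)$ is a $\sigma$-algebra by Theorem \ref{t:car} and $\mathscr B$ is generated by the closed sets, it suffices to show that an arbitrary closed set $C\subseteq X$ is $\nu$-measurable, i.e. $\nu(D)\geq\nu(D\cap C)+\nu(D\smallsetminus C)$ for every $D\subseteq X$ (the reverse inequality being subadditivity, and the case $\nu(D)=\infty$ vacuous, so I may assume $\nu(D)<\infty$). I would put $D_n:=\{x\in D:d(x,C)\geq\tfrac1n\}$; since $C$ is closed, $x\notin C\iff d(x,C)>0$, so $D_n\uparrow D\smallsetminus C$, and $d(D_n,D\cap C)\geq\tfrac1n>0$, so (ii) yields $\nu(D)\geq\nu\big((D\cap C)\cup D_n\big)=\nu(D\cap C)+\nu(D_n)$. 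It therefore remains to prove $\nu(D_n)\to\nu(D\smallsetminus C)$, and this is the crux of the argument.

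The main obstacle is exactly this convergence, since an outer measure need not be continuous from below; the separation hypothesis only supplies finite additivity on well-separated families, so the limit has to be extracted indirectly. I would handle it via the annuli $R_n:=D_{n+1}\smallsetminus D_n=\{x\in D:\tfrac1{n+1}\leq d(x,C)<\tfrac1n\}$. Using again that $x\mapsto d(x,C)$ is $1$-Lipschitz, one checks $d(R_n,R_m)\geq\tfrac1{n+1}-\tfrac1{n+2}>0$ whenever $m\geq n+2$, so the families $\{R_{2k}\}_k$ and $\{R_{2k+1}\}_k$ each consist of pairwise positively separated sets; iterating (ii) over finite subfamilies (which is legitimate, since $d(A_1,A_2\cup\cdots\cup A_N)=\min_i d(A_1,A_i)>0$) gives $\sum_k\nu(R_{2k})\leq\nu(D)$ and $\sum_k\nu(R_{2k+1})\leq\nu(D)$, whence $\sum_n\nu(R_n)<\infty$. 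Then from $D\smallsetminus C=D_n\cup\bigcup_{k\geq n}R_k$ and countable subadditivity, $\nu(D\smallsetminus C)\leq\nu(D_n)+\sum_{k\geq n}\nu(R_k)$; letting $n\to\infty$ the tail of the convergent series vanishes, so $\nu(D\smallsetminus C)\leq\liminf_n\nu(D_n)$, while $\nu(D_n)\leq\nu(D\smallsetminus C)$ by monotonicity. This gives $\nu(D_n)\to\nu(D\smallsetminus C)$, and passing to the limit in $\nu(D)\geq\nu(D\cap C)+\nu(D_n)$ shows $C$ is $\nu$-measurable, hence $\mathscr B\subseteq\mathscr A(\nu)$.
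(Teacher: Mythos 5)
Your argument is correct. Note that the paper does not prove Theorem \ref{t:car2} at all --- it is quoted from the cited reference \cite{Di} without proof --- so there is no in-paper argument to compare against; what you have written is the standard classical proof (open neighbourhood $U=\{x:d(x,A)<\delta\}$ tested against $A\cup B$ for (i)$\Rightarrow$(ii), and the separated dyadic annuli $R_n=\{x\in D:\tfrac1{n+1}\le d(x,C)<\tfrac1n\}$ with the two convergent alternating series to force $\nu(D_n)\to\nu(D\smallsetminus C)$ for (ii)$\Rightarrow$(i)), and it is exactly the argument the cited source gives. All the steps check out, including the finite iteration of (ii) via $d(A_1,A_2\cup\cdots\cup A_N)=\min_i d(A_1,A_i)>0$ and the reduction to closed sets as generators of $\mathscr{B}$.
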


\begin{Th}
	Let $(X,d)$ be a 2nd countable metric space and $\nu:2^X\rightarrow[0,\infty]$ be a function defined by $\nu(A):=\displaystyle\inf \left\{\sum_{n\in\N} \text{diam }(B_n):A\subseteq\bigcup_{n\in\N}B_n,\,B_n \text{ open}, \forall\, n\in\N\right\}$. Then $\nu$ is an outer measure.\label{t:outer}
\end{Th}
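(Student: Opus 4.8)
The plan is to verify the three defining properties of an outer measure for $\nu$: namely $\nu(\emptyset)=0$, monotonicity, and countable subadditivity. Throughout I will use the standing convention $\operatorname{diam}(\emptyset)=0$, and I should first note that $\nu$ is well-defined (i.e.\ the infimum is over a non-empty set): since $(X,d)$ is second countable, $X$ itself is a countable union of open sets (for instance the whole countable base, or simply $X=\bigcup_{n}X$), so every $A\subseteq X$ admits at least one countable open cover and the defining set of sums is non-empty, whence $\nu(A)\in[0,\infty]$.

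First I would check $\nu(\emptyset)=0$: the empty set is covered by the constant sequence $B_n=\emptyset$, giving $\sum_n\operatorname{diam}(B_n)=0$, so $\nu(\emptyset)\le 0$, and non-negativity gives equality. Next, monotonicity: if $A\subseteq A'$, then every countable open cover of $A'$ is also a countable open cover of $A$, so the infimum defining $\nu(A)$ is taken over a superset of the sums defining $\nu(A')$, hence $\nu(A)\le\nu(A')$.

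The substantive step is countable subadditivity: given $\{A_k\}_{k\in\N}$ with $A:=\bigcup_{k}A_k$, I want $\nu(A)\le\sum_k\nu(A_k)$. If $\sum_k\nu(A_k)=\infty$ there is nothing to prove, so assume it is finite; in particular each $\nu(A_k)<\infty$. Fix $\varepsilon>0$. For each $k$, by definition of the infimum choose a countable open cover $\{B^{(k)}_n\}_{n\in\N}$ of $A_k$ with $\sum_{n}\operatorname{diam}(B^{(k)}_n)\le \nu(A_k)+\varepsilon 2^{-k}$. Then the doubly-indexed family $\{B^{(k)}_n\}_{k,n\in\N}$ is a countable (re-index $\N\times\N$ with $\N$) family of open sets covering $A$, so $\nu(A)\le\sum_{k,n}\operatorname{diam}(B^{(k)}_n)=\sum_k\sum_n\operatorname{diam}(B^{(k)}_n)\le\sum_k\big(\nu(A_k)+\varepsilon 2^{-k}\big)=\sum_k\nu(A_k)+\varepsilon$. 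Letting $\varepsilon\to 0$ yields $\nu(A)\le\sum_k\nu(A_k)$.

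The main obstacle — really the only point needing care — is the bookkeeping in the subadditivity argument: ensuring the union of countably many countable covers is still a legitimate countable cover (handled by a bijection $\N\to\N\times\N$) and that the interchange/reordering of the non-negative double series $\sum_{k,n}\operatorname{diam}(B^{(k)}_n)$ is valid, which it is by Tonelli for series with non-negative terms. No countability of the metric space is actually used here beyond guaranteeing $\nu$ is well-defined; the argument is otherwise the standard Method I / Carath\'eodory construction. Hence $\nu$ is an outer measure.
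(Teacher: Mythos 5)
Your proof is correct and follows essentially the same route as the paper: monotonicity via inclusion of covers, and countable subadditivity via the standard $\varepsilon 2^{-k}$ trick with a re-indexed doubly-indexed cover. The only cosmetic difference is in showing $\nu(\emptyset)=0$, where you cover by empty sets using the convention $\operatorname{diam}(\emptyset)=0$ while the paper covers by balls $B(x_n,\varepsilon/2^n)$ of total diameter $2\varepsilon$; both work.
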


\begin{proof}
	For any $A\subseteq X,\nu(A)\in [0,\infty]$. Now, we prove the theorem in the following three steps.\\
	\textbf{Step 1:} If $A_1\subseteq A_2$, then $\nu(A_1)\leq\nu(A_2)$. In fact, 
	$A_1\subseteq A_2$ implies any open cover of $A_2$ is an open cover of $A_1$ also. Hence $\nu(A_1)\leq \nu(A_2)$.\\
	\textbf{Step 2:} \textit{If $A_i\subseteq X,\forall\, i\in\N$, then $\displaystyle\nu\left(\bigcup_{i\in\N}A_i\right)\leq\sum_{i\in\N}\nu(A_i)$.}\\ To prove this, let us denote $A=\displaystyle\bigcup_{i\in\N}A_i$. If there exists a $j\in\N$ such that $\nu(A_j)=\infty$, then obviously $\nu(A)\leq\displaystyle\sum_{i\in\N}\nu(A_i)$. Let $\nu(A_i)<\infty$ for all $i\in\N$. Then for each $i\in\N$ and $\forall\, \varepsilon>0,$ there exists a sequence of open sets $\{B_{n}^{i}\}_{n}$ covering $A_i$, such that $\displaystyle\sum_{n\in\N}\text{diam }(B_{n}^{i})<\nu(A_i) + \tfrac{\varepsilon}{2^i}$\ for all $i\in\N$  $\implies$  $\displaystyle\sum_{i\in\N}\sum_{n\in\N}^{} \text{diam}(B_{n}^{i})\leq\sum_{i\in\N}[\nu(A_i)+\tfrac{\varepsilon}{2^i}]=\sum_{i\in\N}\nu(A_i)+\varepsilon$. Since $\{B_{n}^{i}\}_{i,n}$ is a countable open cover of $A$, therefore by definition of $\nu$, $\nu(A)\leq\displaystyle\sum_{i\in\N}\nu(A_i)+\varepsilon$. Since $\varepsilon>0$ is arbitrary, $\nu(A)\leq\displaystyle\sum_{i\in\N}\nu(A_i)$. \\
	\textbf{Step 3:} \emph{$\nu(\emptyset)=0$.} To prove this let $\varepsilon>0$ be arbitrary and $\{x_n\}_n$ be any sequence of points on $X$. Then $\emptyset\subseteq\displaystyle\bigcup_{n\in\N}B(x_n,\tfrac{\varepsilon}{2^n})\implies$  $\nu(\emptyset)\leq\displaystyle\sum_{n\in\N}\text{diam }B(x_n,\tfrac{\varepsilon}{2^n}) =\displaystyle\sum_{n\in\N}\tfrac{\varepsilon}{2^{n-1}}=2\cdot\varepsilon$.
	Since $\varepsilon>0$ is arbitrary
	, $\nu(\emptyset)=0$. \\ 
	Hence $\nu$ is an outer measure.
	\end{proof}

\begin{Rem}
	Let $(X,d)$ be a 2nd countable metric space and $\nu:2^X\rightarrow[0,\infty]$ be an outer measure as defined in Theorem \ref{t:outer}. Then by Theorem \ref{t:car} , $(X,\mathscr{A},\mu)$ is a complete measure space where $\mathscr{A}$ and $\mu$ are defined as in Theorem \ref{t:car} . \label{r:outer3} 
\end{Rem}

\begin{Th}
	Let $(X,d)$ be a 2nd countable metric space and $\nu:2^X\rightarrow[0.\infty]$ be an outer measure defined as in Theorem \ref{t:outer} . If $A,B\subseteq X$ satisfy $d(A,B):=\displaystyle \inf_{a\in A,b\in B}d(a,b)>0$, then $\nu(A\cup B)\geq\nu(A)+\nu(B)$.\label{t:outer2}
\end{Th}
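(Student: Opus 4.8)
The plan is to argue directly from the definition of $\nu$, reducing to covers by open sets of small diameter — the standard device behind the Carath\'eodory criterion (Theorem \ref{t:car2}). To begin, if $\nu(A\cup B)=\infty$ there is nothing to prove, so assume $\nu(A\cup B)<\infty$. Put $\delta:=d(A,B)>0$ and fix $\varepsilon>0$; by the definition of $\nu$ choose a countable family $\{B_n\}_{n\in\N}$ of open subsets of $X$ with $A\cup B\subseteq\bigcup_{n\in\N}B_n$ and $\sum_{n\in\N}\text{diam}(B_n)<\nu(A\cup B)+\varepsilon$.

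The key step is to arrange that $\text{diam}(B_n)<\delta$ for every $n$. For this I would replace each member of the cover whose diameter is $\geq\delta$ by a countable family of open sets, each of diameter $<\delta$, that still covers it and whose diameters sum to at most $\text{diam}(B_n)+\varepsilon\,2^{-n}$; second countability (equivalently, separability) of $(X,d)$ should make such a refinement available, e.g.\ by working with a fixed countable base whose members have diameter $<\delta$. After this reduction we may assume $\text{diam}(B_n)<\delta$ for all $n$, with $\sum_{n\in\N}\text{diam}(B_n)<\nu(A\cup B)+2\varepsilon$.

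Now no single $B_n$ can meet both $A$ and $B$, since a set containing a point of $A$ and a point of $B$ has diameter at least $d(A,B)=\delta$. Hence $\N=I_A\sqcup I_B\sqcup I_0$, where $I_A:=\{n:B_n\cap A\neq\emptyset\}$, $I_B:=\{n:B_n\cap B\neq\emptyset\}$, and $I_A\cap I_B=\emptyset$. As $\{B_n\}_{n\in I_A}$ is an open cover of $A$ and $\{B_n\}_{n\in I_B}$ is an open cover of $B$, the definition of $\nu$ gives
\[
\nu(A)+\nu(B)\ \leq\ \sum_{n\in I_A}\text{diam}(B_n)+\sum_{n\in I_B}\text{diam}(B_n)\ \leq\ \sum_{n\in\N}\text{diam}(B_n)\ <\ \nu(A\cup B)+2\varepsilon,
\]
and letting $\varepsilon\downarrow 0$ yields $\nu(A)+\nu(B)\leq\nu(A\cup B)$. (Combined with the subadditivity of $\nu$ from Theorem \ref{t:outer}, this gives equality, so Theorem \ref{t:car2} will then apply.)

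The step I expect to be the genuine obstacle — everything else being routine bookkeeping with countable subadditivity — is this reduction to small-diameter covers: given an open $U$ with $\text{diam}(U)\geq\delta$ one must produce a countable cover of $U$ by open sets of diameter $<\delta$ whose total diameter exceeds $\text{diam}(U)$ by no more than a prescribed amount, and it is exactly here that second countability must be invoked with care, since a careless ball-refinement can inflate the sum of diameters. Once this reduction is in place, the disjointness $I_A\cap I_B=\emptyset$ forced by the positive separation $d(A,B)=\delta$ makes the rest immediate.
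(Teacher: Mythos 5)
The step you single out as ``the genuine obstacle'' is not merely delicate --- it is impossible, and this is where the argument breaks down. Take $X=\R^2$ with the Euclidean metric and let $U$ be an open disc of diameter $D$. Any set of diameter $t$ is contained in a closed ball of radius $t$ and so has Lebesgue area at most $\pi t^2$; hence any countable cover of $U$ by open sets of diameters $d_n<\delta$ must satisfy $\sum_n\pi d_n^2\geq \pi D^2/4$, and therefore $\sum_n d_n>\tfrac{1}{\delta}\sum_n d_n^2\geq \tfrac{D^2}{4\delta}$, which already exceeds $\text{diam}(U)+\varepsilon$ once $D$ is moderately larger than $\delta$. So no small-diameter refinement with essentially the same total diameter exists; second countability is irrelevant, the obstruction is geometric. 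What your reduction actually does is replace $\nu$ by the scale-restricted pre-measure $\nu_\delta(E):=\inf\big\{\sum_n\text{diam}(B_n): E\subseteq\bigcup_nB_n,\ B_n\text{ open},\ \text{diam}(B_n)<\delta\big\}$, and in general $\nu_\delta$ is strictly larger than $\nu$. Your argument is the standard (and correct) proof that $\nu_\delta(A\cup B)\geq\nu_\delta(A)+\nu_\delta(B)$ whenever $d(A,B)>\delta$ --- i.e.\ that the Hausdorff measure $\lim_{\delta\to0}\nu_\delta$ is a metric outer measure --- but that is a statement about a different set function.

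The paper proceeds differently: it keeps the original cover and, for each $B_n$ meeting both $A$ and $B$, retains only two open pieces, namely $B_n\smallsetminus(\overline{B}\cup S_A^n)$ (the points of $B_n$ at distance $<\varepsilon/4$ from $A$, off $\overline{B}$) and the symmetric set for $B$, claiming each has diameter at most $\varepsilon/2\leq\tfrac12\text{diam}(B_n)$, so that the two pieces together cost no more than $B_n$ did. Be aware, however, that the difficulty you ran into is intrinsic to the statement and not only to your method: the paper's diameter estimate rests on the inequality $d(x,y)\leq d(x,A)+d(y,A)$, which fails when the points of $A$ nearest to $x$ and to $y$ are far apart, and in fact the theorem itself fails in $\R^2$. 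Let $A:=\{(\cos t,\sin t):\sin t\geq\eta\}$ and $B:=\{(\cos t,\sin t):\sin t\leq-\eta\}$ for small $\eta>0$; then $d(A,B)\geq2\eta>0$, and since the orthogonal projection $p$ onto the $x$-axis is $1$-Lipschitz, any open cover $\{B_n\}$ of $A$ gives $\sum_n\text{diam}(B_n)\geq\lambda^*\big(p(A)\big)=2\sqrt{1-\eta^2}$, so $\nu(A),\nu(B)\geq2\sqrt{1-\eta^2}$; yet $A\cup B$ lies in an open disc of diameter $2+2\eta$, so $\nu(A\cup B)\leq2+2\eta<\nu(A)+\nu(B)$ for $\eta$ small. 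Thus neither your route nor the paper's can be repaired for $\nu$ as defined; the inequality does hold for the scale-restricted $\nu_\delta$ that your refinement implicitly introduces.
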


\begin{proof}
	Let $\{B_n\}_{n\in\N}$ be an open cover of $A\cup B$. Let us denote $\varLambda_1:=\{n\in\N:B_n\cap B=\emptyset\}$, $\varLambda_2:=\{n\in\N:B_n\cap A=\emptyset\}$ and $\varLambda_3:=\{n\in\N:B_n\cap A\not=\emptyset,B_n\cap B\not=\emptyset\}$. Choose $n\in\varLambda_3$ and fix it. Then $B_n\cap A\not=\emptyset$ and $B_n\cap B\not=\emptyset$. Let $a\in B_n\cap A,b\in B_n\cap B$. So $\text{diam }(B_n)\geq d(a,b)\geq d(A,B)=:\varepsilon>0$ (by hypothesis).\\	
	 Let us define $S_{A}^{n}:=\{x\in B_n:d(x,A)\geq\frac{\varepsilon}{4}\}$. Then $A\cap S_{A}^{n}=\emptyset$. Let $x,y\in B_n\smallsetminus(\overline{B}\cup S_{A}^{n})\implies x,y\in B_n$ but $x,y\not\in(\overline{B}\cup S_{A}^{n})\implies x,y\not\in\overline{B}$ and $x,y\not\in S_{A}^{n}\implies d(x,A)<\frac{\varepsilon}{4}$,  $d(y,A)<\frac{\varepsilon}{4}$. Now for any $a\in A,\,d(x,y)\leq d(x,a)+d(y,a)\implies d(x,y)\leq d(x,A)+d(y,A)<\frac{\varepsilon}{2}$. Thus $x,y\in B_n\smallsetminus(\overline{B}\cup S_{A}^{n})\implies d(x,y)<\frac{\varepsilon}{2}$. Since $x,y\in B_n\smallsetminus(\overline{B}\cup S_{A}^{n})$ are arbitrary, $\text{diam }(B_n\smallsetminus(\overline{B}\cup S_{A}^{n}))\leq\frac{\varepsilon}{2}$ ...(i).\\ Let $S_{B}^{n}:=\{x\in B_{n}:d(x,B)\geq\frac{\varepsilon}{4}\}$. Then $S_{B}^{n}\cap B=\emptyset$ and similarly as in (i), $\text{diam } (B_n\smallsetminus(\overline{A}\cup S_{B}^{n}))\leq\frac{\varepsilon}{2}$ ...(ii).\\ Hence from (i) and (ii), for any $n\in\varLambda_3$, we obtain $\text{diam } (B_n)\geq\varepsilon=\frac{\varepsilon}{2}+\frac{\varepsilon}{2}\geq\text{ diam }(B_n\smallsetminus(\overline{B}\cup S_{A}^{n}))+\text{diam}(B_n\smallsetminus(\overline{A}\cup S_{B}^{n}))\implies$  $\displaystyle\sum_{n\in\varLambda_3}\text{diam } (B_n)\geq\sum_{n\in\varLambda_3}\text{diam} (B_n\smallsetminus(\overline{B}\cup S_{A}^{n}))+\sum_{n\in\varLambda_3} \text{diam} (B_n\smallsetminus(\overline{A}\cup S_{B}^{n}))$. Now $\{B_n:n\in\varLambda_1\}\cup\{B_n\smallsetminus(\overline{B}\cup S_{A}^{n}):n\in\varLambda_3\}$ is an open cover of $A$ and $\{B_n:n\in\varLambda_2\}\cup\{B_n\smallsetminus(\overline{A}\cup S_{B}^{n}):n\in\varLambda_3\}$ is an open cover of $B$. Hence $\displaystyle\sum_{n}\text{diam} (B_n)=\sum_{n\in\varLambda_1}\text{diam} (B_n) + \sum_{n\in\varLambda_2} \text{diam} (B_n) + \sum_{n\in\varLambda_3}\text{diam} (B_n)\geq\sum_{n\in\varLambda_1}\text{diam} (B_n) + \sum_{n\in\varLambda_3}\text{diam} (B_n\smallsetminus(\overline{B}\cup S_{A}^{n}))+\sum_{n\in\varLambda_3}\text{diam} (B_n\smallsetminus(\overline{A}\cup S_{B}^{n}))+\sum_{n\in\varLambda_2}\text{diam} (B_n)\geq\nu(A)+\nu(B)$. Thus for any countable open cover $\{B_n\}_{n\in\N}$ of $A\cup B$, we have $\displaystyle\sum_{n\in\N}\text{diam} (B_n)\geq\nu(A)+\nu(B)$. Hence by definition of $\nu,\;\nu(A\cup B)\geq \nu(A)+\nu(B)$.
\end{proof}

\begin{Rem}
	Let $(X,d)$ be a 2nd countable metric space and $\nu$ be an outer measure as defined in Theorem \ref{t:outer}. Then from Theorems \ref{t:car}, \ref{t:car2}, \ref{t:outer2} and Remark \ref{r:outer3} it follows that, $(X,\mathscr{A},\mu)$ is a complete measure space and $\mathscr{A}$ contains the Borel $\sigma$-algebra $\mathscr{B}$ on $X$. \label{rm:complmsp}
\end{Rem}

\begin{Th}
	Let $(X,d)$ be a second countable metric space in which every open ball is connected. Then $(X,d,\mathscr{A},\mu)$ is a tms, where $\mathscr{A}$ and $\mu$ are as mentioned in Theorems \ref{t:car} and \ref{t:outer}. 
\label{t:mtms}\end{Th}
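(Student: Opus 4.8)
The plan is to verify the three axioms of Definition \ref{d:tms} for $(X,\ta_d,\mathscr{A},\mu)$, where $\ta_d$ denotes the metric topology of $(X,d)$. Axiom (i) is immediate from Remark \ref{rm:complmsp}, where $\mathscr{A}$ is shown to contain the Borel $\sigma$-algebra of $(X,d)$, hence every $\ta_d$-open set. For axiom (ii), fix $x\in X$ and $\varepsilon>0$ and take $U:=B(x,r)$ with $0<r<\varepsilon/2$. By hypothesis $U$ is connected; it is open, so $U\in\mathscr{A}$, and since $U$ is open and covers itself the definition of $\nu$ in Theorem \ref{t:outer} gives $\mu(U)=\nu(U)\le\text{diam}\,U\le 2r<\varepsilon$. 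Thus every point has arbitrarily small open connected neighbourhoods; this is the only place the hypothesis ``every open ball is connected'' enters.

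The crux is axiom (iii), and for it I would first establish the lower bound: \emph{whenever $U\subseteq X$ is connected and $x,y\in U$, one has $\nu(U)\ge d(x,y)$}. To see this, consider $f\colon X\to\R$, $f(z):=d(x,z)$, which is $1$-Lipschitz by the triangle inequality, so $\text{diam}\,f(B)\le\text{diam}\,B$ for every $B\subseteq X$. Let $\{B_n\}_{n\in\N}$ be any countable cover of $U$ by open sets. Since $U$ is connected and $f$ continuous, $f(U)$ is a connected subset of $\R$ containing $f(x)=0$ and $f(y)=d(x,y)$, so $[0,d(x,y)]\subseteq f(U)\subseteq\bigcup_{n\in\N}f(B_n)$, while each $f(B_n)$ is contained in a closed interval of length $\text{diam}\,f(B_n)\le\text{diam}\,B_n$. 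Since a closed bounded interval cannot be covered by countably many intervals of strictly smaller total length, we get $d(x,y)\le\sum_{n\in\N}\text{diam}\,B_n$, and taking the infimum over all such covers yields $\nu(U)\ge d(x,y)$.

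With this lemma in hand, axiom (iii) follows quickly. Given $G\in\ta_d$ and $x\in G$, choose $r>0$ with $B(x,r)\subseteq G$ and set $\varepsilon:=r$. If $U$ is any open connected neighbourhood of $x$ with $\mu(U)=\nu(U)<\varepsilon$ and some $y\in U$ satisfied $d(x,y)\ge r$, then the lemma would force $\nu(U)\ge d(x,y)\ge r=\varepsilon$, a contradiction; hence $U\subseteq B(x,r)\subseteq G$, which is precisely axiom (iii). Therefore $(X,d,\mathscr{A},\mu)$ is a tms. I expect the main obstacle to be the lower bound $\nu(U)\ge d(x,y)$ for connected $U$: this is where connectedness of $U$ genuinely enters, through the intermediate-value property of the $1$-Lipschitz ``radial'' function $z\mapsto d(x,z)$, and it is the step that makes axiom (iii) work at all. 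The remaining verifications are routine and rest only on the definitions together with Remark \ref{rm:complmsp}.
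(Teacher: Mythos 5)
Your proof is correct and follows essentially the same route as the paper's: axiom (i) from Remark \ref{rm:complmsp}, axiom (ii) from a small connected ball whose measure is at most its diameter, and axiom (iii) from the inequality $d(x,y)\leq\mu(U)$ for $x,y$ in an open connected set $U$. The only difference is that the paper simply asserts ``$d(x,y)\leq\mu(V)$'' in its verification of axiom (iii), whereas you actually prove this lower bound via the $1$-Lipschitz map $z\mapsto d(x,z)$ and the intermediate value property of its image; that lemma is precisely where connectedness enters, so your write-up supplies a justification the paper leaves implicit.
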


\begin{proof}
	For the metric space $(X,d)$, the $\sigma$-algebra $\mathscr{A}$ contains all of its open sets (follows from Remark \ref{rm:complmsp} ). Thus $(X,d,\mathscr{A},\mu)$ satisfies axiom (i) of Definition \ref{d:tms} . 
	
	Since every open ball of $(X,d)$ is connected (by hypothesis), for each $x\in X$ and for each $\varepsilon>0$ there exists an open connected set $B(x,\frac{\varepsilon}{3})$ containing $x$ such that $\mu(B(x,\frac{\varepsilon}{3}))=\text{diam}(B(x,\frac{\varepsilon}{3}))=2\cdot \frac{\varepsilon}{3}<\varepsilon$. Therefore $(X,d,\mathscr{A},\mu)$ satisfies axiom (ii) of Definition \ref{d:tms} . 
	
	Let $G$ be any open set and $x\in G$. Then $\exists\,\varepsilon>0$, such that $x\in B(x,\varepsilon)\subseteq G$. Let $V$ be any open connected set containing $x$ such that $\mu(V)<\varepsilon$. Let $y\in V$ be arbitrary. Then $d(x,y)\leq\mu(V)<\varepsilon\implies y\in  B(x,\varepsilon)$. Hence $V\subseteq B(x,\varepsilon)\subseteq G$. Consequently, $(X,d,\mathscr{A},\mu)$ satisfies axiom (iii) of Definition \ref{d:tms} .
	 
	 Hence, $(X,d,\mathscr{A},\mu)$ is a tms.
	 \end{proof}

\begin{Rem}
	In a normed linear space, every open ball being convex is path-connected and hence connected. Thus every 2nd countable (or equivalently, separable) normed linear space is a tms.
\label{rm:nlstms}\end{Rem}

\begin{Not}
	Let $ S^1:=\big\{(a,b)\in\R^2:a^2+b^2=1\big\} $ be the unit circle in the Euclidean plane $ \R^2 $. Define $d:S^1\times S^1\rightarrow\R$ by  $d(x,y):=$the length of smallest arc $ \arc{(xy)} $ along $ S^1 $ joining $ x,y $, for all $x,y\in S^1$. Then $ d $ is a metric on $S^1$; for convenience let us produce a simple justification.\\(i) $d(x,y)\geq0$ for all $x,y\in S^1$ and $d(x,y)=0\iff x=y$.\\ (ii) $d(x,y)= \text{arclength} \arc{(xy)}= \text{arclength} \arc{(yx)}=d(y,x)$, for all $x,y\in S^1$.\\ (iii) $d(x,z)=d(x,y)+d(y,z)$, if $y$ lies inside the smallest arc $\arc{xz}$ and $d(x,z)\leq d(x,y)+d(y,z)$, if $y$ lies outside the smallest arc $\arc{xz}$. Thus $d(x,z)\leq d(x,y)+d(y,z),\forall\, x,y,z\in S^1$.\label{n:metric} 
\end{Not}

\begin{Not} Let $ d_2 $ denotes the subspace metric on $ S^1 $ inherited from the Euclidean plane $ \R^2 $. Then the only basic open sets of $(S^1,d_2)$ and $(S^1,d)$ ($ d $ as defined in above Note \ref{n:metric}) are the arcs of $S^1$ without end points. Consequently, the metrics `$d_2$' and `$d$' produce the same topology $\ta$ on $S^1$. Further, $(S^1,d)$ is a second countable metric space because $\R^2$ with usual topology is second countable. Also any open ball in $ (S^1,d) $ being an arc is connected.
\label{n:2ndcount}\end{Not}

\begin{Th}
	$(S^1,d)$ is a tms with respect to suitably defined $ \sigma $-algebra and measure.
\label{t:S1tms}\end{Th}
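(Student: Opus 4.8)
The plan is to obtain the tms structure on $S^1$ as a direct instance of Theorem \ref{t:mtms}, rather than by checking the three axioms of Definition \ref{d:tms} by hand. Note first that, by Example \ref{e:tms}(v) and the Remark following it, the Lebesgue $\sigma$-algebra and Lebesgue measure do \emph{not} turn $S^1$ into a tms (axiom (iii) fails because every open sub-arc is Lebesgue-null), so the ``suitably defined'' $\sigma$-algebra and measure must be the ones built in Section 3. Accordingly I equip $S^1$ with the smallest-arc-length metric $d$ of Note \ref{n:metric}, let $\nu:2^{S^1}\to[0,\infty]$ be the outer measure of Theorem \ref{t:outer} formed from $d$ (so that all diameters occurring in its definition are $d$-diameters), and let $\mathscr{A}:=\mathscr{A}(\nu)$ and $\mu:=\nu|_{\mathscr{A}}$ be the $\sigma$-algebra and measure furnished by Carath\'eodory's Theorem \ref{t:car}. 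The assertion to be proved is then that $(S^1,d,\mathscr{A},\mu)$ is a tms.

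To apply Theorem \ref{t:mtms} I must verify its two hypotheses, both of which are essentially recorded in Note \ref{n:2ndcount}. For second countability: $d$ is a metric by Note \ref{n:metric}; by Note \ref{n:2ndcount}, $d$ and the Euclidean subspace metric $d_2$ have exactly the open arcs of $S^1$ as basic open sets, hence they induce the same topology $\ta$ on $S^1$, and since $(\R^2,\ta_u)$ is second countable its subspace $(S^1,\ta)=(S^1,d)$ is second countable too. For connectedness of open balls: an open $d$-ball $B(x,r)$ is precisely the open arc of $S^1$ about $x$ of arc-radius $r$ (and is all of $S^1$ once $r$ exceeds half the circumference), and an arc is path-connected, hence connected. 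With both hypotheses in hand, Theorem \ref{t:mtms} yields directly that $(S^1,d,\mathscr{A},\mu)$ is a tms.

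I do not anticipate a serious obstacle; the genuine content has been isolated in the preparatory Notes \ref{n:metric} and \ref{n:2ndcount}. The step deserving most care is the triangle inequality for $d$, which forces a case split according to whether the intermediate point lies on the shorter arc joining the two endpoints (where equality holds) or on the longer one (where one gets strict inequality) --- this is precisely why $d$ is introduced as an object distinct from $d_2$. A secondary bookkeeping point is to keep ``diam'' in the definition of $\nu$ referring to $d$-diameters throughout, so that the estimates $\mu(B(x,\varepsilon/3))\le\text{diam}(B(x,\varepsilon/3))=2\varepsilon/3<\varepsilon$ and $d(x,y)\le\mu(V)$ used inside the proof of Theorem \ref{t:mtms} transfer verbatim to $S^1$. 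Once this is observed, the conclusion is immediate.
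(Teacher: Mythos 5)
Your proposal is correct and follows essentially the same route as the paper: construct $\nu$ from the arc-length metric $d$ via Theorem \ref{t:outer}, obtain the $\sigma$-algebra and measure from Carath\'eodory's Theorem \ref{t:car}, and then invoke Theorem \ref{t:mtms} after checking second countability and connectedness of open balls exactly as recorded in Notes \ref{n:metric} and \ref{n:2ndcount}. No issues.
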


\begin{proof}
	For the metric space $(S^1,d)$ (explained in Note \ref{n:metric}), we define an outer measure $ \nu $ as in Theorem \ref{t:outer} with the help of the metric $ d $. Next we define $\mathfrak M$ as the collection of all $\nu$-measurable sets of $S^1$ and $\mu:=\nu|_{\mathfrak M}:\mathfrak M\rightarrow[0,\infty]$. Then by Theorem \ref{t:car}, $ \mathfrak M $ is a $ \sigma $-algebra on $ S^1 $ and $ \mu $ is a measure on $\mathfrak M$. Also by Remark \ref{rm:complmsp},  $(S^1,\mathfrak M,\mu)$ is a complete measure space and $\mathfrak M$ contains the Borel $\sigma$-algebra $\mathscr{B}$ on $S^1$. 
	
	Now by Note \ref{n:2ndcount}, $ (S^1,d) $ is a second countable metric space where each open ball is connected. Then in view of Theorem \ref{t:mtms} we can say that $(S^1,d,\mathfrak M,\mu)$ is a tms.
\end{proof}

\begin{Rem} Looking at the proof of the Theorem \ref{t:S1tms} and Note \ref{n:2ndcount} we can say that $ (S^1,d_2,\mathfrak M,\mu) $ is also a tms, where $ d_2 $ is the Euclidean metric defined on $ S^1 $ and $ \mathfrak M,\mu $ are defined as in Theorem \ref{t:S1tms} with the help of $ d_2 $ instead of $ d $.\label{rm:S1tms}
\end{Rem}

\begin{Not}
	From Remark \ref{rm:S1tms} , we can say that every rectifiable curve of $\R^2$ is a tms, where the $ \sigma $-algebra and measure are constructed as in Theorems \ref{t:car} and \ref{t:outer} with the help of the Euclidean metric $ d_2 $.  
\label{n:rectitms}\end{Not}

\section{Absolutely continuous function on tms}

In this section, we shall introduce the concept of absolutely continuous function on tms. Some properties of absolutely continuous functions are discussed. Finally several examples of absolutely continuous functions have been presented. In Example \ref{e:ac} (vi), we have unveiled how Lebesgue measure fails to make projection maps and certain trigonometric functions absolutely continuous functions on $\R^2$ despite of existence of natural sense of absolute continuity. This situation is alleviated with the application of measure constructed in Theorem \ref{t:mtms} on $\R^2$. 

\begin{Nota} For a tms $(X,\ta,\M,m)$, for each $\delta>0,$ let us define $\pt{}{\delta}$ to be the collection of all countable family of disjoint open  connected sets $\{E_{i}\}_{i}$ with $ \displaystyle\sum_{i}^{}m(E_{i})<\delta$.
\end{Nota}

\begin{Def}
	Let $(X,\ta,\M,m)$ be a tms. A function $f:X\rightarrow\K$ ($\K$ is the field of real or complex numbers) is said to be an \textit{absolutely continuous function} if for every $\varepsilon>0$, $\exists\,\delta>0$ such that for any $\{D_{i}\}_{i}\in\pt{}{\delta}$ we have $\displaystyle\sum_{i}\omega(f,D_{i})<\varepsilon$  where\\ $\omega(f,D_{i}):= \sup\big\{| f(x)-f(y)| : x,y\in D_{i}\big\}$ for all $i$.
	\label{d:ac}
\end{Def}

\begin{Th}
	Every absolutely continuous function is uniformly continuous.\label{t:absunif}
\end{Th}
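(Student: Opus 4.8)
The plan is to read off uniform continuity directly from the uniform structure that Theorem \ref{t:unif} attaches to a tms. Recall that for a tms $(X,\ta,\M,m)$ this uniformity $\U$ has as a base the family $\B(X)=\{N(\varepsilon):\varepsilon>0\}$, where $N(\varepsilon)$ is the set of pairs $(x,y)$ lying together in some open connected set of $m$-measure $<\varepsilon$; and, again by Theorem \ref{t:unif}, $\U$ induces exactly the topology $\ta$. So to prove the statement it suffices to show: for every $\varepsilon>0$ there is $\delta>0$ with $(x,y)\in N(\delta)\Rightarrow|f(x)-f(y)|<\varepsilon$, which is precisely uniform continuity of $f:X\to\K$ with respect to the canonical uniformity of the tms.

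First I would fix $\varepsilon>0$ and invoke Definition \ref{d:ac} to get the corresponding $\delta>0$, so that $\sum_i\omega(f,D_i)<\varepsilon$ whenever $\{D_i\}_i\in\pt{}{\delta}$. Then I would take $N(\delta)$ as the required entourage. If $(x,y)\in N(\delta)$, there is an open connected set $U$ with $x,y\in U$ and $m(U)<\delta$; the one-element collection $\{U\}$ is a countable family of pairwise disjoint open connected sets with $\sum m(U)=m(U)<\delta$, hence $\{U\}\in\pt{}{\delta}$. Applying absolute continuity to this singleton family yields $\omega(f,U)<\varepsilon$, and since $x,y\in U$ we conclude $|f(x)-f(y)|\le\omega(f,U)<\varepsilon$. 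This gives uniform continuity of $f$ on the tms.

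There is essentially no hard step here; the only point to be careful about is the (trivial but necessary) observation that a singleton collection of open connected sets counts as an admissible family in $\pt{}{\delta}$, so that absolute continuity may legitimately be fed a single small connected neighbourhood. Once that is noted, the estimate above is immediate, and the identification of the relevant notion of uniform continuity with the canonical uniformity $\U$ of Theorem \ref{t:unif} is what ties everything together.
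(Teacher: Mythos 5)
Your proof is correct and follows essentially the same route as the paper: extract $\delta$ from Definition \ref{d:ac}, apply it to a single open connected set of measure $<\delta$ to get $\omega(f,U)<\varepsilon$, and conclude $|f(x)-f(y)|<\varepsilon$ on the entourage $N(\delta)$ from Theorem \ref{t:unif}. Your explicit remark that the singleton family $\{U\}$ is admissible in $\pt{}{\delta}$ is a point the paper leaves implicit, but the argument is the same.
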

\begin{proof}
	Let $(X,\ta,\M,m)$ be a tms and $f:X\rightarrow\K$ be an absolutely continuous function. Let $\varepsilon>0$ be arbitrary. Then $\exists\,\delta>0$ such that for any open connected set $V$ with $m(V)<\delta,$  $\omega(f,V)<\varepsilon$. Hence sup $\{| f(x)-f(y)|: x,y \in V\}<\varepsilon$ whenever $V$ is an open connected set with measure $<\delta\implies| f(x)-f(y)|<\varepsilon$ for all $(x,y)\in N(\delta)$ where $\{N(\eta):\eta>0\}$ is the base for uniformity $\mathscr{U}$ on tms $X$ (as explained in Theorem \ref{t:unif}). Therefore $f:X\rightarrow\K$ is an uniformly continuous function.
\end{proof}

\begin{Rem}
	Every absolutely continuous function being uniformly continuous is continuous and therefore is measurable. Also if $ f $ is absolutely continuous on a tms $ X $  then for any constant $ c $, the function $ f+c $ is also absolutely continuous.
	\label{rm:cont}\end{Rem}

\begin{Th}
	For the tms $\big([a,b],\ta_{ab},\mathscr{L}([a,b]),\lambda\big)$, a function $f:[a,b]\rightarrow\K$ is absolutely continuous with respect to standard definition if and only if it is absolutely continuous with respect to the Definition \ref{d:ac} .
\label{t:compareac}\end{Th}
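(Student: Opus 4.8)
The plan is to prove the two directions separately, exploiting the fact that in the tms $\big([a,b],\ta_{ab},\mathscr{L}([a,b]),\lambda\big)$ the measure of an open connected set (an open subinterval, possibly with an endpoint $a$ or $b$ attached) equals its length, and the oscillation $\omega(f,D_i)$ of a \emph{continuous} $f$ over an interval with endpoints $p<q$ is $\sup\{|f(x)-f(y)|:x,y\in D_i\}$, which in particular dominates $|f(q)-f(p)|$ when $D_i$ is an open interval $(p,q)$.

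First I would handle the implication ``standard $\Rightarrow$ Definition \ref{d:ac}''. Assume $f$ is absolutely continuous in the classical sense; recall that a classically absolutely continuous $f$ on $[a,b]$ is continuous and has the stronger property that for every $\varepsilon>0$ there is $\delta>0$ so that $\sum_i\big(\sup_{[p_i,q_i]}f-\inf_{[p_i,q_i]}f\big)<\varepsilon$ for any finite family of non-overlapping subintervals $(p_i,q_i)$ with $\sum(q_i-p_i)<\delta$ — this ``oscillation'' form is equivalent to the usual one (it follows by splitting each interval at the points where $f$ attains values near its sup and inf, doubling $\delta$). Given $\{D_i\}_i\in\pt{}{\delta}$, each $D_i$ is an open connected subset of $[a,b]$, hence an interval; write $\overline{D_i}=[p_i,q_i]$. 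The $D_i$ are disjoint, so the $(p_i,q_i)$ are non-overlapping, and $\sum_i(q_i-p_i)=\sum_i\lambda(D_i)<\delta$. Since $f$ is continuous, $\omega(f,D_i)=\sup_{[p_i,q_i]}f-\inf_{[p_i,q_i]}f$, and therefore $\sum_i\omega(f,D_i)<\varepsilon$. One has to note countable-versus-finite families cause no trouble: truncate to the first $N$ terms, get the bound uniformly in $N$, and let $N\to\infty$.

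For the converse ``Definition \ref{d:ac} $\Rightarrow$ standard'', assume $f$ satisfies Definition \ref{d:ac}. By Theorem \ref{t:absunif} (or Remark \ref{rm:cont}) $f$ is continuous, in particular bounded on $[a,b]$. Fix $\varepsilon>0$ and take the $\delta>0$ from Definition \ref{d:ac}. Let $(p_1,q_1),\dots,(p_N,q_N)$ be finitely many pairwise disjoint open subintervals of $[a,b]$ with $\sum_i(q_i-p_i)<\delta$; these are disjoint open connected sets with $\sum_i\lambda\big((p_i,q_i)\big)<\delta$, so $\{(p_i,q_i)\}_i\in\pt{}{\delta}$ and hence $\sum_i\omega\big(f,(p_i,q_i)\big)<\varepsilon$. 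By continuity $|f(q_i)-f(p_i)|=\lim_{x\to p_i^+,\,y\to q_i^-}|f(y)-f(x)|\le\omega\big(f,(p_i,q_i)\big)$, so $\sum_i|f(q_i)-f(p_i)|<\varepsilon$, which is exactly the classical definition of absolute continuity on $[a,b]$. (A minor point: the classical definition usually uses $\sum|f(q_i)-f(p_i)|$ over \emph{closed} intervals $[p_i,q_i]$, but since these can be taken with disjoint interiors and $f$ is continuous, passing to the open intervals is harmless.)

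The only genuine subtlety — and the step I would flag as the main obstacle — is the equivalence between the ``sum of increments'' form and the ``sum of oscillations'' form of classical absolute continuity, which is needed in the forward direction; everything else is bookkeeping about open connected subsets of an interval being intervals and about continuity letting oscillation over an open interval equal oscillation over its closure. I would therefore either cite this equivalence as standard or include the short doubling argument (for each subinterval pick $x_i,y_i$ in it with $f(x_i)$ near $\sup$ and $f(y_i)$ near $\inf$, split the interval into at most two subintervals with endpoints among $\{p_i,x_i,y_i,q_i\}$, apply the increment form with $\delta/2$).
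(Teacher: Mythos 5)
Your proposal is correct and follows essentially the same route as the paper: the backward direction is identical, and the forward direction's key step (reducing the oscillation form to the classical increment form by splitting each interval at points nearly attaining the oscillation, then truncating countable families) is precisely the paper's inline argument with the refined family $\{(a_i,x_i),(x_i,y_i),(y_i,b_i)\}_{i=1}^{n}$. The only adjustment needed is that, since $\K$ may be $\C$, the oscillation should be written as $\sup\{|f(x)-f(y)|:x,y\in D_i\}$ rather than $\sup f-\inf f$; your splitting argument then goes through verbatim.
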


\begin{proof}
	Let $f:[a,b]\rightarrow\K$ be an absolutely continuous function with respect to standard definition. Then for every $\varepsilon>0,$ there exists $\delta>0$ such that for every finite family of disjoint sub-intervals $\{(a_{j},b_{j})\}_{j}$ of $[a,b]$  with $\displaystyle\sum_{j}(b_{j}-a_{j})<\delta,$  $\displaystyle\sum_{j}| f(b_{j})-f(a_{j})|<\varepsilon\cdots$ (1) Let $\{E_{i}\}_{i}\in\pt{}{\delta}$ be arbitrary. Then $\displaystyle\sum_{i}\lambda(E_{i})<\delta$.\\
	\textbf{Case 1:} Let $\{E_{i}\}_{i=1}^n$ be a finite family. Thus for every $i=1,2,...,n$, $E_{i}=(a_{i},b_{i})$. For $ i=1,2,...,n $, let $x_i,y_i\in E_{i}$ be arbitrarily chosen such that $x_i< y_i$.
	Hence
		$| f(x_i)-f(y_i)|\leq| f(a_{i})-f(x_{i})|+| f(x_i)-f(y_i)|+| f(y_{i})-f(b_{i})|$ for all $i=1,2,...,n
		\implies\displaystyle\sum_{i=1}^{n}| f(x_{i})- f(y_{i})|\leq\sum_{i=1}^{n}\big\{| f(a_i)- f(x_i)|+| f(x_i)-f(y_i)|+| f(y_i)-f(b_i)|\big\}\cdots\cdots$ (2)\\
	Now $\{(a_i,x_i),(x_i,y_i),(y_i,b_i)\}_{i=1}^{n}$ is a finite family of disjoint sub-intervals of $[a,b]$ such that $\displaystyle\sum_{i=1}^{n}\big\{|x_i-a_i|+| x_i-y_i|+| y_i-b_i|\big\} =\sum_{i=1}^{n}(b_i-a_i)<\delta$. Therefore by (1), we have $\displaystyle\sum_{i=1}^{n}\big\{| f(x_i)-f(a_i)|+| f(x_i)-f(y_i)|+| f(b_i)-f(y_i)|\big\}<\varepsilon\implies\sum_{i=1}^{n}| f(x_i)-f(y_i)|<\varepsilon$ [by (2)]. Thus $\displaystyle\sum_{i=1}^{n}\sup \big\{| f(x_i)-f(y_i)|:x_i,y_i\in E_i\big\}= \sup \left\{\sum_{i=1}^{n}| f(x_i)-f(y_i)| : x_i,y_i\in E_{i}\right\}\leq\varepsilon$. Hence $\displaystyle\sum_{i=1}^{n}\omega(f,E_{i})\leq\varepsilon$.\\
	\textbf{Case 2:} Let $\{E_i\}_{i=1}^\infty$ be an infinite disjoint sequence of open connected subsets of $[a,b]$ such that $\displaystyle\sum_{i\in\mathbb{N}}^{}\lambda(E_{i})<\delta$. Choose $k\in \mathbb{N}$ and fix it. Then $\displaystyle\sum_{i=1}^{k}\lambda(E_i)<\delta\implies\sum_{i=1}^{k}\omega(f,E_{i})\leq\varepsilon$ (by Case 1). Therefore $\displaystyle\lim_{k\to\infty}\sum_{i=1}^{k}\omega(f,E_i)\leq\varepsilon\implies\sum_{i=1}^{\infty}\omega(f,E_i)\leq\varepsilon$. Thus $f$ is absolutely continuous with respect to Definition \ref{d:ac} .
	
	Conversely, let $f:[a,b]\rightarrow\K$ be absolutely continuous with respect to Definition \ref{d:ac} . Hence for every $\varepsilon>0$ there exists $\delta>0$ such that for every $\{E_{i}\}_{i}\in\pt{}{\delta},$  $\displaystyle\sum_{i}\omega(f,E_i)<\varepsilon$. Let $\{(a_i,b_i)\}_{i=1}^{n}$ be a disjoint family of sub-intervals of $[a,b]$ such that $\displaystyle\sum_{i=1}^{n}| a_i-b_i|<\delta$. Then clearly $\{(a_i,b_i)\}_{i=1}^{n}\in\pt{}{\delta}$. Hence by Definition \ref{d:ac} ,  $\displaystyle\sum_{i=1}^{n}\omega(f,(a_i,b_i))<\varepsilon\implies\sum_{i=1}^{n}| f(b_{i}) - f(a_{i})|\leq\sum_{i=1}^{n}\omega(f,(a_i,b_i))<\varepsilon$ \big[since $ f $ being absolutely continuous according to Definition \ref{d:ac}, is continuous on $ [a,b] $, by Remark \ref{rm:cont}\big]. Therefore $f:[a,b]\rightarrow\K$ is absolutely continuous with respect to standard definition of absolute continuity.
\end{proof}

\begin{Th}
	If $ f $ is absolutely continuous on a tms $ X $, then it is absolutely continuous on every open subtms of $ X $.\label{t:openabs}
\end{Th}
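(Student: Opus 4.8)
The plan is to exploit that the subtms structure on an open subspace is assembled from exactly the same open connected sets and the same measure values as the ambient space, so that every test family witnessing a violation of absolute continuity on the subspace is already a test family on the whole space.

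First I would fix an open subspace $Y$ of $X$; by Theorem \ref{t:opentms} the quadruple $(Y,\ta_Y,\M_Y,m_Y)$, with $\M_Y=\{A\cap Y:A\in\M\}$ and $m_Y=m|_{\M_Y}$, is a tms, i.e. an open subtms of $X$. The two elementary facts I would record at the outset are: (a) since $Y$ is \emph{open} in $X$, every $\ta_Y$-open subset of $Y$ is $\ta$-open in $X$, and hence — combining this with Lemma \ref{l:con} — every open connected subset of $Y$ is an open connected subset of $X$; and (b) for every $\M$-measurable $E\subseteq Y$ we have $m_Y(E)=m(E)$. In particular $\pt{Y}{\delta}\subseteq\pt{X}{\delta}$ for each $\delta>0$ (under the obvious identification of a family of subsets of $Y$ with the same family viewed inside $X$): a countable pairwise disjoint family of $\ta_Y$-open connected sets $\{E_i\}_i$ with $\sum_i m_Y(E_i)<\delta$ is, by (a) and (b), a countable pairwise disjoint family of $\ta$-open connected sets with $\sum_i m(E_i)<\delta$.

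Next, given $\varepsilon>0$, I would use absolute continuity of $f$ on $X$ (Definition \ref{d:ac}) to obtain $\delta>0$ such that $\sum_i\omega(f,D_i)<\varepsilon$ for every $\{D_i\}_i\in\pt{X}{\delta}$, and then claim the same $\delta$ works for $f|_Y$ on $Y$. Indeed, for any $\{E_i\}_i\in\pt{Y}{\delta}$ the inclusion $\pt{Y}{\delta}\subseteq\pt{X}{\delta}$ gives $\sum_i\omega(f,E_i)<\varepsilon$; since each $E_i\subseteq Y$, we have $\omega(f|_Y,E_i)=\sup\{|f(x)-f(y)|:x,y\in E_i\}=\omega(f,E_i)$, whence $\sum_i\omega(f|_Y,E_i)<\varepsilon$. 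Thus $f|_Y$ is absolutely continuous on the subtms $Y$.

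There is essentially no serious obstacle here; the whole content is the inclusion $\pt{Y}{\delta}\subseteq\pt{X}{\delta}$, which is immediate once (a) and (b) are noted. The one point requiring a little care is that ``open connected set'' in the Notation preceding Definition \ref{d:ac} refers to the topology of the space under consideration, so the argument genuinely uses that $Y$ is open in $X$ (not merely a subspace): on a non-open subspace a $\ta_Y$-open connected set need not be $\ta$-open in $X$, and the inclusion would fail.
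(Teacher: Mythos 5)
Your proof is correct and follows essentially the same route as the paper's: both arguments reduce to the observation that, because $Y$ is open, every disjoint countable family of open connected subsets of $Y$ with small total measure is already such a family in $X$ (via Lemma \ref{l:con} and $m_Y=m|_{\M_Y}$), so the $\delta$ witnessing absolute continuity on $X$ works on $Y$. Your explicit remark that openness of $Y$ is genuinely needed (a merely relatively open set would break the inclusion $\pt{Y}{\delta}\subseteq\pt{X}{\delta}$) is a nice clarification, but the substance matches the paper's proof.
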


\begin{proof}
	Let $(X,\ta,\M,m)$ be a tms and $ Y $ be an open subtms of $ X $. Also let $ f:X\to\K $ be absolutely continuous.	Let $ \delta>0 $ corresponds to arbitrary $ \varepsilon>0 $ in the definition of absolute continuity of $ f $ on $ X $. Let $ \{E_i\}_i\in\mathcal P_{\delta} $ be arbitrary, where $ E_i $'s are disjoint open connected subsets of $ Y $. Then by Lemma \ref{l:con}, $ E_i $'s are connected sets in $ X $ also. Moreover, $ Y $ being open, $ E_i $'s are open in $ X $. Therefore, by absolute continuity of $ f $ on $ X $ we have $ \displaystyle\sum_i\omega(f,E_i)<\varepsilon $. Thus $ f $ is absolutely continuous on $ Y $.
\end{proof}

\begin{Th}
	Let $(X,\ta,\M,m)$ be a tms and $AC(X)$ be the collection of all $ \K $-valued absolutely continuous functions on $X$. Let $f,g\in AC(X)$. Then\\ (i) $f+g \in AC(X)$,\\ (ii) $\alpha\cdot f\in AC(X)$ for all $\alpha\in\K$,\\ (iii) $f\cdot g\in AC(X)$ provided $f$ and $g$ both are bounded functions,\\ (iv) $\frac{1}{f}\in AC(X)$, if $| f|\geq K$ on $ X $ for some $ K>0 $,\\ (v) $| f|\in AC(X)$. 
	\label{t:ac}\end{Th}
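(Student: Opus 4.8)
The plan is to deduce each of (i)--(v) from an elementary pointwise estimate for the oscillation functional $\omega(\cdot,D)$ over a single open connected set $D$, and then to sum such estimates over a family $\{D_i\}_i\in\pt{}{\delta}$ and invoke Definition \ref{d:ac}. First I would record the basic estimates, all of which follow at once from the definition $\omega(h,D)=\sup\{|h(x)-h(y)|:x,y\in D\}$ together with the triangle inequality in $\K$. For any $h_1,h_2:X\to\K$ and any open connected $D\subseteq X$ one has $\omega(h_1+h_2,D)\le\omega(h_1,D)+\omega(h_2,D)$, $\omega(\alpha h_1,D)=|\alpha|\,\omega(h_1,D)$ for $\alpha\in\K$, and $\omega(|h_1|,D)\le\omega(h_1,D)$ (using $\big||h_1(x)|-|h_1(y)|\big|\le|h_1(x)-h_1(y)|$). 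If in addition $|h_j|\le M$ on $X$ for $j=1,2$, then from $|h_1(x)h_2(x)-h_1(y)h_2(y)|\le|h_1(x)|\,|h_2(x)-h_2(y)|+|h_2(y)|\,|h_1(x)-h_1(y)|$ one gets $\omega(h_1h_2,D)\le M\big(\omega(h_1,D)+\omega(h_2,D)\big)$; and if $|h_1|\ge K>0$ on $X$, then from $\big|\tfrac{1}{h_1(x)}-\tfrac{1}{h_1(y)}\big|=\tfrac{|h_1(x)-h_1(y)|}{|h_1(x)|\,|h_1(y)|}$ one gets $\omega(1/h_1,D)\le K^{-2}\omega(h_1,D)$. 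Since each of these holds for every member of a disjoint family $\{D_i\}_i$, summation over $i$ converts them into the corresponding inequalities between the series $\sum_i\omega(\cdot,D_i)$.

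With these in hand, (i) is settled by fixing $\varepsilon>0$, choosing $\delta_f,\delta_g>0$ from the absolute continuity of $f$ and $g$ so that $\sum_i\omega(f,D_i)<\varepsilon/2$ and $\sum_i\omega(g,D_i)<\varepsilon/2$ whenever the families lie in $\pt{}{\delta_f}$, $\pt{}{\delta_g}$ respectively, and then taking $\delta:=\min\{\delta_f,\delta_g\}$: for $\{D_i\}_i\in\pt{}{\delta}$ the sub-additivity estimate gives $\sum_i\omega(f+g,D_i)<\varepsilon$. Part (ii) is trivial for $\alpha=0$ (the zero function is absolutely continuous), and for $\alpha\ne0$ one picks $\delta$ for $f$ with tolerance $\varepsilon/|\alpha|$ and uses $\omega(\alpha f,D_i)=|\alpha|\,\omega(f,D_i)$. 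Part (v) is immediate: pick $\delta$ for $f$ with tolerance $\varepsilon$, and then $\sum_i\omega(|f|,D_i)\le\sum_i\omega(f,D_i)<\varepsilon$.

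For (iii), I would set $M:=\max\{\sup_{X}|f|,\sup_{X}|g|\}$, which is finite by the boundedness hypothesis; if $M=0$ then $f\equiv g\equiv0$ and $fg\equiv0\in AC(X)$, so one may assume $M>0$. Given $\varepsilon>0$, choose $\delta_f,\delta_g$ for $f,g$ with tolerance $\varepsilon/(2M)$ and set $\delta:=\min\{\delta_f,\delta_g\}$; then for $\{D_i\}_i\in\pt{}{\delta}$ the product estimate yields $\sum_i\omega(fg,D_i)\le M\sum_i\omega(f,D_i)+M\sum_i\omega(g,D_i)<\varepsilon$. For (iv), the hypothesis $|f|\ge K>0$ on $X$ ensures that $1/f$ is a well-defined $\K$-valued function; given $\varepsilon>0$, choose $\delta$ for $f$ with tolerance $K^2\varepsilon$, so that for $\{D_i\}_i\in\pt{}{\delta}$ the reciprocal estimate gives $\sum_i\omega(1/f,D_i)\le K^{-2}\sum_i\omega(f,D_i)<\varepsilon$. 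By Remark \ref{rm:cont} each of the functions produced is continuous, so there is nothing more to check.

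The argument is essentially bookkeeping once the oscillation inequalities above are verified, so I do not expect a genuine obstacle. The only place where more than membership in $AC(X)$ enters is in (iii) and (iv): the product and reciprocal estimates require bounds ($|f|,|g|\le M$, respectively $|f|\ge K$) that hold uniformly over all of $X$, hence simultaneously over every $D_i$, which is exactly what the extra hypotheses supply; without them the passage from the pointwise estimate to the series estimate would fail. I would therefore flag the verification of the product and reciprocal oscillation bounds, together with the attendant choices of $\delta$, as the one spot warranting care.
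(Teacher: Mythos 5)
Your proposal is correct and follows essentially the same route as the paper: establish the pointwise oscillation inequalities $\omega(f+g,D)\le\omega(f,D)+\omega(g,D)$, $\omega(\alpha f,D)=|\alpha|\omega(f,D)$, $\omega(fg,D)\le M(\omega(f,D)+\omega(g,D))$, $\omega(1/f,D)\le K^{-2}\omega(f,D)$, $\omega(|f|,D)\le\omega(f,D)$, then sum over the family and choose $\delta$ with the appropriately rescaled tolerance. The only cosmetic differences are that you treat $\alpha=0$ and $M=0$ as separate cases where the paper uses the denominators $|\alpha|+1$ and an assumed $M>0$ to handle them uniformly.
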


\begin{proof}
	(i) Since $f,g\in AC(X),$ for any $\varepsilon>0,\exists\,\delta>0$ such that for every $\{E_i\}_i\in\pt{}{\delta}$, $\displaystyle\sum_{i}\omega(f,E_i)<\tfrac{\varepsilon}{2}$ and $\displaystyle\sum_{i}\omega(g,E_i)<\tfrac{\varepsilon}{2}$ ...(1). Now for all $i\in\N$ and for all $x,y\in E_i$, $| (f+g)(x)-(f+g)(y)|=|\{f(x)-f(y)\}+\{g(x)-g(y)\}|\leq| f(x)-f(y)|+| g(x)-g(y)|\implies\sup \{|(f+g)(x)-(f+g)(y)|:x,y\in E_i\}\leq \sup \{| f(x)-f(y)|+| g(x)-g(y)|:x,y\in E_i\}= \sup \{| f(x)-f(y)|:x,y\in E_i\}+ \sup \{| g(x)-g(y)|: x,y\in E_i \}\implies\omega(f+g,E_i)\leq\omega(f,E_i)+\omega(g,E_i),\forall\, i\in\N\implies\displaystyle\sum_{i}\omega(f+g,E_i)\leq\sum_{i}\{\omega(f,E_i)+\omega(g,E_i)\}=\sum_{i}\omega(f,E_i)+\sum_{i}\omega(g,E_i)<\tfrac{\varepsilon}{2}+\tfrac{\varepsilon}{2}=\varepsilon$ (by (1)). Therefore $f+g\in AC(X)$.
	
	(ii) For all $x,y\in A\subseteq X$ and $\alpha\in\K,|(\alpha\cdot f)(x)-(\alpha\cdot f)(y)|=|\alpha|\cdot| f(x)-f(y)|$. Since $f\in AC(X)$, for every $\varepsilon>0,\exists\,\delta>0$ such that for every $\{E_{i}\}_{i}\in\pt{}{\delta}$, $\displaystyle\sum_{i}\omega(f,E_i)<\tfrac{\varepsilon}{(|\alpha|+1)}$ ...(2). Hence for every $ i\in\N$ and for every $x,y\in E_i,|(\alpha\cdot f)(x)-(\alpha\cdot f)(y)|=|\alpha|\cdot| f(x)-f(y)|\implies \sup \{|(\alpha\cdot f)(x)-(\alpha\cdot f)(y)|:x,y\in E_i\}=|\alpha|\cdot \sup \{| f(x)-f(y)|:x,y\in E_i\}\implies\omega(\alpha\cdot f,E_i)=|\alpha|\cdot\omega(f,E_i),\forall\, i\in \N\implies\displaystyle\sum_{i}\omega(\alpha\cdot f, E_i)=\sum_{i}|\alpha|\cdot\omega(f,E_i)=|\alpha|\cdot\sum_{i}\omega(f,E_i)<|\alpha|\cdot\tfrac{\varepsilon}{1+|\alpha|}<\varepsilon$ (by (2)). Hence $\alpha\cdot f\in AC(X)$, for all $\alpha\in\K$.
	
	(iii) Since $f,g:X\rightarrow\K$ both are bounded functions, $\exists\, M>0$ such that $| f(x)|\leq M$, $| g(x)|\leq M$, for all $x\in X$. Now $f,g\in AC(X)\implies$ for every $\varepsilon>0,\exists\,\delta>0$ such that for every $\{E_i\}_i\in\pt{}{\delta},\displaystyle\sum_{i}\omega(f,E_i)<\tfrac{\varepsilon}{2M}$ and $\displaystyle\sum_{i}\omega(g,E_i)<\tfrac{\varepsilon}{2M}$ ...(3).\\ Now for all $i\in\N$ and for all $x,y\in E_i,|(f\cdot g)(x)-(f\cdot g)(y)|=| f(x) g(x)-f(y)g(y)|=| f(x) g(x)-f(x) g(y)+f(x) g(y)-f(y) g(y)|\leq| f(x)|\cdot| g(x)-g(y) |+| g(y)|\cdot| f(x)-f(y) |\leq M\cdot| g(x)-g(y)|+ M\cdot| f(x)-f(y)|$. Hence $\sup \{|(f.g)(x)-(f.g)(y)|:x,y\in E_i\}$  $\leq M\cdot \sup \{| g(x)-g(y)|
	:x,y\in E_i\}\ + M\cdot \sup \{| f(x)-f(y)|:x,y\in E_i\}$, for all $i\in\N$  $\implies\omega(f\cdot g,E_i)\leq M\cdot\omega(g,E_i) + M\cdot\omega(f,E_i)$, for all $i\in \N\implies\displaystyle\sum_{i}\omega(f\cdot g,E_i)\leq M\cdot\sum_{i}\omega(g,E_i)+M\cdot\sum_{i}\omega(f,E_i)< M\cdot\tfrac{\varepsilon}{2M} + M\cdot\tfrac{\varepsilon}{2M} = \varepsilon$ (by (3)) $\implies f\cdot g \in AC(X)$.
	
	(iv) $| f|\geq K>0$ on $X \implies$ $f(x)\not=0$ and $|\frac{1}{f(x)}|\leq\frac{1}{K}$ for all $x\in X$. Since $f\in AC(X),$ for every $\varepsilon>0$ there exists $\delta>0$ such that for every $\{E_i\}_{i}\in\pt{}{\delta}$,  $\displaystyle\sum_{i}\omega(f,E_i)<\varepsilon\cdot K^2 ...(4)$. Now for all $i\in\N$ and for all $x,y\in E_i,\left|\frac{1}{f}(x)-\frac{1}{f}(y)\right|=\frac{| f(y)-f(x)|}{| f(x)|\cdot| f(y)|}\leq\frac{1}{K^2}| f(x)-f(y)|\implies \sup \left\{\left|\frac{1}{f}(x)-\frac{1}{f}(y)\right|:x,y\in E_i\right\}\leq \frac{1}{K^2}\cdot \sup \{| f(x)-f(y)|:x,y\in E_i\}\implies\omega(\frac{1}{f},E_i)\leq\tfrac{1}{K^2}\cdot\omega(f,E_i)$ for all $i\in\N\implies\displaystyle\sum_{i}\omega(\tfrac{1}{f},E_i)\leq\tfrac{1}{K^2}\cdot\sum_{i}\omega(f,E_i)<\tfrac{1}{K^2}\cdot\varepsilon\cdot K^2=\varepsilon$ (by (4)). Hence $\frac{1}{f}\in AC(X)$.
	
	(v) Since $f\in AC(X)$, for every $\varepsilon>0$ there exists $\delta>0$ such that for every $\{E_i\}_i\in\pt{}{\delta},$  $\displaystyle\sum_{i}\omega(f,E_i)<\varepsilon$. Now for all  $i\in\N$ and for all $x,y\in E_i$, $\big|| f|(x)-| f|(y)\big| = \big|| f(x)|-| f(y)|\big|\leq| f(x)-f(y)|\implies \sup \Big\{\big|| f|(x)-| f|(y)\big|:x,y\in E_i\Big\}\leq \sup \{| f(x)-f(y)|:x,y\in E_i\}$  $\implies\omega(| f|,E_i)\leq\omega(f,E_i)$ for all $i\in\N\implies\displaystyle\sum_{i}\omega(| f|,E_i)\leq\sum_{i}\omega(f,E_i)<\varepsilon\implies$  $| f|\in AC(X)$.
\end{proof}

\begin{Not}
	If $f,g\in AC(X),| g|\geq K>0$ on $X$ and $f$ is bounded on $X$, then $\frac{f}{g}\in AC(X)$ (follows from (iii) and (iv) of Theorem \ref{t:ac}).
\end{Not}

\begin{Cor}
	$ (AC(X),+,.) $ forms a vector space over $\K$. If $AC^b(X)$ denotes the collection of all bounded absolutely continuous functions on $X$, then $ AC^b(X) $ is a ring and algebra over $ \K $.
\end{Cor}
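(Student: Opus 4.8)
The plan is to derive everything from Theorem \ref{t:ac} together with the elementary fact that the pointwise algebraic operations on $\K$-valued functions automatically satisfy the ring and algebra axioms, so that the only thing one really has to verify is closure.

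First I would observe that $AC(X)$ sits inside $\K^X$, the vector space over $\K$ of all $\K$-valued functions on $X$ equipped with pointwise addition and scalar multiplication. The zero function lies in $AC(X)$ (indeed $\omega(0,D_i)=0$ for every $D_i$, so the defining condition of Definition \ref{d:ac} holds with any $\delta$), hence $AC(X)\neq\emptyset$; and by parts (i) and (ii) of Theorem \ref{t:ac}, $AC(X)$ is closed under addition and under multiplication by scalars from $\K$. Therefore $AC(X)$ is a linear subspace of $\K^X$, and in particular a vector space over $\K$.

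Next I would treat $AC^b(X)=AC(X)\cap\{f:X\to\K\ :\ f\text{ is bounded}\}$. It is again a subspace of $\K^X$: it contains $0$; if $f,g\in AC^b(X)$ then $f+g$ is bounded (a sum of bounded functions is bounded) and absolutely continuous by Theorem \ref{t:ac}(i); and if $f\in AC^b(X)$, $\alpha\in\K$, then $\alpha f$ is bounded and absolutely continuous by Theorem \ref{t:ac}(ii). For the multiplicative structure, if $f,g\in AC^b(X)$ then $f\cdot g$ is bounded (a product of bounded functions is bounded) and absolutely continuous by Theorem \ref{t:ac}(iii), so $AC^b(X)$ is closed under pointwise multiplication; associativity, commutativity and the distributive laws are inherited verbatim from $\K^X$ because all operations are computed pointwise, and the constant function $1$ lies in $AC^b(X)$, so $(AC^b(X),+,\cdot)$ is a (commutative, unital) ring. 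Finally, since multiplication on $\K^X$ is bilinear and satisfies $\alpha(fg)=(\alpha f)g=f(\alpha g)$ for all $\alpha\in\K$, these identities persist on the subspace $AC^b(X)$, so the ring $AC^b(X)$ together with its $\K$-vector space structure is an associative algebra over $\K$.

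There is essentially no obstacle here: all the analytic content has already been absorbed into Theorem \ref{t:ac}, and what remains is the routine verification that a nonempty subset of $\K^X$ closed under the relevant operations inherits the vector space, ring and algebra axioms. The only points deserving explicit mention are that the zero and unit functions are absolutely continuous and that boundedness is preserved under sums, scalar multiples and products, each of which is immediate.
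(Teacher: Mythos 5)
Your proposal is correct and matches the paper's intent exactly: the paper states this corollary without proof, treating it as an immediate consequence of Theorem \ref{t:ac}, and your argument simply fills in the routine details (closure via parts (i)--(iii), membership of the zero and constant functions, preservation of boundedness, and pointwise inheritance of the axioms from $\K^X$). Nothing is missing.
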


\begin{Def}
	Let $(X,\ta,\M,m)$ be a tms. Then $ m $ is said to be \textit{C-outer regular} if for every connected $ \M $-measurable set $ C $ with $ m(C)=0 $ and any $ \varepsilon>0 $, there is an open connected set $ U\supseteq C $ such that $ m(U)<\varepsilon $.
\end{Def}

\begin{Th}
	Let $(X,\ta,\M,m)$ be a tms, where $ m $ is C-outer regular. Let $f:X\rightarrow \K$ be an absolutely continuous function. If $ E $ is a connected $ \M $-measurable subset of $X$ of measure 0, then $\lambda(f(E))=0$ where $\lambda$ is the Lebesgue measure on $\K$.
\label{t:con}\end{Th}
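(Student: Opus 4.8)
The plan is to exploit $C$-outer regularity to trap $E$ inside a single open connected set of arbitrarily small measure, and then apply the definition of absolute continuity to that one-element family.

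Fix $\varepsilon>0$. Since $f$ is absolutely continuous, Definition \ref{d:ac} supplies $\delta>0$ such that $\sum_i\omega(f,D_i)<\varepsilon$ for every $\{D_i\}_i\in\pt{}{\delta}$. The set $E$ is connected and $\M$-measurable with $m(E)=0<\delta$, so $C$-outer regularity of $m$ produces an open connected set $U$ with $E\subseteq U$ and $m(U)<\delta$. The singleton family $\{U\}$ is a finite (hence countable) family of pairwise disjoint open connected sets with $m(U)<\delta$, so $\{U\}\in\pt{}{\delta}$; invoking absolute continuity with this family gives $\omega(f,U)<\varepsilon$. Because $E\subseteq U$, we get $\omega(f,E)=\sup\{|f(x)-f(y)|:x,y\in E\}\leq\omega(f,U)<\varepsilon$. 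As $\varepsilon>0$ was arbitrary, $\omega(f,E)=0$, i.e.\ $f$ is constant on $E$, so $f(E)$ is empty or a single point, and in either case $\lambda(f(E))=0$.

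The whole argument is essentially this one move, so I do not expect a serious obstacle: the substantive content has been pushed into the hypothesis that $m$ is $C$-outer regular, which is precisely what allows a connected null set to be squeezed inside arbitrarily small open connected neighbourhoods. The only points that need a moment's care are (a) checking that a singleton is a bona fide member of $\pt{}{\delta}$, so that Definition \ref{d:ac} can legitimately be applied to it, and (b) disposing of the degenerate cases where $E$ is empty or a single point, which are immediate. One could also remark, as a byproduct, that the conclusion is in fact stronger than stated: $f$ is constant on every connected $\M$-measurable null set.
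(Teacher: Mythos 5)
Your proof is correct and follows essentially the same route as the paper's: both use C-outer regularity to enclose $E$ in a single open connected set $U$ with $m(U)<\delta$ and then apply the absolute-continuity estimate to the one-element family $\{U\}$ to get $\omega(f,U)<\varepsilon$. The only difference is the finish --- the paper bounds $f(E)$ inside the ball $B(f(a),\varepsilon)$ and lets $\varepsilon\to 0$ to conclude $\lambda^*(f(E))=0$, whereas you push one step further to $\omega(f,E)=0$, i.e.\ constancy of $f$ on $E$, which is precisely the content of the paper's Corollary \ref{c:cons} (and your version obtains it directly for $\K=\C$ as well).
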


\begin{proof}
	Since $f\in AC(X),$ for every $\varepsilon>0,\ \exists\,\delta>0$ such that for every $\{A_i\}_i\in\pt{}{\delta},$  $\displaystyle\sum_{i}\omega(f,A_i)<\varepsilon$. $E$ is a connected subset of $X$ with $m(E)=0$. So $ m $ being C-outer regular, there is an open connected set $ U\supseteq E $ such that $ m(U)<\delta $. Therefore, by definition of absolute continuity of $f,\ \omega(f,U)<\varepsilon\implies \sup \{| f(x)-f(y)|: x,y\in U\}<\varepsilon$  $\implies| f(x)-f(y)|<\varepsilon$ for all $x,y\in E\implies| f(x)-f(a)|<\varepsilon$ for all $x\in E$ and for some $a\in E\implies f(x)\in B(f(a),\varepsilon)$ for all $x\in E$. Therefore $f(E)\subseteq B(f(a),\varepsilon)\implies\lambda^*(f(E))\leq\lambda^*(B(f(a),\varepsilon))=\lambda(B(f(a),\varepsilon))$ \big[$\lambda^*$ is the Lebesgue outer measure on the power set $\mathscr{P}(\K)$ and $\lambda$ is the Lebesgue measure on Lebesgue $\sigma$-algebra $\mathscr{L}(\K)$ of $\K$\big]. Now if $ \K=\R $ then $ \lambda(B(f(a),\varepsilon))=2\varepsilon $ and if $ \K=\C $ then $ \lambda(B(f(a),\varepsilon))=\pi\varepsilon^2 $.	
	Since $\varepsilon>0$ is arbitrary, we have $\lambda^*(f(E))=0$ and consequently, $\lambda(f(E))=0$ [since $(\K,\mathscr{L}(\K),\lambda)$ is a complete measure space].
\end{proof}

\begin{Cor}
	Let $(X,\ta,\M,m)$ be a tms ($ m $ being C-outer regular) and $f$ be  a real-valued absolutely continuous function on $ X $. Then for any connected $ \M $-measurable set $ E $ in $ X $ with $ m(E)=0 $, $ f $ is constant on $ E $.
\label{c:cons}\end{Cor}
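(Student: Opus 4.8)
The plan is to read this off almost immediately from Theorem~\ref{t:con} together with two elementary observations: that $f(E)$ is an interval, and that intervals of Lebesgue measure zero are trivial. First I would dispose of the trivial case $E=\emptyset$, for which there is nothing to prove. So assume $E\neq\emptyset$.

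Next I would invoke continuity of $f$. By Remark~\ref{rm:cont}, an absolutely continuous function on a tms is uniformly continuous, hence continuous; since $E$ is connected, $f(E)$ is a connected subset of $\R$, i.e. a (possibly degenerate, but non-empty) interval. Then I would apply Theorem~\ref{t:con}: as $m$ is C-outer regular, $E$ is a connected $\M$-measurable set with $m(E)=0$, and $f$ is absolutely continuous, we get $\lambda(f(E))=0$, where $\lambda$ is the Lebesgue measure on $\R$.

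Finally I would combine the two: a non-empty interval $I\subseteq\R$ with $\lambda(I)=0$ must be a singleton, since any interval containing two distinct points $u<v$ contains $[u,v]$ and hence has measure at least $v-u>0$. Therefore $f(E)$ is a single point, which is precisely the statement that $f$ is constant on $E$.

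There is essentially no obstacle here; the corollary is a direct packaging of Theorem~\ref{t:con}. The only point worth stating carefully is that $f(E)$ is an interval, which uses continuity of $f$ (via Remark~\ref{rm:cont}) and connectedness of $E$; the rest is the measure-zero conclusion of Theorem~\ref{t:con} plus the elementary fact that a non-degenerate interval has positive length.
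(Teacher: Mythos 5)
Your argument is exactly the paper's: continuity from Remark~\ref{rm:cont} makes $f(E)$ a connected subset of $\R$, Theorem~\ref{t:con} forces $\lambda(f(E))=0$, and a nonempty interval of measure zero is a singleton. The proposal is correct and only spells out the last elementary step in more detail than the paper does.
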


\begin{proof}
	$ f $ being absolutely continuous is continuous (by Remark \ref{rm:cont}) and hence $ f(E) $ is a connected set in $ \R $. So from Theorem \ref{t:con} it follows that $ f(E) $ is a singleton set in $ \R $.
\end{proof}

\begin{Ex}
	(i) For the tms $(\R,\ta_u,\mathscr{L}(\R),\lambda)$, the function $f:\R\rightarrow \R$ defined by $f(x):=x,\, x\in\R$ is an absolutely continuous function. In fact, for any open connected set $A$, $\omega(f,A)= \sup \{| f(x)-f(y)|:x,y\in A\}= \sup \{| x-y|:x,y\in A\}=\lambda(A)$. Let $\varepsilon>0$ be arbitrary and $\{A_i\}_i\in\pt{}{\varepsilon
	}$. Therefore $\displaystyle\sum_{i}\omega(f,A_i)=\sum_{i}\lambda(A_i)<\varepsilon$. Consequently, $f$ is absolutely continuous.

Clearly, every constant function on $ \R $ is absolutely continuous.

For any bounded above interval $ I $ in $ \R $, any polynomial over $ I $ with coefficients from $ \K $ is absolutely continuous, by Theorem \ref{t:ac} (i), (ii) and (iii).

(ii) For the tms $(\R,\ta_u,\mathscr{L}(\R),\lambda)$, the function $f:\R\rightarrow\R$ defined by $f(x):=\sin x,$ $\forall\,x\in\R$ is absolutely continuous. To justify this, let $A$ be an open connected subset of $\R$ of arbitrarily small length. Then $| f(x)-f(y)|=|\sin x-\sin y|=2|\cos(\frac{x+y}{2})\sin(\frac{x-y}{2})|\leq 2|\frac{x-y}{2}|=| x-y|$, for all $x,y\in A$. Thus $\omega(f,A)=\sup\{| f(x)-f(y)|:x,y\in A\}\leq \sup \{| x-y|: x,y\in A\}=\lambda(A)$. Let $\varepsilon>0$ be arbitrary and $\{A_i\}_i\in\pt{}{\varepsilon}$. Then $\displaystyle\sum_{i}\omega(f,A_i)\leq\sum_{i}\lambda(A_i)<\varepsilon$. Hence $f$ is absolutely continuous.
	
	Similarly, $f(x):=\cos x,\forall\, x\in\R$ is absolutely continuous.
	
	(iii) For the tms $([a,b],\ta_{ab},\mathscr{L}([a,b]),\lambda)$, every absolutely continuous function with respect to standard definition of absolutely continuity is also absolutely continuous with respect to Definition \ref{d:ac} (since by Theorem \ref{t:compareac} both the definitions are equivalent).
	
	(iv) For the tms $(S^1,d,\mathfrak M,\mu)$ [By Theorem \ref{t:S1tms}], the function $f:S^1\rightarrow\C$ defined by $f(z):=z,\, z\in S^1$ is an absolutely continuous function. In fact,
	if $C$ is any open connected subset of $S^1$ then $\omega(f,C)=\sup \{| f(z_1)-f(z_2)|: z_1,z_2\in C \} = \sup \{| z_1-z_2|:z_1,z_2\in C\}\leq\sup \{d( z_1,z_2):z_1,z_2\in C\}=\text{diam}(C)=\mu(C)$ [since with respect to the metric $ d $, arcs on $ S^1 $ without endpoints are the only open connected sets]. Let $\varepsilon>0$ be arbitrary and $\{C_i\}_i\in\pt{}{\varepsilon}$ be arbitrary. Then $\displaystyle\sum_{i}\omega(f,C_i)\leq\sum_{i}\mu(C_i)<\varepsilon$. Therefore $f$ is absolutely continuous. [Here $ S^1 $ is written as $ \{z\in\C:|z|=1\} $].
	
	(v) In the above example (iv), if we replace $ S^1 $ by the complex plane $ \C $ equipped with the usual metric $ d_2 $ and $ \mathscr{A},\mu $ are constructed using $ d_2 $, as explained in Theorem
	\ref{t:mtms}, then $ (\C,d_2,\mathscr{A},\mu) $ is a tms. Now the function $f(z):=z,\, z\in \C$ is an absolutely continuous function. In fact,
	if $C$ is any open connected subset of $\C$ then, $\omega(f,C)=\sup \{| f(z_1)-f(z_2)|: z_1,z_2\in C \} = \sup \{| z_1-z_2|:z_1,z_2\in C\}=\text{diam}(C)\leq\mu(C)$ [by construction of $\mu$ as in theorem \ref{t:outer}]. Then by similar argument as in (iv), our claim is justified.
	
	(vi) For the tms $(\R^2,\ta_u,\mathscr{L}(\R^2),\lambda)$, the function $f:\R^2\rightarrow\R$ defined by $f(x,y):=x,$ $\forall\,(x,y)\in\R^2$ is not absolutely continuous. In fact, if it is absolutely continuous then by Corollary \ref{c:cons}, $ f $ should be constant on every connected subset of $ \R^2 $ with measure 0. Here any straight line in $ \R^2 $ is a connected set having Lebesgue measure 0. Clearly $ f $ is non-constant on any non-vertical line. Thus projection maps are not absolutely continuous, if the underlying measure is the Lebesgue measure. Here it is easy to verify that Lebesgue measure is C-outer regular.
	
	By the similar argument we can say that the function $f(x,y):=\sin xy,$ $\forall\,(x,y)\in\R^2$ is not absolutely continuous.
	
	(vii) If we consider the tms $ (\R^2,d_2,\mathscr{A},\mu) $, $ d_2 $ being the usual metric on $ \R^2 $ and $\mathscr{A},\mu$ being as in Theorem \ref{t:mtms}, then $f(x,y):=x,$ $\forall\,(x,y)\in\R^2$ is absolutely continuous. This follows since for any open connected set $ C $ in $ \R^2 $, $ \omega(f,C)=\sup\{|f(x,y)-f(x',y')|:(x,y),(x',y')\in C\}=\sup\{|x-x'|:(x,y),(x',y')\in C\}\leq\text{diam}(C)\leq\mu(C) $.
	
	(viii) For the tms $(\R,\ta_u,\mathscr{L}(\R),\lambda)$, the function $f:\R\rightarrow\R$ defined by $f(x):=x^2$, $x\in\R$ is not uniformly continuous and hence is not absolutely continuous.
\label{e:ac}\end{Ex}
\begin{Not}
	Example \ref{e:ac} (vi) shows that Lebesgue measure fails to make projection maps and sine function absolutely continuous on $\R^n (n\geq 2) $ despite of preserving the intrinsic sense of absolute continuity. The main reason for such situation is the existence of zero measurable connected arcs in Lebesgue $\sigma$-algebra on $\R^n ( n\geq 2 ) $. It excludes a rich subclass of continuous functions on $\R^n ( n\geq 2 ) $ from studying their absolute continuity. This situation can be improved if the measure and $\sigma$-algebra as mentioned in Theorem \ref{t:mtms} is considered. A discussion on absolute continuity of projection map on $\R^2$ with that referred measure and $\sigma$-algebra is presented in Example \ref{e:ac} (vii).\label{n:lebesguefail}
\end{Not}

\begin{Ex}
	For the tms $(\R,\ta_u,\mathscr{L}(\R),\lambda)$, we define a function \\  \centerline{$ f(x):= \begin{cases} 			
		x\sin(\frac{1}{x}), & \text{if } x\neq 0\\
		0 ,& \text{if } x=0
		\end{cases} $}
	 First, we prove that $f$ is not absolutely continuous on tms $(0,1)$ with respect to Definition \ref{d:ac}. On the contrary, let $f$ be absolutely continuous on $(0,1)$ with respect to Definition \ref{d:ac}. Then for any $\varepsilon>0$ there exists a $\delta>0$ such that for any $\{E_i\}_i\in\pt{}{\delta}$, we have $\displaystyle\sum_{i=1}^\infty\omega(f,E_i)<\varepsilon.$ Since $E_i\subseteq(0,1)$ for all $i\in\N$ and $E_i$'s are open connected, $E_i$'s are of the form $(a_i,b_i)$ where $0\leq a_i<b_i\leq 1$ for all $i\in\N$. Hence $\displaystyle\sum_{i=1}^\infty|b_i-a_i|<\delta\implies\sum_{i=1}^n|b_i-a_i|<\delta$, $\forall\, n\in\N$ and this implies $\displaystyle\sum_{i=1}^\infty\omega(f,(a_i,b_i))<\varepsilon\implies\sum_{i=1}^n\omega(f,(a_i,b_i))<\varepsilon,\forall n\in\N ...(1)$. Since $f$ is continuous on $[0,1]$, $|f(a_i)-f(b_i)|\leq\omega(f,(a_i,b_i))$ for all $i\in\N$. Therefore $\displaystyle\sum_{i=1}^{n}|a_i-b_i|<\delta\implies\sum_{i=1}^{n}|f(a_i)-f(b_i)|\leq\sum_{i=1}^{n}\omega(f,(a_i,b_i))<\varepsilon$ for all $n\in\N$ (from $(1)$). Since $\{(a_i,b_i)\}_{i=1}^n$ is any finite family of open subintervals of [0,1] for every $n\in\N$, we can conclude that $f$ is absolutely continuous on $[0,1]$ with respect to standard definition ------ which is a contradiction. Hence our assumption is wrong i.e. $f$ is not absolutely continuous on $(0,1)$ with respect to Definition \ref{d:ac}. 
		
		Now, suppose $f$ is absolutely continuous on $\R$ with respect to Definition \ref{d:ac}. For $(0,1)$ being an open subtms of $\R$, $f$ is absolutely continuous on $(0,1)$ with respect to Definition \ref{d:ac}, by Theorem \ref{t:openabs} ------ which is a contradiction by above discussion. Therefore $f$ is not absolutely continuous on tms $\R$.\qed\label{e:notabs} 
\end{Ex}
\begin{Not}
	The function studied in Example \ref{e:notabs} is of immense importance. This is because the referred function is uniformly continuous on $\R$ but not absolutely continuous on tms $\R$ with respect to Definition \ref{d:ac}. This says that converse of Theorem \ref{t:absunif} does not hold in general.
\end{Not}

\begin{Ex}
	Consider the tms $(\R,\ta_u,\mathscr{L}(\R),\lambda)$ and let $f\in L^1(\R)$. Define\\ $F(x):=\displaystyle\int_{[-| x|,| x|]}f(t)dt, \forall\,x\in\R$ [Here `$ dt $' denotes element of Lebesgue measure $ \lambda $]. Then $F$ is an absolutely continuous function on $\R$.\label{e:int}

\noindent{\bf Justification :} Since $f\in L^1(\R)$, $F$ is well-defined. For each $n\in\N$, define\\  \centerline{$ \phi_n(t):= \begin{cases} 
|
f(t)|, & \text{if } | f(t)|\leq n\\
n ,& \text{otherwise }
 \end{cases} $}
Then each $\phi_n$ is a measurable function such that $\displaystyle\lim_{n\to\infty}\phi_n(x)=|f(x)|$ for all $x\in\R$. Also $\phi_n\leq| f|$ for all $n\in\N$. Since $f\in L^1(\R),$ by dominated convergence theorem (DCT), $\displaystyle\lim_{n\to\infty}\int_{\R}\big|| f|-\phi_n\big| dt=0$. Hence for every $\varepsilon>0,\exists\, p\in\N$ such that $\displaystyle\int_{\R}\big|| f| - \phi_n\big| dt<\tfrac{\varepsilon}{4}$ for all $ n\geq p$ ...(i)\\ Choose $\delta=\frac{\varepsilon}{4\cdot p}>0$ and let $\{E_i\}_i\in\pt{}{\delta}$. Now for all $i\in\N$ and for any $x,y\in E_i$ with $| x|<| y|,$ we have $| F(x)-F(y)|=\left|\displaystyle\int_{[-| x|,| x|]} f dt-\int_{[-| y|,| y|]} fdt \right|\leq \displaystyle\int_{I_1\cup I_2}| f|dt$, where $I_1 = [-| y|,-| x|),I_2 = (| x|,| y|]$. Also $\displaystyle\int_{I_1\cup I_2}| f| dt\leq\int_{E_i\cup(-E_i)}| f|dt\leq\int_{E_i}| f|dt +\int_{(-E_i)}| f|dt$. Hence $| F(x)-F(y)|\leq\displaystyle\int_{E_i}| f|dt + \int_{(-E_i)}| f|dt$ for any choice of $x,y\in E_i$, $i\in\N\implies\omega(F,E_i)\leq\displaystyle\int_{E_i}| f|dt+\int_{(-E_i)}| f|dt$  for all $i\in\N$ ...(ii)\\
Now $\displaystyle\sum_{i}\int_{E_i}| f|dt + \sum_{i}\int_{(-E_i)}| f|dt\leq\sum_{i}\int_{E_i}\big|| f|-\phi_{p}\big|dt$ + $\displaystyle\sum_{i}\int_{(-E_i)}\big|| f|-\phi_{p}\big|dt$ + $\displaystyle\sum_{i}\int_{E_i}| \phi_{p}|dt$ + $\displaystyle\sum_{i}\int_{(-E_i)}| \phi_{p}|dt $  $ \leq\displaystyle\int_{\cup E_i}\big|| f|-\phi_{p}\big|dt + \int_{\cup(-E_i)}\big|| f|-\phi_{p}\big|dt+\sum_{i}p\cdot \lambda(E_i) + \sum_{i}p\cdot \lambda(-E_i)$ [By monotone convergence theorem (MCT)] $\leq 2\cdot\displaystyle \int_{\R}\big|| f|-\phi_{p}\big|dt + 2\cdot\sum_{i}p\cdot \lambda(E_i) $...(iii) [Since $ \lambda(E_i)=\lambda(-E_i),\forall\,i\in\N $].\\ Therefore from (i), (ii) and (iii), it follows that $\displaystyle\sum_{i}\omega(F,E_i)< 2\cdot\tfrac{\varepsilon}{4}+2p\delta$ = $\tfrac{\varepsilon}{2}+2\cdot p\cdot\frac{\varepsilon}{4p} = \varepsilon$. Consequently, $F$ is absolutely continuous.\qed
\end{Ex}

\begin{Ex}
	Consider the tms $(\R,\ta_u,\mathscr{L}(\R),\lambda)$ and let $f\in L^1(\R)$. Define\\ $G(x):=\displaystyle\int_{(-\infty,x]}^{} f(t)dt,\forall\, x\in\R$. Then $G$ is an absolutely continuous function on $ \R $. \\
\noindent\textbf{Justification :} Since $f\in L^1(\R)$, $G$ is well-defined. Now for each $n\in\N,\,\phi_n$ is defined as in Example \ref{e:int} . Then by DCT, $\displaystyle\lim_{n\to\infty}\int_{\R}\big|| f|-\phi_n\big|dt=0$. Therefore for every $\varepsilon>0,\exists\,p\in\N$ such that $\displaystyle\int_{\R}\big|| f|-\phi_n\big|dt<\tfrac{\varepsilon}{2}$ for all $n\geq p$ ...(i)\\ Choose $\delta=\frac{\varepsilon}{2p}>0$ and let $\{E_i\}_i\in\pt{}{\delta}$. Now for each $i\in\N$ and for any $x,y\in E_i$, $| G(x)-G(y)|=\displaystyle\left|\int_{(-\infty,x]}fdt-\int_{(-\infty,y]}fdt\right|\leq\int_{E_i}| f|dt\implies\omega(G,E_i)\leq\int_{E_i}| f|dt,\forall\, i\in\N\implies$  $\displaystyle\sum_{i\in\N}\omega(G,E_i)\leq\sum_{i\in\N}\int_{E_i}| f|dt\leq\sum_{i\in\N}\int_{E_i}\big|| f|-\phi_p\big|dt + \sum_{i\in\N}\int_{E_i}\phi_pdt\leq\int_{\cup E_i}\big|| f|-\phi_p\big|dt + \sum_{i\in\N}p.\lambda(E_i)\leq
\int_{\R}\big|| f|-\phi_{p}\big|dt + p\cdot\sum_{i\in\N}\lambda(E_i)<\tfrac{\varepsilon}{2} + p\cdot\tfrac{\varepsilon}{2p}=\varepsilon$ (from (i)). Therefore $G$ is absolutely continuous.\qed
\label{e:int2}\end{Ex} 

\begin{Ex}
	For the tms $([0,1],\ta_{u},\mathscr{L}([0,1]),\lambda)$, the function $F(x):=\sqrt{x},\;x\in[0,1]$ is an absolutely continuous function. This is a standard  problem according to standard definition. Since the standard definition of absolute continuity is equivalent to our definition on $ [0,1] $ (by Theorem \ref{t:compareac}), our claim is justified.
\label{e:rootx1}\end{Ex}

\begin{Ex}
	For the tms $(\R,\ta_u,\mathscr{L}(\R),\lambda)$, the function $F:\R\rightarrow\R$ defined by \centerline{$ 	F(x):=\begin{cases}
		\sqrt{x},&\text{if } x\in(a,\infty),a>0\\
		0,&\text{otherwise }
	\end{cases} $}
	is absolutely continuous on $(a,\infty)$. \\
\noindent\textbf{Justification :} Let $f:\R\rightarrow\R$ be a function defined by
$ 	f(t):=\begin{cases}
	\frac{1}{2\sqrt{t}},& \mbox{if } t\geq a\\
	0,&\mbox{otherwise }
\end{cases} $\\
Then $f$ is a non-negative monotonically decreasing bounded function on $(a,\infty)$. Let $M :=\sup \{| f(t)|:t\in(a,\infty)\}>0$. Clearly \begin{align*}
F(x)=\left\{\begin{array}{cc}\displaystyle
\int_{(-\infty,x]}f(t)dt + \sqrt{a},\;& \mbox{if }x> a\\
0\;,&\mbox{otherwise}\end{array}\right.
\end{align*}
Let $\varepsilon>0$ be arbitrary. Choose $\delta=\frac{\varepsilon}{M}$. Let $\{E_i\}_i\in\pt{}{\delta}$ such that $ \displaystyle\bigcup_{i=1}E_i\subseteq (a,\infty) $. Now $\forall \,i\in \N$ and for any $x,y\in E_i$ with $x< y$, we have\\ $| F(x)-F(y)| = \left|\displaystyle\left\{\int_{(-\infty,x]}f\;dt + \sqrt{a}\right\}-\left\{\int_{(-\infty,y]}f\;dt + \sqrt{a}\right\}\right| = \displaystyle\left| \int_{(x,y]}^{}f\;dt\right|\leq\int_{E_i}| f|\;dt$. Hence $\omega(F,E_i)\displaystyle\leq\int_{E_i}| f|\;dt,\forall\, i\in\N\implies\displaystyle\sum_{i\in\N}\omega(F,E_i)\leq\sum_{i\in\N}\int_{E_i}| f|\;dt\leq\sum_{i\in\N}M\cdot\lambda(E_i)< M\cdot\tfrac{\varepsilon}{M}=\varepsilon$. Hence $F$ is absolutely continuous on $(a,\infty)$ for any $ a>0 $.\qed
\label{e:rootx2}\end{Ex}

\begin{Ex}
	For the tms $([0,\infty),\ta_u,\mathscr{L}([0,\infty)),\lambda)$ , the function $F(x):=\sqrt{x}$, $\forall\,x\in[0,\infty)$ is absolutely continuous on $[0,\infty)$.\\
\noindent\textbf{Justification :}
	$F(x)=\sqrt{x},\; x\geq0$ is absolutely continuous separately on $[0,1]$ and on $(1,\infty)$ (by Examples \ref{e:rootx1} and \ref{e:rootx2}). Now we prove absolute continuity of $F$ on entire $[0,\infty)$. Let $\varepsilon>0$ be arbitrary. Then $\exists\,\delta>0$ such that for every $\{D_i\}_i\in\pt{}{\delta}$ with $ \displaystyle\bigcup_iD_i\subseteq[0,1] $ and for every $\{S_i\}_i\in\pt{}{\delta}$ with $ \displaystyle\bigcup_iS_i\subseteq(1,\infty) $, $\displaystyle\sum_{i}\omega(F,D_i)<\tfrac{\varepsilon}{2}$, $\displaystyle\sum_{i}\omega(F,S_i)<\tfrac{\varepsilon}{2}$.
	
	 Let $\{E_i\}_i\in\pt{}{\delta}$ be arbitrary such that $ \displaystyle\bigcup_iE_i\subseteq[0,\infty) $.\\	
	\textbf{Case 1:} There exists exactly one $k\in\N$ such that $E_k\cap[0,1]\not= \emptyset$ and $E_k\cap(1,\infty)\not=\emptyset$. Now $\omega(F,E_k)= \sup \{| F(x)-F(y)|:x,y\in E_k\} = \sup \{| F(x)-F(y)|: x,y\in E_k\cap [0,1]\} + \sup \{| F(x)-F(y)|: x,y\in E_k\cap(1,\infty)\}$ (since F is a monotone function with no point of discontinuity) ...(i).\\ Let $\varLambda_1:=\{i:E_i\subseteq[0,1]\},\varLambda_2:=\{i:E_i\subseteq(1,\infty)\}$. Then $\displaystyle\sum_{i\in\varLambda_1}\lambda(E_i)$ + $\displaystyle\lambda(E_k\cap[0,1])\leq\sum_{i}\lambda(E_i)<\delta\implies\sum_{i\in\varLambda_1}\omega(F,E_i) $ + $ \omega(F,E_k\cap[0,1])<\frac{\varepsilon}{2}$ ...(ii).\\ Also $\displaystyle\sum_{i\in\varLambda_2}\lambda(E_i)$ + $ \lambda(E_k\cap(1,\infty))\leq\displaystyle\sum_{i}\lambda(E_i)<\delta\implies\displaystyle\sum_{i\in\varLambda_2}\omega(F,E_i)$ + $ \omega(F,E_k\cap(1,\infty))<\frac{\varepsilon}{2}$ ...(iii). Therefore from (i),(ii) and (iii), we obtain $\displaystyle\sum_{i}\omega(F,E_i)<\varepsilon$.\\
	\textbf{Case 2:} No $E_i$ meets both $[0,1]$ and $(1,\infty)$. Let $\varLambda_1$ and $\varLambda_2$ be defined as in Case 1. Then $\displaystyle\sum_{i}\omega(F,E_i)=\sum_{i\in\varLambda_1}\omega(F,E_i) + \sum_{i\in\varLambda_2}\omega(F,E_i)<\tfrac{\varepsilon}{2} + \tfrac{\varepsilon}{2}=\varepsilon$.
		
	Hence $F$ is absolutely continuous on $[0,\infty)$.\qed
	\label{e:rootx3}\end{Ex}

From the previous discussions, we can easily further generalise the concept of absolutely continuous function as follows.

\begin{Def}
	Let $(X,\ta,\M,m)$ be a tms. A function $f:X\rightarrow\K^n\ (n\in\N,\K$ is the field of real or complex numbers) is said to be an \textit{absolutely continuous function} if for every $\varepsilon>0$, $\exists\,\delta>0$ such that for any $\{D_{i}\}_{i}\in\pt{}{\delta}$ we have $\displaystyle\sum_{i}\omega(f,D_{i})<\varepsilon$, where\\ $\omega(f,D_{i}):= \sup\big\{\| f(x)-f(y)\| : x,y\in D_{i}\big\}$ for all $i$.
	\label{d:knac}
\end{Def}

\begin{Def}
	Let $(X,\ta,\M,m)$ be a tms and $ N $ be normed linear space over $ \K $. A function $f:X\rightarrow N$ is said to be an \textit{absolutely continuous function} if for every $\varepsilon>0$, $\exists\,\delta>0$ such that for any $\{D_{i}\}_{i}\in\pt{}{\delta}$ we have $\displaystyle\sum_{i}\omega(f,D_{i})<\varepsilon$, where\\ $\omega(f,D_{i}):= \sup\big\{\| f(x)-f(y)\| : x,y\in D_{i}\big\}$ for all $i$.
	\label{d:nlsac}
\end{Def}

Looking at the proof of the Theorem \ref{t:ac} we can easily have the following analogous theorem.

\begin{Th}
	Let $(X,\ta,\M,m)$ be a tms and $ N $ be normed linear space over $ \K $. Then\\
	(i) the collection $ AC(X,\K^n)\ [n\in\N] $ of all absolutely continuous functions from $ X $ to $ \K^n $ is a vector space over $ \K $.\\
		(ii) the collection $ AC(X,N)$ of all absolutely continuous functions from $ X $ to $ N$ is a vector space over $ \K $.
\end{Th}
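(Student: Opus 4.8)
The plan is to identify each of the two collections as a linear subspace of the (obvious) vector space of \emph{all} functions from $X$ into the relevant codomain, so that it suffices to verify that each collection is nonempty and closed under pointwise addition and under scalar multiplication by elements of $\K$. Moreover, since $\K^n$ is itself a normed linear space over $\K$ (say with the Euclidean norm), part (i) is precisely the instance $N=\K^n$ of part (ii); so I would prove (ii) and read off (i). Throughout, the computations are the verbatim analogues of the proofs of Theorem \ref{t:ac}(i)--(ii), with the modulus $|\cdot|$ on $\K$ replaced by the norm $\|\cdot\|$ on $N$.

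For nonemptiness, the constant function $0$ satisfies $\omega(0,D_i)=0$ for every $i$, hence $\sum_i\omega(0,D_i)=0<\varepsilon$ for every $\varepsilon>0$ with any $\delta>0$; so $0\in AC(X,N)$. For closure under addition, given $f,g\in AC(X,N)$ and $\varepsilon>0$, choose a single $\delta>0$ (the smaller of the two) that serves $f$ with tolerance $\tfrac{\varepsilon}{2}$ and $g$ with tolerance $\tfrac{\varepsilon}{2}$; then for any $\{E_i\}_i\in\pt{}{\delta}$ and any $x,y\in E_i$, the triangle inequality in $N$ gives $\|(f+g)(x)-(f+g)(y)\|\le\|f(x)-f(y)\|+\|g(x)-g(y)\|$, whence by subadditivity of suprema $\omega(f+g,E_i)\le\omega(f,E_i)+\omega(g,E_i)$, and summing over $i$ yields $\sum_i\omega(f+g,E_i)\le\sum_i\omega(f,E_i)+\sum_i\omega(g,E_i)<\varepsilon$. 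For closure under scalar multiplication, given $f\in AC(X,N)$, $\alpha\in\K$ and $\varepsilon>0$, pick $\delta>0$ serving $f$ with tolerance $\tfrac{\varepsilon}{|\alpha|+1}$; then for $\{E_i\}_i\in\pt{}{\delta}$ and $x,y\in E_i$ one has $\|(\alpha f)(x)-(\alpha f)(y)\|=|\alpha|\,\|f(x)-f(y)\|$, so $\omega(\alpha f,E_i)=|\alpha|\,\omega(f,E_i)$ by homogeneity of suprema, giving $\sum_i\omega(\alpha f,E_i)=|\alpha|\sum_i\omega(f,E_i)<|\alpha|\cdot\tfrac{\varepsilon}{|\alpha|+1}<\varepsilon$. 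Hence $AC(X,N)$ is a $\K$-subspace of $N^X$.

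I do not anticipate any genuine obstacle: the proof is a transcription of the earlier argument, and the only steps needing a line of justification are the two elementary facts about suprema used above, namely $\sup\{a_i+b_i:\cdot\}\le\sup\{a_i:\cdot\}+\sup\{b_i:\cdot\}$ and $\sup\{|\alpha|c_i:\cdot\}=|\alpha|\sup\{c_i:\cdot\}$. The passage from (ii) to (i) is immediate once a norm on $\K^n$ is fixed, and the choice of norm is irrelevant since all norms on $\K^n$ are equivalent and the absolutely continuous class is unaffected by passing to an equivalent norm.
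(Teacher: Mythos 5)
Your proposal is correct and follows the same route the paper intends: the paper simply asserts that the theorem is obtained by repeating the computations of Theorem \ref{t:ac}(i)--(ii) with the modulus replaced by the norm of $N$, which is exactly what you carry out. Your additional observations --- that (i) is the case $N=\K^n$ of (ii), and that the choice of (equivalent) norm on $\K^n$ is immaterial --- are sound and only make the argument more complete.
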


 In the next section we shall define another class of functions on tms which enriches the collection of absolutely continuous functions.

\section{Locally Lipschitz function on tms}

 Here we have introduced the concept of locally Lipschitz functions on tms. Its association with absolutely continuous functions has been analysed. Further, we have observed the equivalence of boundedness and absolute continuity of linear functionals on separable normed linear space which is a tms as explained in Remark \ref{rm:nlstms}. Also, if the co-domain of absolutely continuous functions is enlarged from $\K$ ($\K$ is the field of real or complex numbers) to any normed linear space, the notions pertinent to boundedness and absolute continuity produce same classes of linear transformations on any separable normed linear space with the participation of the measure and $\sigma$-algebra as discussed in Theorem \ref{t:mtms}. 
 
\begin{Def}
	Let $(X,\ta,\M,m)$ be a tms. A function $f:X\rightarrow\K$ is called a \emph{locally Lipschitz function} if $\exists\,L>0$ such that for every open connected set $E$, $| f(x)-f(y)|\leq L\cdot m(E)$, for all $x,y\in E$. 
\end{Def}

\begin{Rem}
	Every Lipschitz function on $\R$ is locally Lipschitz on $(\R,\ta_{u},\mathscr{L}(\R),\lambda)$.
\label{rm:Lip}\end{Rem}

\begin{Th}
	Let $(X,\ta,\M,m)$ be a tms and $f$ and $g$ be two locally Lipschitz functions on $ X $ and $\alpha\in\K$. Then\\ (i) $f+g,\ \alpha\cdot f$ are locally Lipschitz.\\ (ii) $f\cdot g$ is locally Lipschitz, provided $f,g$ both are bounded functions on $X$.\\ (iii) $\frac{1}{f}$ is locally Lipschitz provided $| f|\geq k$, for some $k>0 $.\\ (iv) $| f|$ is locally Lipschitz.
\end{Th}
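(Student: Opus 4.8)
The plan is to mirror the proof of Theorem~\ref{t:ac}, noting that the present statement is actually \emph{easier}: the locally Lipschitz condition is a single inequality that must hold, with one fixed constant $L$, uniformly over every open connected set $E$, so there is no countable family $\{D_i\}_i$ and no $\varepsilon$--$\delta$ argument to run; in each part I would simply exhibit an explicit Lipschitz constant built from constants for $f$ and $g$. Thus I would begin by fixing $L_f,L_g>0$ with $|f(x)-f(y)|\le L_f\,m(E)$ and $|g(x)-g(y)|\le L_g\,m(E)$ for every open connected set $E$ and all $x,y\in E$, and keep these fixed throughout.

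For (i) I would take an arbitrary open connected $E$ and $x,y\in E$ and apply the triangle inequality: $|(f+g)(x)-(f+g)(y)|\le|f(x)-f(y)|+|g(x)-g(y)|\le(L_f+L_g)\,m(E)$, so $L_f+L_g$ witnesses that $f+g$ is locally Lipschitz; similarly $|(\alpha\cdot f)(x)-(\alpha\cdot f)(y)|=|\alpha|\,|f(x)-f(y)|\le|\alpha|L_f\,m(E)$, so $|\alpha|L_f$ works for $\alpha\cdot f$ (the case $\alpha=0$ being trivial since then $\alpha\cdot f$ is constant). For (iv) I would use the reverse triangle inequality: $\big||f|(x)-|f|(y)\big|=\big||f(x)|-|f(y)|\big|\le|f(x)-f(y)|\le L_f\,m(E)$, so $|f|$ is locally Lipschitz with the same constant $L_f$.

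For (ii) I would bring in boundedness: choose $M>0$ with $|f|\le M$ and $|g|\le M$ on $X$, and then on any open connected $E$ with $x,y\in E$ write $f(x)g(x)-f(y)g(y)=f(x)\big(g(x)-g(y)\big)+g(y)\big(f(x)-f(y)\big)$, whence $|(f\cdot g)(x)-(f\cdot g)(y)|\le M|g(x)-g(y)|+M|f(x)-f(y)|\le M(L_f+L_g)\,m(E)$; thus $M(L_f+L_g)$ is a Lipschitz constant for $f\cdot g$. For (iii) the hypothesis $|f|\ge k>0$ makes $f$ nowhere zero and forces $|1/f|\le 1/k$, and then $\big|\tfrac{1}{f}(x)-\tfrac{1}{f}(y)\big|=\dfrac{|f(y)-f(x)|}{|f(x)|\,|f(y)|}\le\dfrac{1}{k^2}|f(x)-f(y)|\le\dfrac{L_f}{k^2}\,m(E)$, so $L_f/k^2$ works for $1/f$.

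I do not expect a genuine obstacle here; the only substantive points are that boundedness in (ii) and the lower bound in (iii) are exactly what guarantees the claimed constants are finite, and dropping either hypothesis breaks the construction, just as for absolute continuity in Theorem~\ref{t:ac}. (One may also remark that whenever $m(E)=\infty$ every inequality above is vacuously true, so no finiteness assumption on $m$ enters.)
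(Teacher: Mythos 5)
Your proof is correct and is exactly the argument the paper intends: the paper's own proof is just the word ``Straightforward,'' and your explicit constants $L_f+L_g$, $|\alpha|L_f$, $M(L_f+L_g)$, $L_f/k^2$, and $L_f$ are the standard witnesses, mirroring Theorem~\ref{t:ac}. No gaps; the side remark handling $\alpha=0$ is a nice touch since the definition requires $L>0$.
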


\begin{proof}
	Straightforward.
\end{proof}

\begin{Cor}
	The collection of all locally Lipschitz functions on a tms is a vector space over $ \K $ and the collection of all bounded locally Lipschitz functions on a tms forms a commutative unitary ring as well as a commutative unital algebra over $ \K $.
\end{Cor}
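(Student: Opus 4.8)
The plan is to deduce both assertions from the preceding theorem together with the elementary fact that $\K^{X}$, the set of all $\K$-valued functions on $X$ under pointwise operations, is itself a commutative unital algebra over $\K$; thus it suffices to verify that the relevant subsets are closed under the operations in play and contain the required distinguished elements, all remaining axioms being inherited from $\K^{X}$.

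First I would check that the locally Lipschitz functions form a vector subspace of $\K^{X}$. The zero function is locally Lipschitz, since for any choice of $L>0$ and any open connected set $E$ we have $|0-0|=0\le L\cdot m(E)$ (as $m(E)\ge 0$), so the collection is non-empty; and part (i) of the theorem gives closure under addition and under multiplication by scalars from $\K$. Hence the collection is a subspace, in particular a vector space over $\K$.

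Next I would treat the bounded locally Lipschitz functions. This set is the intersection of the locally Lipschitz functions with the bounded functions, and the bounded functions already form a commutative unital subalgebra of $\K^{X}$; so the intersection is at once closed under addition and scalar multiplication and contains the zero function. For closure under products, given $f,g$ bounded and locally Lipschitz, the product $f\cdot g$ is bounded (a product of bounded functions is bounded) and is locally Lipschitz by part (ii) of the theorem. The constant function $1$ is bounded and locally Lipschitz (again $|1-1|=0\le L\cdot m(E)$ for every $L>0$), so it is a multiplicative identity. Commutativity, associativity, and distributivity of the pointwise product, together with the compatibility $\alpha\cdot(f\cdot g)=(\alpha\cdot f)\cdot g=f\cdot(\alpha\cdot g)$ for $\alpha\in\K$, are inherited from $\K$. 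Therefore the bounded locally Lipschitz functions form a commutative unital ring and a commutative unital algebra over $\K$.

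I do not expect any genuine obstacle: the whole substance of the statement is already packaged into parts (i) and (ii) of the preceding theorem, and the one point deserving an explicit (if trivial) remark is that the definition of locally Lipschitz insists on a strictly positive constant $L$, so one must observe that constant functions --- notably $0$ and $1$ --- still qualify, which they do because $m(E)\ge 0$ for every open connected set $E$. Parts (iii) and (iv) are not needed here, although they record that the class is stable under further natural operations.
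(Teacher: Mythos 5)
Your proposal is correct and follows the route the paper intends: the corollary is left as an immediate consequence of the preceding theorem's parts (i) and (ii), exactly as you argue, with the remaining algebra axioms inherited pointwise from $\K$. Your explicit observation that the constant functions $0$ and $1$ are locally Lipschitz (since $|c-c|=0\le L\cdot m(E)$) is a worthwhile, if minor, addition that the paper leaves implicit.
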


\begin{Th}
	Let $(X,\ta,\M,m)$ be a tms. Then every locally Lipschitz function on $X$ is absolutely continuous.
\label{t:lac}\end{Th}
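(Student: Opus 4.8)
The plan is to unwind the definition of locally Lipschitz directly into the definition of absolute continuity. Suppose $f:X\to\K$ is locally Lipschitz with constant $L>0$, so that $|f(x)-f(y)|\le L\cdot m(E)$ for every open connected set $E$ and all $x,y\in E$. Taking the supremum over $x,y\in E$ gives the pointwise-free estimate $\omega(f,E)\le L\cdot m(E)$ for every open connected $E$. This is the key inequality, and the rest is essentially formal.

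Now fix $\varepsilon>0$ and choose $\delta:=\dfrac{\varepsilon}{L+1}>0$ (adding $1$ to the denominator avoids any worry should one wish $L$ to be allowed to be $0$ — though $L>0$ by hypothesis — and keeps the inequality strict). Let $\{D_i\}_i\in\pt{}{\delta}$ be arbitrary, i.e.\ a countable family of pairwise disjoint open connected sets with $\sum_i m(D_i)<\delta$. Applying the key inequality to each $D_i$ and summing,
\[
\sum_i \omega(f,D_i)\ \le\ L\sum_i m(D_i)\ <\ L\cdot\delta\ =\ \frac{L}{L+1}\,\varepsilon\ <\ \varepsilon .
\]
Hence $f$ satisfies Definition \ref{d:ac}, so $f$ is absolutely continuous, completing the proof.

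I do not anticipate a genuine obstacle here; the statement is the ``easy direction'' linking the two notions, and the only thing to be careful about is that the defining inequality of locally Lipschitz is stated for \emph{every} open connected set (not merely for a neighbourhood base), so passing to $\omega(f,D_i)$ term by term is legitimate with no measure-zero or connectedness caveats. One should also note that the argument is insensitive to whether the family $\{D_i\}_i$ is finite or countably infinite, since it is just a comparison of non-negative series; and it applies verbatim with $\|\cdot\|$ in place of $|\cdot|$, giving the corresponding statement for $\K^n$-valued or normed-linear-space-valued locally Lipschitz functions as in Definitions \ref{d:knac} and \ref{d:nlsac}.
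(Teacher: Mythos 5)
Your proof is correct and follows essentially the same route as the paper's: pass from the pointwise Lipschitz bound to $\omega(f,E)\le L\cdot m(E)$ for every open connected $E$, then choose $\delta$ proportional to $\varepsilon/L$ and sum over the family. The only cosmetic difference is your choice $\delta=\varepsilon/(L+1)$ versus the paper's $\delta=\varepsilon/L$, which changes nothing of substance.
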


\begin{proof}
	Let $f:X\rightarrow\K$ be a locally Lipschitz function. Then $\exists\,L>0$ such that for every open connected set $E$, we have $| f(x)-f(y)|\leq L\cdot m(E)$, $\forall\,x,y\in E\implies$  $\omega(f,E)\leq L\cdot m(E)$  $\cdots\cdots$(1). Let $\varepsilon>0$ be arbitrary and $\{E_i\}_i\in\pt{}{\frac{\varepsilon}{L}}$ be arbitrary. Then $\displaystyle\sum_{i}m(E_i)<\tfrac{\varepsilon}{L}$  $\implies\displaystyle L\cdot \sum_{i} m(E_i)<\varepsilon$. Therefore from (1), $\displaystyle\sum_{i}\omega(f,E_i)\leq L\cdot\sum_{i}m(E_i)<\varepsilon$. Hence $f$ is an absolutely continuous function.
\end{proof}

\begin{Ex}
	(i) For the tms $(\R,\ta_u,\mathscr{L}(\R),\lambda),$ the functions $f(x):=x,$   $g(x):=\cos x,$   $h(x):=\sin x,\forall\, x\in\R$ all are locally Lipschitz functions (by Remark \ref{rm:Lip}).
	
	(ii) Consider the tms $([0,\infty),\ta_u,\mathscr{L}([0,\infty)),\lambda)$ and a function $f:[0,\infty)\rightarrow\K$ defined by $f(x):=\sqrt{x}$ for all $x\in[0,\infty)$. We prove that $f$ is not locally Lipschitz. On the  contrary, let us assume that $f$ is locally Lipschitz. Then $\exists\, M>0$ such that for every open connected set $E$, $| f(x)-f(y)|\leq M\cdot\lambda(E)$, for all $x,y\in E$ $ \implies\omega(f,E)\leq M\cdot\lambda(E) $. Let $E=\big(0,\frac{1}{(1+M)^2}\big)$. Then $ \omega(f,E)=\frac{1}{1+M} $ and $ \lambda(E)=\frac{1}{(1+M)^2}$  $\implies\frac{1}{1+M}\leq \frac{M}{(1+M)^2}\implies M+1\leq M$ ------ which is a contradiction. As a consequence, $f$ is not locally Lipschitz. \label{e:rt}\end{Ex}

\begin{Rem}
	Examples \ref{e:rootx3} and \ref{e:rt}(ii) justify that there exists absolutely continuous function which is not locally Lipschitz. 
\end{Rem}

\begin{Ex}
	Consider the separable normed linear space $ L^1(\R) $ which is a tms, by Remark \ref{rm:nlstms}. Here the $ \sigma $-algebra $ \mathscr{A}	 $ and the measure $ \mu $ are as described in Theorem \ref{t:mtms}. We define a map $ T:L^1(\R)\to\K $ by $ T(f):=\displaystyle\int_{\R}f(t)dt,\ \forall\,f\in L^1(\R) $. Since $ f\in L^1(\R) $, $ T $ is well-defined. Let $\mathcal{E} $ be an open connected set in $ L^1(\R) $. Then for any $ f,g\in \mathcal{E} $, $ |T(f)-T(g)|=\displaystyle\left|\int_{\R}f(t)dt-\int_{\R}g(t)dt\right|\leq\int_{\R}|f(t)-g(t)|dt=\|f-g\|_1\leq\text{diam}(\mathcal{E})\leq\mu(\mathcal{E}) $. Thus $ T $ is a locally Lipschitz function on $ L^1(\R) $ and hence is absolutely continuous on $ L^1(\R) $ [By Theorem \ref{t:lac}].
\end{Ex}

\begin{Ex}
	Consider the separable normed linear space $ L^p(\R^n)$ (where $n\in\N,p>1$) which is a tms, by Remark \ref{rm:nlstms}. Here the $ \sigma $-algebra $ \mathscr{A}$ and the measure $ \mu $ are as described in Theorem \ref{t:mtms}. For each $ h\,(\neq0)\in L^q(\R^n) $ \big(where $ q>1 $ such that $ \frac{1}{p}+\frac{1}{q}=1 $\big) we define a map $ T_h:L^p(\R^n)\to\K $ by $ T_h(f):=\displaystyle\int_{\R}f(t)h(t)dt,\ \forall\,f\in L^p(\R^n) $. Then $ \displaystyle\int_{\R}|f(t)h(t)|dt\leq\left(\int_{\R}|f(t)|^pdt\right)^{\frac{1}{p}}\cdot\left(\int_{\R}|h(t)|^qdt\right)^{\frac{1}{q}}=\|f\|_p\|h\|_q<\infty $ [By H\"{o}lder's inequality]. Thus $ T_h $ is well-defined. 
	
	Let $\mathcal{E} $ be an open connected set in $ L^p(\R^n) $. Then for any $ f,g\in \mathcal{E} $, $ |T_h(f)-T_h(g)|=\displaystyle\left|\int_{\R}f(t)h(t)dt-\int_{\R}g(t)h(t)dt\right|\leq\int_{\R}|f(t)-g(t)||h(t)|dt\leq\|f-g\|_p\|h\|_q\leq\|h\|_q\cdot\text{diam}(\mathcal{E})\leq\|h\|_q\cdot\mu(\mathcal{E}) $. Thus $ T_h $ is a locally Lipschitz function on $ L^p(\R^n) $ and hence is absolutely continuous on $ L^p(\R^n) $ [By Theorem \ref{t:lac}].\qed
\end{Ex}

We can generalise this example to some extent.

\begin{Ex}
	Let $ G $ be a second countable locally compact abelian Hausdorff topological group. Then $ L^p(G)\ (p>1) $ is a separable normed linear space and hence $ \big(L^p(G),\|.\|_p,\mathscr{A},\mu\big) $ is a tms, $ \mathscr{A},\mu $ being defined as in Theorem \ref{t:mtms}. Now for each $ h\,(\neq0)\in L^q(G) $ \big(where $ q>1 $ such that $ \frac{1}{p}+\frac{1}{q}=1 $\big), the map $ T_h(f):=\displaystyle\int_{G}f(t)h(t)dt,\ \forall\,f\in L^p(G) $ is locally Lipschitz and hence absolutely continuous on $ L^p(G) $ [Here `$ dt $' denotes the Haar measure on $ G $].
\end{Ex}

\begin{Th}
	Let $ X $ be a separable normed linear space. Then every bounded linear functional on $ X $ is Locally Lipschitz and hence absolutely continuous on $ X $.
\label{t:nls}\end{Th}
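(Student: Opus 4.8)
The plan is to prove the stronger ``locally Lipschitz'' assertion directly and then read off absolute continuity for free from Theorem \ref{t:lac}. This is the right reduction because, by Remark \ref{rm:nlstms}, the separable normed linear space $X$ is a tms $\big(X,\|\cdot\|,\mathscr{A},\mu\big)$ with $\mathscr{A},\mu$ as manufactured in Theorems \ref{t:car}, \ref{t:outer} (every open ball being convex, hence connected, so Theorem \ref{t:mtms} applies). So the whole content is the local Lipschitz estimate.

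First I would fix a bounded linear functional $\varphi:X\to\K$; since $\varphi$ is bounded there is $M>0$ with $|\varphi(z)|\le M\,\|z\|$ for every $z\in X$, and I claim $\varphi$ is locally Lipschitz with Lipschitz constant $L:=M$. The one geometric ingredient needed is the comparison $\|x-y\|\le\mu(E)$ whenever $E$ is an open connected subset of $X$ and $x,y\in E$. This is precisely the estimate $d(x,y)\le\mu(V)$ already invoked in the proof of Theorem \ref{t:mtms}: one has $\|x-y\|\le\text{diam}(E)\le\mu(E)$, the first inequality being the definition of the diameter and the second the inequality $\text{diam}(C)\le\mu(C)$ for connected $C$ used in Example \ref{e:ac}(v). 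If a short justification of the latter is wanted, it comes straight from the definition of $\nu$ in Theorem \ref{t:outer}: given any open cover $\{B_n\}_{n\in\N}$ of the connected set $E$, the $1$-Lipschitz map $t\mapsto\|t-x\|$ carries it to an open cover of $\big[0,\|x-y\|\big]$ by sets $f(B_n)$ with $\text{diam}\,f(B_n)\le\text{diam}(B_n)$, so $\sum_n\text{diam}(B_n)\ge\|x-y\|$; taking the infimum gives $\nu(E)\ge\|x-y\|$, and letting $x,y$ range over $E$ gives $\nu(E)\ge\text{diam}(E)$.

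Putting these together, for any open connected $E\subseteq X$ and any $x,y\in E$,
\[
|\varphi(x)-\varphi(y)|=|\varphi(x-y)|\le M\,\|x-y\|\le M\,\mu(E)=L\,\mu(E),
\]
so $\varphi$ is locally Lipschitz on $X$, and Theorem \ref{t:lac} then delivers that $\varphi$ is absolutely continuous on $X$.

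I do not expect a serious obstacle here: apart from recalling that $X$ qualifies as a tms, the argument is just linearity of $\varphi$ together with the norm bound. The only step that is not pure bookkeeping is the comparison $\|x-y\|\le\mu(E)$ for open connected $E$, and that is exactly the place where the Carath\'eodory-type construction of $\mu$ from diameters is used; it is already part of the groundwork of Section 3, so invoking it costs nothing.
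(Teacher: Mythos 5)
Your proposal is correct and follows essentially the same route as the paper: invoke Remark \ref{rm:nlstms} to view $X$ as a tms, use $|\varphi(x)-\varphi(y)|\le M\|x-y\|\le M\,\mathrm{diam}(E)\le M\,\mu(E)$ on open connected $E$ to get the locally Lipschitz property, and finish with Theorem \ref{t:lac}. Your extra justification of $\mathrm{diam}(E)\le\mu(E)$ is a welcome addition the paper leaves implicit (and your choice of a strictly positive bound $M$ quietly covers the zero functional, which the paper treats as a separate case).
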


\begin{proof}
	By Remark \ref{rm:nlstms}, $ X $ is a tms, the $ \sigma $-algebra $ \mathscr{A}$ and the measure $ \mu $ are as described in Theorem \ref{t:mtms}. Let $ f $ be a nonzero bounded linear functional on $ X $. Let $ C $ be an open connected set in $ X $. Then for any $ x,y\in C $, $ |f(x)-f(y)|=|f(x-y)|\leq\|f\|.\|x-y\|\leq\|f\|.\text{diam}(C)\leq\|f\|.\mu(C) $. Thus $ f $ is a locally Lipschitz function (since $ \|f\|>0 $) and hence is absolutely continuous on $ X $. Clearly the zero function is locally Lipschitz as well as absolutely continuous.
\end{proof}

\begin{Cor}
	In a separable normed linear space $ X $, a linear functional $ f $ on $ X $ is absolutely continuous if and only if $ f $ is bounded.
\end{Cor}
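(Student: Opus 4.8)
The forward implication is precisely the content of Theorem~\ref{t:nls}: if $f$ is a bounded linear functional on $X$, then—equipping $X$ with the $\sigma$-algebra $\mathscr{A}$ and measure $\mu$ of Theorem~\ref{t:mtms}, which make $X$ a tms by Remark~\ref{rm:nlstms}—$f$ is locally Lipschitz, hence absolutely continuous by Theorem~\ref{t:lac}. So the plan is to establish the converse: an absolutely continuous linear functional on $X$ must be bounded.

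The strategy for the converse is to route through continuity. By Theorem~\ref{t:absunif}, every absolutely continuous function on a tms is uniformly continuous with respect to the canonical uniformity $\U$ of the tms, and in particular continuous with respect to the topology that this uniformity generates. For the tms of Theorem~\ref{t:mtms} the relevant topology is the norm topology of $X$: the $\sigma$-algebra contains all open sets, and axiom (iii) of Definition~\ref{d:tms} forces the uniform topology to coincide with $\ta$ exactly as in Theorem~\ref{t:unif}. Hence an absolutely continuous linear functional $f$ on $X$ is continuous in the norm topology (this last passage from uniform continuity to ordinary continuity is recorded in Remark~\ref{rm:cont}).

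It then remains to invoke the elementary fact that a continuous linear functional on a normed linear space is bounded. Concretely, continuity of $f$ at $0$ together with $f(0)=0$ yields, for $\varepsilon=1$, a radius $r>0$ with $|f(x)|<1$ whenever $\|x\|<r$; for arbitrary $x\neq 0$ the vector $\tfrac{r}{2\|x\|}\,x$ lies in this ball, so $|f(x)|\leq \tfrac{2}{r}\|x\|$, i.e. $\|f\|\leq \tfrac{2}{r}<\infty$. Combining this with the forward direction gives the stated equivalence.

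I do not expect a genuine obstacle here; the corollary is short once Theorems~\ref{t:nls} and~\ref{t:absunif} are in place. The only point deserving a word of care is the identification of the tms topology on $X$ with its norm topology, which is already guaranteed by the construction underlying Theorem~\ref{t:mtms}, so that ``continuous with respect to the tms'' and ``continuous with respect to the norm'' mean the same thing for the functional $f$.
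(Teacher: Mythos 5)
Your proposal is correct and follows essentially the same route as the paper: the forward direction is Theorem \ref{t:nls}, and the converse uses the fact that absolute continuity implies continuity (Remark \ref{rm:cont}) together with the standard equivalence of continuity and boundedness for linear functionals. The paper's proof is just a one-line version of your argument, leaving the topology identification and the continuity-implies-boundedness step implicit.
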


\begin{proof}
	Since every absolutely continuous function is necessarily continuous [by Remark \ref{rm:cont}], this corollary follows from above Theorem \ref{t:nls}.
\end{proof}

\begin{Rem}
	If $ f $ is a bounded linear functional on a separable nomed linear space $ X $, then for any constant $ c $, the map $ f+c $ is absolutely continuous [by Remark \ref{rm:cont}].
\label{rm:bac}\end{Rem}

\begin{Ex}
	Consider the tms $ \big(\R^2,\|.\|_2,\mathscr{A},\mu\big) $ which is a normed linear space as well \big[Here $ \mu $ is constructed as in Theorem \ref{t:mtms} using the metric induced by the norm $ \|.\|_2 $\big]. By above Remark \ref{rm:bac}, for any real constants $ a,b,c $, the map $ f(x,y):=ax+by+c,\ \forall\,(x,y)\in\R^2 $ is absolutely continuous, since $ (x,y)\mapsto ax+by $ is a bounded linear functional on $ (\R^2,\|.\|_2) $.
	
	Here it is important to note that $ f $ will not be absolutely continuous if instead of the measure $ \mu $ the Lebesgue measure $ \lambda $ is considered, as justified in Example \ref{e:ac} (vi).\qed
\end{Ex}

\begin{Ex}
	Consider the separable Banach space $ \big(C[0,1],\|.\|_{\infty}\big) $ as tms, the $ \sigma $-algebra $ \mathscr{A} $ and measure $ \mu $ being defined as in Theorem \ref{t:mtms}. Define a map $ T:C[0,1]\to C[0,1] $ by $ (Tf)(x):=\displaystyle\int_0^xf(t)dt,\ \forall\,x\in[0,1],\forall\,f\in C[0,1] $. Since the integral is continuous on [0,1], $ T $ is well-defined. Clearly, $ T $ is linear.
	
	Let $ \mathcal E $ be an open connected set in $ C[0,1] $. Now for any $ f,g\in\mathcal E $, we have $ \|Tf-Tg\|_{\infty}=\|T(f-g)\|_{\infty}=\displaystyle\sup_{x\in[0,1]}\left|\int_0^x\big(f(t)-g(t)\big)dt\right|\leq\sup_{x\in[0,1]}\int_0^x\big|f(t)-g(t)\big|dt=\int_0^1\big|f(t)-g(t)\big|dt\leq\|f-g\|_{\infty}\leq\text{diam}(\mathcal E)\leq\mu(\mathcal E) $. 
	
	Let $ \varepsilon>0 $ be arbitrary and $ \{\mathcal E_i\}_i\in\mathcal P_{\varepsilon} $. Then\\ $ \displaystyle\sum_i\omega(T,\mathcal E_i)=\sum_i\sup\big\{\|Tf-Tg\|_{\infty}:f,g\in\mathcal E_i\big\}\leq\sum_i\mu(\mathcal E_i)<\varepsilon $.
	
	Thus $ T:C[0,1]\to C[0,1] $ is absolutely continuous according to the Definition \ref{d:nlsac}.
\end{Ex}

\begin{Th}
	Let $ X,N $ be two normed linear spaces over the field $ \K $ and $ X $ be separable. Then a linear transformation $ T:X\to N $ is absolutely continuous according to Definition \ref{d:nlsac} if and only if $ T $ is bounded.\label{t:abscontbddtran}
\end{Th}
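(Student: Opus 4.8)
The plan is to prove the two implications separately. Throughout, recall that since $X$ is separable, Remark~\ref{rm:nlstms} equips $X$ with the tms structure $(X,\|\cdot\|,\mathscr{A},\mu)$ where $\mathscr{A},\mu$ are those of Theorem~\ref{t:mtms}; the feature of $\mu$ I will use is that for every open connected set $C\subseteq X$ one has $\text{diam}(C)\leq\mu(C)$ (any open cover of a connected set has diameters summing to at least $\text{diam}(C)$, via a finite chaining argument, and then one takes the infimum over covers), while every open ball $B(c,\rho)$ is open connected with $\mu(B(c,\rho))=\text{diam}(B(c,\rho))=2\rho$. In particular the pseudometric $d$ of Theorem~\ref{t:pseudo} reduces here to $d(x,y)=\|x-y\|$, so the uniformity of Theorem~\ref{t:unif} on $X$ is just the norm uniformity.

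For the ``if'' part I would imitate Theorem~\ref{t:nls}. Assuming $T$ bounded and nonzero, for an open connected set $C$ and $x,y\in C$ we have $\|Tx-Ty\|=\|T(x-y)\|\leq\|T\|\cdot\|x-y\|\leq\|T\|\cdot\text{diam}(C)\leq\|T\|\cdot\mu(C)$, hence $\omega(T,C)\leq\|T\|\cdot\mu(C)$. Given $\varepsilon>0$, set $\delta=\varepsilon/\|T\|$; then for every $\{D_i\}_i\in\pt{}{\delta}$ one gets $\sum_i\omega(T,D_i)\leq\|T\|\sum_i\mu(D_i)<\|T\|\delta=\varepsilon$, which is precisely absolute continuity in the sense of Definition~\ref{d:nlsac} (the $N$-valued counterpart of the locally Lipschitz $\Rightarrow$ absolutely continuous implication of Theorem~\ref{t:lac}); the case $T=0$ is trivial.

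For the ``only if'' part, assume $T$ is absolutely continuous. Taking $\varepsilon=1$ in Definition~\ref{d:nlsac} yields $\delta>0$ such that $\sum_i\omega(T,D_i)<1$ for every $\{D_i\}_i\in\pt{}{\delta}$; specialising to one-element families gives $\omega(T,V)<1$ for every open connected $V$ with $\mu(V)<\delta$. Now fix $x\in X$ with $0<\|x\|<\delta/2$, pick $r>0$ with $\|x\|+2r<\delta$, and consider the open ball $V:=B\big(\tfrac{x}{2},\tfrac{\|x\|}{2}+r\big)$: it is open connected, contains both $0$ and $x$, and $\mu(V)=\|x\|+2r<\delta$, so $\|Tx\|=\|Tx-T0\|\leq\omega(T,V)<1$. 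Thus $\|Tx\|<1$ whenever $\|x\|<\delta/2$, and applying this to $(\delta/4)u$ for unit vectors $u$ and using linearity gives $\|T\|\leq 4/\delta<\infty$. (Alternatively one notes that the estimate just displayed says $T$ is continuous at $0$, hence bounded by linearity; this continuity is also what one obtains from the $N$-valued analogue of Theorem~\ref{t:absunif}, which holds by the same proof, together with Remark~\ref{rm:cont}.)

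I expect the routine half to be the ``if'' direction. The real content lies in the converse: the observation that absolute continuity, restricted to singleton families, already produces a uniform oscillation bound on all small open connected sets, combined with the fact that in this specific tms an arbitrarily small open connected neighbourhood containing two prescribed nearby points can be taken to be a ball whose measure equals its diameter. Converting the resulting bound $\omega(T,V)<1$ into the operator-norm bound $\|T\|\leq 4/\delta$ is then immediate from linearity.
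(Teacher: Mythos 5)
Your proposal is correct. The ``if'' direction is essentially the paper's own argument (the paper uses $\delta=\varepsilon/(\|T\|+1)$ to avoid treating $T=0$ separately, but the estimate $\omega(T,C)\leq\|T\|\cdot\mu(C)$ via $\|x-y\|\leq\text{diam}(C)\leq\mu(C)$ is identical). For the converse the paper is one line: it invokes the (vector-valued analogue of the) fact that absolutely continuous implies continuous, and then implicitly uses that a continuous linear map is bounded --- exactly the alternative you mention parenthetically. Your primary converse argument is a more self-contained, quantitative version of this: by specialising Definition~\ref{d:nlsac} to singleton families $\{V\}\in\pt{}{\delta}$ and exploiting that in this particular tms an open ball $B(c,\rho)$ is open connected with $\mu(B(c,\rho))=2\rho$, you extract the explicit bound $\|T\|\leq 4/\delta$. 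That buys an effective operator-norm estimate in terms of the $\delta$ witnessing absolute continuity for $\varepsilon=1$, at the cost of a construction specific to normed spaces, whereas the paper's route is shorter and reuses the general machinery of Theorem~\ref{t:absunif} and Remark~\ref{rm:cont}. Both are sound; your chaining justification of $\text{diam}(C)\leq\mu(C)$ is also the (unstated) fact the paper relies on in Theorems~\ref{t:mtms} and~\ref{t:nls}.
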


\begin{proof}
	$ X $ can be considered as a tms, $ \mu $ being the measure as explained in Theorem \ref{t:mtms}. If $ T:X\to N $ is a bounded linear transformation then for any open connected set $ C $ in $ X $, we have $ \|Tx-Ty\|=\|T(x-y)\|\leq\|T\|.\|x-y\|\leq\|T\|.\text{diam}(C)\leq\|T\|.\mu(C)$, $\forall\,x,y\in C $. Thus $ \omega(T,C)=\sup\big\{\|Tx-Ty\|:x,y\in C\big\}\leq\|T\|.\mu(C) $.
	
	Let $ \varepsilon>0 $ be arbitrary and $ \delta=\frac{\varepsilon}{\|T\|+1} $. Choose any $ \{E_i\}_i\in\mathcal{P}_{\delta} $. Then $ \displaystyle\sum_i\omega(T, E_i)\leq\|T\|.\sum_i\mu(E_i)<\|T\|.\delta<\varepsilon $. Thus by Definition \ref{d:nlsac}, $ T $ is absolutely continuous.
	
	Converse is immediate, since every absolutely continuous function (even vector-valued) is necessarily continuous.
\end{proof}

\begin{Rem}
	If $ X,N $ are two normed linear spaces with $ X $ separable, then for any bounded linear transformation $ T:X\to N $ and any $ v\in N $ the map $ T+v $ is absolutely continuous according to the Definition \ref{d:nlsac}.
\end{Rem}

\begin{Th}
	On a separable normed linear space $X$, the binary operation addition is an absolutely continuous function on $ X\times X $ according to Definition \ref{d:nlsac}.
\end{Th}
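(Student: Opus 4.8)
The plan is to recognise addition as a bounded linear transformation out of the separable normed linear space $X\times X$ and then invoke Theorem \ref{t:abscontbddtran}.

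First I would fix a product norm on $X\times X$, say $\|(x,y)\|:=\|x\|+\|y\|$; any of the standard equivalent product norms serves equally well, but making a definite choice matters, since the measure $\mu$ of Theorem \ref{t:mtms} is manufactured from the metric. With this norm $X\times X$ is a normed linear space, and it is separable: if $D\subseteq X$ is countable and dense, then $D\times D$ is countable and dense in $X\times X$. Hence, by Remark \ref{rm:nlstms}, $\big(X\times X,\|\cdot\|,\mathscr{A},\mu\big)$ is a tms, where $\mathscr{A}$ and $\mu$ are constructed as in Theorem \ref{t:mtms} from the metric induced by this product norm.

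Next I would observe that the addition map $s:X\times X\to X$, $s(x,y):=x+y$, is linear and bounded, because $\|s(x,y)\|=\|x+y\|\leq\|x\|+\|y\|=\|(x,y)\|$, so $\|s\|\leq 1$. Applying Theorem \ref{t:abscontbddtran} with separable domain $X\times X$ and codomain $X$ then yields at once that $s$ is absolutely continuous in the sense of Definition \ref{d:nlsac}. If one prefers an explicit argument instead of citing that theorem, it suffices to reproduce its estimate directly: for an open connected $C\subseteq X\times X$ and $(x,y),(x',y')\in C$ one has $\|s(x,y)-s(x',y')\|=\|(x-x')+(y-y')\|\leq\|(x,y)-(x',y')\|\leq\text{diam}(C)\leq\mu(C)$, so $\omega(s,C)\leq\mu(C)$; given $\varepsilon>0$, taking $\delta=\varepsilon$ and any $\{E_i\}_i\in\pt{}{\delta}$ gives $\sum_i\omega(s,E_i)\leq\sum_i\mu(E_i)<\varepsilon$.

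The only points requiring a word of care are the selection of an (equivalent) product norm on $X\times X$ and the observation that separability is inherited by the product; once these are in place there is no genuine obstacle, and the statement is essentially a corollary of Theorem \ref{t:abscontbddtran} (addition being a norm-one operator).
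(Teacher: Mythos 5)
Your proposal is correct and follows essentially the same route as the paper: equip $X\times X$ with a product norm, note it is separable and hence a tms via Theorem \ref{t:mtms} and Remark \ref{rm:nlstms}, observe addition is a bounded linear transformation, and invoke Theorem \ref{t:abscontbddtran}. The only difference is that the paper uses the max norm (getting $\|A\|\leq 2$) where you use the sum norm (getting $\|s\|\leq 1$), which is immaterial to the argument.
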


\begin{proof}
	Since finite product of two separable topological spaces is separable, $X\times X$ is a separable normed linear space, where $\|(x,y)\|:=\text{max}\{\|x\|,\|y\|\}$ for all $x,y\in X.$ Now, we consider the $\sigma$-algebra $\mathscr{A}$ and measure $\mu$ on $X\times X$ as described in Theorem \ref{t:mtms}. Hence by Remark \ref{rm:nlstms}, $X\times X$ is a tms. Let us define the function $A:X\times X\to X$ by $A(x,y):=x+y$, for all $(x,y)\in X\times X$. Then obviously $A$ is a linear transformation. Now $\|A(x,y)\|=\|x+y\|\leq\|x\|+\|y\|\leq 2\cdot\max\{\|x\|,\|y\|\}=2\|(x,y)\|$, for all $(x,y)\in X\times X$. Hence $A$ is a bounded linear transformation on $X\times X$. Consequently, by Theorem \ref{t:abscontbddtran} we can conclude that $A$ is an absolutely continuous function on $X\times X$.
\end{proof}

\begin{Th}
	On a separable normed linear space $X$, for each $\alpha\in\K$ define a function $\zeta_\alpha(x):=\alpha\cdot x$, for all $x\in X$. Then $\zeta_\alpha$ is absolutely continuous on $X$, for all $\alpha\in\K.$ 
\end{Th}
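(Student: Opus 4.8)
The plan is to observe that $\zeta_\alpha$ is nothing but a bounded linear transformation from $X$ into itself, and then to quote Theorem \ref{t:abscontbddtran}. First I would record the structural facts: since $X$ is a separable (equivalently, second countable) normed linear space, Remark \ref{rm:nlstms} makes $X$ into a tms, where the $\sigma$-algebra $\mathscr{A}$ and the measure $\mu$ are those constructed in Theorem \ref{t:mtms} from the metric induced by the norm. This is the framework in which Definition \ref{d:nlsac} applies.

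Next, fix $\alpha\in\K$. The map $\zeta_\alpha:X\to X$, $\zeta_\alpha(x)=\alpha\cdot x$, is evidently linear in $x$. It is bounded because $\|\zeta_\alpha(x)\|=\|\alpha\cdot x\|=|\alpha|\,\|x\|$ for every $x\in X$, so $\zeta_\alpha$ is a bounded linear transformation with $\|\zeta_\alpha\|=|\alpha|<\infty$. (The degenerate case $\alpha=0$ is included here: $\zeta_0$ is the zero transformation, which is bounded and linear; alternatively one may simply note that a constant function is trivially absolutely continuous.)

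Finally, I would apply Theorem \ref{t:abscontbddtran} with $N:=X$. Since $X$ is separable and $\zeta_\alpha:X\to X$ is a bounded linear transformation, that theorem yields directly that $\zeta_\alpha$ is absolutely continuous according to Definition \ref{d:nlsac}, and this holds for every $\alpha\in\K$.

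There is essentially no obstacle in this argument; the only points requiring care are bookkeeping ones, namely that Theorem \ref{t:abscontbddtran} genuinely allows the codomain to be an arbitrary normed linear space (so the choice $N=X$ is legitimate) and that the separability hypothesis needed there is exactly the standing assumption on $X$.
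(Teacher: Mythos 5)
Your proposal is correct and follows exactly the route the paper takes: identify $\zeta_\alpha$ as a bounded linear transformation (with $\|\zeta_\alpha\|=|\alpha|$) and invoke Theorem \ref{t:abscontbddtran} with $N=X$. The extra bookkeeping you supply (the tms structure from Remark \ref{rm:nlstms} and the $\alpha=0$ case) is consistent with, and slightly more explicit than, the paper's one-line argument.
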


\begin{proof} This theorem follows from Theorem \ref{t:abscontbddtran} since $\zeta_\alpha$ is a bounded linear transformation on $X$, for each $\alpha\in\K$.
\end{proof}

\begin{Th}
	On a separable normed linear space $X$, the norm function is absolutely continuous on $ X $.
\end{Th}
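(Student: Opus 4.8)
The plan is to exhibit the norm as a locally Lipschitz function on the tms $(X,\|\cdot\|,\mathscr A,\mu)$ — here $X$ is a tms with the $\sigma$-algebra $\mathscr A$ and the measure $\mu$ of Theorem \ref{t:mtms}, by Remark \ref{rm:nlstms} — and then to invoke Theorem \ref{t:lac}, which turns local Lipschitzness into absolute continuity. Write $\eta:X\to\R$ for the norm, $\eta(x):=\|x\|$, and regard its codomain $\R$ inside $\K$ so that Definition \ref{d:ac} applies.

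First I would record the only analytic input, the reverse triangle inequality: $\bigl|\eta(x)-\eta(y)\bigr|=\bigl|\,\|x\|-\|y\|\,\bigr|\le\|x-y\|$ for all $x,y\in X$. This is the estimate that plays, for the (nonlinear) map $\eta$, exactly the role boundedness played for a linear functional in Theorem \ref{t:nls}. Next, let $C$ be any open connected set in $X$; for $x,y\in C$ we have $\|x-y\|\le\text{diam}(C)\le\mu(C)$, the last inequality being the basic feature of $\mu$ as built from diameters of open covers in Theorem \ref{t:outer} (and already used in Examples \ref{e:ac}(v),(vii) and in Theorem \ref{t:nls}). Combining the two displays gives $\bigl|\eta(x)-\eta(y)\bigr|\le\mu(C)$ for all $x,y\in C$, hence $\omega(\eta,C)=\sup\{|\eta(x)-\eta(y)|:x,y\in C\}\le\mu(C)$, i.e.\ $\eta$ is locally Lipschitz with Lipschitz constant $L=1$.

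Finally, Theorem \ref{t:lac} yields that $\eta$ is absolutely continuous on $X$. I do not expect a genuine obstacle here: the only point that needs care is that the norm is not linear, so Theorem \ref{t:nls} cannot be quoted verbatim; the reverse triangle inequality supplies the required Lipschitz-type bound $\omega(\eta,C)\le\mu(C)$, after which the argument is identical to the one used for bounded linear functionals in Theorem \ref{t:nls} followed by Theorem \ref{t:lac}.
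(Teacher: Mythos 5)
Your proof is correct and follows essentially the same route as the paper: both rest on the estimate $\omega(\|\cdot\|,E)\le\sup\{\|x-y\|:x,y\in E\}=\mathrm{diam}(E)\le\mu(E)$ from the reverse triangle inequality. The only cosmetic difference is that you conclude via local Lipschitzness and Theorem \ref{t:lac}, whereas the paper simply sums the inequality over a family in $\pt{}{\varepsilon}$ directly.
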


\begin{proof} By Remark \ref{rm:nlstms}, $ X $ is a tms, the $ \sigma $-algebra $ \mathscr{A}$ and the measure $ \mu $ are as described in Theorem \ref{t:mtms}. For any open connected set $ E $ in $ X $, we have $\omega(\|.\|,E)=\sup\big\{\big|\|x\|-\|y\|\big|:x,y\in E\big\}\leq\sup\big\{\|x-y\|:x,y\in E\big\}=\text{diam}(E)\leq\mu(E)$. So for any $\varepsilon>0$ and any  $\{E_i\}_{i=1}^\infty\in\pt{}{\varepsilon}$ we have $\displaystyle\sum_{i=1}^{\infty}\omega(\|.\|,E_i)\leq\sum_{i=1}^{\infty}\mu(E_i)<\varepsilon$. Therefore, the norm function is absolutely continuous on $X$.
\end{proof}

\begin{Th}
	Let $ X $ be a tms, $ N,M $ be two normed linear spaces over $ \K $. For any absolutely continuous function $ f:X\to N $ and any bounded linear transformation $ T:N\to M $, the map $ T\circ f:X\to M $ is absolutely continuous.
\end{Th}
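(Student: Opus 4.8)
The plan is to reduce the whole statement to the single pointwise inequality
$\omega(T\circ f,D)\le\|T\|\cdot\omega(f,D)$, valid for every open connected set $D\subseteq X$, and then feed this estimate directly into the definition of absolute continuity of $f$ (Definition \ref{d:nlsac}). Since $T\circ f:X\to M$ takes values in a normed linear space, its absolute continuity is also governed by Definition \ref{d:nlsac}.

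First I would fix an open connected set $D\subseteq X$ and bound its oscillation under $T\circ f$. For any $x,y\in D$, linearity of $T$ gives $T(f(x))-T(f(y))=T\big(f(x)-f(y)\big)$, and boundedness of $T$ yields $\|T(f(x))-T(f(y))\|\le\|T\|\cdot\|f(x)-f(y)\|$. Taking the supremum over all $x,y\in D$ produces $\omega(T\circ f,D)\le\|T\|\cdot\omega(f,D)$. Next, given $\varepsilon>0$, I would apply absolute continuity of $f$ with tolerance $\tfrac{\varepsilon}{\|T\|+1}$: there is $\delta>0$ such that $\sum_i\omega(f,D_i)<\tfrac{\varepsilon}{\|T\|+1}$ for every $\{D_i\}_i\in\pt{}{\delta}$. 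Summing the pointwise estimate over such a family gives $\sum_i\omega(T\circ f,D_i)\le\|T\|\sum_i\omega(f,D_i)<\|T\|\cdot\tfrac{\varepsilon}{\|T\|+1}<\varepsilon$, which is precisely the condition for $T\circ f$ to be absolutely continuous. (Dividing by $\|T\|+1$ rather than $\|T\|$ sidesteps the degenerate case $T=0$, which is in any event trivial since then $T\circ f$ is the constant map $0$, hence absolutely continuous.)

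I do not anticipate any real obstacle: the only ingredients are linearity and boundedness of $T$ and the already-available notion of absolute continuity for normed-linear-space-valued functions, so the argument is the evident analogue of Theorem \ref{t:ac}(ii). The only point requiring minimal care is the bookkeeping — verifying that the chosen $\delta$ depends only on $\varepsilon$ and $\|T\|$ and not on the particular family $\{D_i\}_i$ — which is immediate from the uniform estimate $\omega(T\circ f,D)\le\|T\|\cdot\omega(f,D)$.
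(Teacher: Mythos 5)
Your argument is correct and follows exactly the paper's route: the key inequality $\omega(T\circ f,E)\le\|T\|\cdot\omega(f,E)$ obtained from linearity and boundedness of $T$, followed by the standard $\varepsilon$--$\delta$ bookkeeping that the paper leaves implicit. Your use of $\|T\|+1$ to handle the degenerate case $T=0$ is a minor but sensible refinement.
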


\begin{proof}
	For any open connected set $ E $ in $ X $ we have for any $ x,y\in E $, $ |(T\circ f)(x)-(T\circ f)(y)|=|T(f(x)-f(y))|\leq\|T\|\|f(x)-f(y)\| $ and hence $ \omega(T\circ f,E)\leq\|T\|.\omega(f,E) $. This justifies our theorem.
\end{proof}

\noindent\textbf{\underline{Acknowledgement} :} The first author is thankful to UGC, India for financial assistance.


\begin{thebibliography}{99}

\bibitem{Maly} Jan Mal\'{y}, {\em Absolutely Continuous Functions of Several Variables}, Journal of Mathematical Analysis and Applications \textbf{231}, 492-508 (1999).

\bibitem{Di} Dietmar A. Salamon, {\em Measure and integration},
European Mathematical Society, 2016. 


\end{thebibliography}
\end{document}